\documentclass[a4paper,12pt]{article}

\usepackage{natbib,enumerate}
\usepackage{amsmath}
\usepackage{amsthm}
\usepackage{amssymb}
\usepackage{hyperref}
\usepackage{amsfonts}
\usepackage{amscd}
\usepackage[utf8]{inputenc}

\usepackage{verbatim}
\usepackage{tikz}

\newtheorem{theorem}{Theorem}

\newtheorem{definition}[theorem]{Definition}

\newtheorem{lemma}[theorem]{Lemma}

\newtheorem{proposition}[theorem]{Proposition}
\theoremstyle{definition}

\theoremstyle{remark}
\newtheorem{example}[theorem]{Example}
\newtheorem{remark}[theorem]{Remark}

\newcommand* \argmin {\mathop{\rm argmin}\nolimits }

\newcommand* \ucore {\mathop{\ \rm \mathbf{u}\text{-core}}\nolimits}
\newcommand* \core {\mathop{\rm core}\nolimits}

\title{TU-games with utilities: the prenucleolus and its characterization set\thanks{The authors thank for the comments of the anonymous referees, the participants of SING 2021, VOCAL 2022, EURO 2022, EUROpt 2023, among others. This research was supported by the Hungarian Scientific Research Fund (K 146649), and by the Pro Progressio Foundation.}}

\author{Zsófia Dornai\thanks{Department of Analysis and Operations Research, Institute of Mathematics, Budapest University of Technology and Economics, Műegyetem rkp. 3., H-1111 Budapest, Hungary,  dornaizs@math.bme.hu} and Miklós Pintér\thanks{Corresponding author: Corvinus Center for Operations Research, Corvinus Institute of Advanced Studies, Corvinus University of Budapest, pmiklos@protonmail.com.}}

\begin{document}

\maketitle

\begin{abstract}
TU-games with utility functions are considered. Generalizations of the prenucleolus, essential coalitions and the core: the $\mathbf{u}$-prenucleo\-lus, $\mathbf{u}$-essential coalitions and the $\mathbf{u}$-core respectively are introduced. We show that $\mathbf{u}$-essential coalitions form a characterisation set for the $\mathbf{u}$-prenucleolus in case of  games with nonempty $\mathbf{u}$-core.

\bigskip

Keywords: TU-games, restricted cooperation, prenucleolus, core, essential coalitions, TU-games with utility
\end{abstract}


\section{Introduction}

The (pre)nucleolus \citep{Schmeidler1969} is a widely used solution concept of transferable utility cooperative games. Its practical importance lies in the fact that its aim is to minimize the dissatisfaction of the most dissatisfied coailition, which makes it unequivocally interesting for social, financial and technical usages as well. A considerable evidence of its value is that it has numerous variants and generalizations, such as the percapita prenucleolus \citep{Grotte1970,Grotte1972}, the weighted prenucleolus \citep{DerksHaller1999}, the modified nucleolus \citep{Sudholter1996,Sudholter1997} and the general nucleolus \citep{PottersTijs1992,MaschlerPottersTijs1992}. From the viewpoint of this research the most abstract generalization is the one by \citet{PottersTijs1992} and \citet{MaschlerPottersTijs1992} called the general nucleolus. The last two papers focus on the characterization of the general nucleolus, while also providing a generalization of the lexicographic center algorithm \citep{MaschlerPelegShapley1979}, and Kohlberg's theorem \citep{Kohlberg1971}. 

In this paper, we consider the restricted cooperation case, and in connection with these games we introduce a special case of the general nucleolus, called the $\mathbf{u}$-prenucleolus. Using this less general setting, we can also generalize the results by \citeauthor{Katsev2013} to the $\mathbf{u}$-prenucleolus; namely, we give sufficient and necessary conditions for the $\mathbf{u}$-prenucleolus to be nonempty and to be single-valued.


Moreover, the $\mathbf{u}$-prenucleolus is a common generalization of some well known variants of the prenucleolus such as the percapita prenucleolus \citep{Grotte1970,Grotte1972}, the $q$-prenucleolus \citep{Solymosi2019} and the weighted prenucleolus \citep{DerksHaller1999} among others. 

All of these generalizations are common in using a modified version of the excesses. In our case, we define a $\mathbf{u}$ function on the excesses. Using this $\mathbf{u}$ function, we also define a generalization of balanced games and the core \citep{Shapley1955,Gillies1959}: the $\mathbf{u}$-balanced games and the $\mathbf{u}$-core respectively. Furthermore, we show that a game is $\mathbf{u}$-balanced if and only if its $\mathbf{u}$-core is nonempty, that is, among others, we generalize the Bondareva--Shapley theorem \citep{Bondareva1963,Shapley1967,Faigle1989}. 

Another variant of the prenucleolus, which is worth mentioning, because of its similarity to the $\mathbf{u}$-prenucleolus, is the modified nucleolus \citep{Sudholter1996,Sudholter1997}. It also applies the idea of defining a modified excess vector and finding the payoff vectors, which lexicographically minimize it. However, this modification cannot be described by using an above-mentioned utility function, so it is not a special case of our approach, the $\mathbf{u}$-prenucleolus.

Computing the prenucleolus or its variants and generalizations is time-consuming. Although it has been known that $2n -2$ coalitions are enough to characterize the prenucleolus of an $n$ player game  \citep{Brune1983,ReijniersePotters1998} -- such a set of coalitions is called a characterization set --, it is not easy to find these $2n-2$ coalitions.

There are several algorithms to compute the (pre)nucleolus \citep{Kohlberg1972, Solymosi1993, PereaPuerto2013}, however these algorithms have a complexity of $\mathcal{O} (2^n)$ in general. On the other hand, for some classes of games like neighbour games \citep{Hamers2003}, permutation games under certain conditions \citep{Solymosi2005}, tree games \citep{Maschler2010}, a large class of directed acyclic graph games \citep{Sziklai2017} the nucleolus can be computed in polynomial time in the number of players. In addition, some heuristics provide efficient algorithms for us to find an allocation close to the nucleolus \citep{PereaPuerto2019}. 

In this paper, we consider the lexicographic center algorithm \citep{MaschlerPelegShapley1979}, which calculates the nucleolus using $\mathcal{O} (n)$ LPs with $\mathcal{O} (n)$ variables and $\mathcal{O} (2^n)$ constraints. The number of constraints can be reduced by finding a smaller characterization set for the nucleolus than the one including all the feasible coalitions. 


\citet{Huberman1980} showed that the so-called essential coalitions give a characterization set for the nucleolus of balanced games. In certain classes of games (e.g. matching-games) the cardinality of essential coalitions is polynomial in the number of players and they are also easy to find, thereby providing a way of computing the nucleolus in polynomial time. On the other hand, Huberman's theorem, typically, cannot be applied to the variants and generalizations of the prenucleolus, neither can it be used for non-balanced games.

In this paper, we define the $\mathbf{u}$-essential coalitions as a generalization of essential coalitions and prove that the $\mathbf{u}$-essential coalitions give a characterization set for the $\mathbf{u}$-prenucleolus of $\mathbf{u}$-balanced games. Choosing our $\mathbf{u}$ function accordingly, this generalization of Huberman's theorem can be applied to give a characterization set for the prenuclolus of non-balanced games or to give a characterization set for the percapita prenucleolus of balanced games, among others; thereby solving the above-mentioned problems.

Before summarizing our main results stated in the paper, let us consider some alternative approaches for defining the $\mathbf{u}$-excess. We define the $\mathbf{u}$-excess by applying a utility function over the excess, however, it is also possible to take the excess of the utilities instead. This approach would also generalize the prenucleolus and the percapita prenucleolus. In addition, using the identity or the percapita utility functions, the two above-defined $\mathbf{u}$-excesses would coincide, due to the linearity of the utility functions. However, in case of a non-linear utility function, the two approaches would differ. Taking the excess of the utililities instead of the utility of the excess is an interesting approach, however, in this paper, we stick to using the utility of the excess, since we are interested in the idea, that different coalitions might value their excesses differently. 

The literature also pushes us to our choice. The general nucleolus \citep{PottersTijs1992,MaschlerPottersTijs1992} also applies functions on the excesses. The results stated in this paper only hold in case of taking the utility of the excess and not in case of taking the excess of the utilities. This other idea can be a fuel for future work.

One of the main contributions of this paper is the notion of the $\mathbf{u}$-prenucleolus, which is a generalization of the prenucleolus and the percapita prenucleolus. It is also a special case of the general prenucleolus. In this setup, we define the generalization of the core, least-core, balanced games: the $\mathbf{u}$-core, the $\mathbf{u}$-least-core and $\mathbf{u}$-balanced games, respectively. A respectable achievement of our researches is that using these notions, we prove the generalizations of the Bondareva--Shapley theorem and two theorems from Katsev and Yanovskaya to the $ \mathbf{u}$-prenucleolus, which, according to our present knowledge, cannot be proven to the general prenucleolus. 
We also introduce the notion of $\mathbf{u}$-essential coalitions, and show that the class of $\mathbf{u}$-essential coalitions form  a characterization set for the $\mathbf{u}$-prenucleolus of $\mathbf{u}$-balanced games. In other words, we generalize \citet{Huberman1980}'s result to TU-games with utilities. 

The setup of the paper is as follows: In Section \ref{sec:Pre}, we discuss the basic concepts and notations used in the paper; in Section \ref{sec:u}, we introduce the notions of TU-games with utilities, $\mathbf{u}$-excess, $\mathbf{u}$-prenucleolus and $\mathbf{u}$-core; in Section \ref{sec:balanced}, we define the so-called $\mathbf{u}$-balanced games, and show that a game is $\mathbf{u}$-balanced if and only if its $\mathbf{u}$-core is nonempty. 

In Section \ref{sec:lexi}, we give a generalization of the lexicographic center algorithm for calculating the $\mathbf{u}$-prenucleolus -- this algorithm is a special case of the lexicographic center algorithm for calculating the general nucleolus by \cite{MaschlerPottersTijs1992}. 

In Section \ref{sec:existence}, we generalize \citet{Katsev2013}'s theorem, namely, we give a sufficient and necessary condition for the nonemptyness of the $\mathbf{u}$-prenucleolus. In Section \ref{sec:unicity}, we generalize another theorem by \citeauthor{Katsev2013}, namely, we give a sufficient and necessary condition for the single-valuedness of the $\mathbf{u}$-prenucleolus. 

In Section \ref{sec:essential}, we define the so-called $\mathbf{u}$-essential coalitions, and generalize \citet{Huberman1980}'s theorem by proving that the $\mathbf{u}$-essential coalitions form a characterization set for the $\mathbf{u}$-prenucleolus of $\mathbf{u}$-balanced games. In Section \ref{sec:invariance}, we describe the $\mathbf{u}$ functions for which the prenucleolus and the core coincide with the $\mathbf{u}$-prenucleolus and the $\mathbf{u}$-core, respectively. 

In Section \ref{sec:application}, we show that in case of assignment games, using the reciprocal percapita utility function, we get polynomial many $\mathbf{u}$-essential coalitions in the number of players. Finally, the last section offers a brief conclusion.

\section{Preliminaries}\label{sec:Pre}


Given a nonempty finite set of players $N$ and a characteristic function $v \colon 2^{N} \to \mathbb{R}$ such that $v(\emptyset )=0$, $v$ is called a TU-\emph{game}. Let $\mathcal{G}^N$ denote the class of TU-games with player set $N$; additionally, let the set of coalitions be denoted by $\mathcal{P}(N) := \{ S\subseteq N \} $, and the set of non-trivial coalitions be denoted by $\mathcal{P}^*(N):=\{ S\subseteq N\colon S\neq \emptyset , S\neq N\}$. Furthermore, let $\mathcal{D}_S$ denote the class of partitions of set $S\subseteq N$ but $\{ S\}$.


Let $\mathcal{A} \subseteq \mathcal{P} (N)$ be such that $\emptyset,N \in \mathcal{A}$, then $\mathcal{A}$ is called a set of feasible coalitions.  In this case, the characteristic function $v \colon \mathcal{A} \to \mathbb{R}$ is called a TU-game with restricted cooperation. Let $\mathcal{G}^{N,\mathcal{A}}$ denote the class of TU-games (henceforth: game) with feasible coalitions $\mathcal{A}$. If $\mathcal{A} = \mathcal{P} (N)$, then $\mathcal{G}^{N,\mathcal{A}} = \mathcal{G}^N$, so every introduced concept for games with restricted cooperation is a generalization of a concept for classical TU-games.

Let $\mathcal{A}^*=\mathcal{A}\setminus \{ N, \emptyset \}$ denote the set of feasible, non-trivial coalitions and let $\mathcal{D}_S^{A^*} = \{ B\in \mathcal{D}_S \colon B\subseteq \mathcal{A}^* \}$ denote the $\mathcal{A}^*$-partition set of $S\in \mathcal{A}^*$.


A solution is a set-valued mapping from a set of games with player set $N$ to $\mathbb{R}^N$. Widely used solutions in the literature are the core \citep{Shapley1955,Gillies1959}, the kernel \citep{DavisMaschler1965}, and the  bargaining set \citep{AumannMaschler1964} among others. A value is a singleton valued solution; well-known values in the literature are the Shapley-value \citep{Shapley1953} and the (pre)nucleolus \citep{Schmeidler1969} among others.


Let $I(v) := \{ x\in \mathbb{R}^{N} \colon \sum_{i \in N}x_i = v(N) \text{ and } x_i\geq v(\{ i\} ) \text{ } \forall \{i\} \in \mathcal{A}\}$ and $I^*(v) := \{ x\in \mathbb{R}^{N} \colon \sum_{i \in N}x_i = v(N)\}$ denote the set of imputations and preimputations of a game $v \in \mathcal{G}^{N, \mathcal{A}}$, respectively. 

Given a game $v \in \mathcal{G}^{N,\mathcal{A}}$, a coalition $S \in \mathcal{A}$ and a payoff vector $x \in \mathbb{R}^N$, the \emph{excess} of coalition $S$ by the payoff vector $x$ in the game $v$ is $e (S, x) := v(S) -x(S)$, where $x(S) := \sum_{i\in S}x_i$.

The core of a game $v \in \mathcal{G}^{N,\mathcal{A}}$ is the set of preimputations for which the excess of any feasible coalition is non-positive:
\begin{equation*}
\text{core}(v) := \left\{ x\in \mathbb{R}^{N} \colon x (N) = v(N) \text{ and } e(S, x) \leq 0, \forall S \in \mathcal{A}^\ast \right\}.
\end{equation*}
In case the core of a game is nonempty, we say that the game is balanced.

Given a game $v \in \mathcal{G}^{N,\mathcal{A}}$ and a payoff vector $x \in \mathbb{R}^N$, the excess vector by the payoff $x$ in the game $v$ is the vector containing all the excesses in non-increasing order, that is $E(x) := (e (S,x))_{S \in \mathcal{A}^\ast} \in \mathbb{R}^{|\mathcal{A}^\ast|}$, where $E (x)_i \geq E(x)_j$ if $i \leq j$.

The lexicographical ordering between $x, y \in \mathbb{R}^n$ is the following: we say that $x \leq _L y$ if $x=y$ or if there exists a $k$, such that $x_k < y_k$ and $x_i = y_i$ for every $i<k$. 

The nucleolus is the set of imputations which lexicographically minimize the excess vector over the set of imputations, that is, $N(v) = \{ x \in I(v) \colon E(x) \leq _L E(y) \ \forall y\in I(v) \}$; and the prenucleolus is the set of preimputations which lexicographically minimize the excess vector over the set of preimputations, that is, $N^*(v) = \{ x \in I^*(v) \colon E(x) \leq _L E(y) \ \forall y\in I^*(v) \}$.



A set of coalitions $\mathcal{S} \subseteq \mathcal{A}$ is a balanced set system if there exists a balancing weight system $\lambda_S \in \mathbb{R}_+$, $S \in \mathcal{S}$ such that

\begin{equation*}
\sum \limits_{S \in \mathcal{S}} \lambda_S \, \chi_S = \chi_N,
\end{equation*}

\noindent where $\chi_T \in \mathbb{R}^N$ is the characteristic vector of set $T$.



\section{Games with utility functions}\label{sec:u}

A well-known variant of the prenucleolus is the percapita prenucleolus \citep{Grotte1970, Grotte1972}. The percapita prenucleolus differs from the preucleolus in a way that instead of using the excesses, it uses the so-called percapita excesses. The percapita excess of a coalition $S \in \mathcal{A}^\ast$ of a game $v \in \mathcal{G}^{N,\mathcal{A}}$ with a payoff vector $x \in \mathbb{R}^N$ is $\frac{e(S, x)}{|S|}$. Similarly, the percapita excess vector is $E_{pc} (x) := (\frac{e (S,x)}{|S|})_{S \in \mathcal{A}^\ast} \in \mathbb{R}^{|\mathcal{A}^\ast|}$, where $E_{pc} (x)_i \geq E_{pc} (x)_j$ if $i \leq j$. Accordingly, the percapita prenucleolus is defined as follows: $N^\ast_{pc} = \{ x\in I^*(v) \colon E_{pc} (x) \leq _L E_{pc} (y) \ \forall y\in I^*(v)\}$.

\cite{Solymosi2019} considers a further generalization of the percapita prenucleolus, where, instead of dividing the excess by the cardinality of $S$, it is divided by $q(S)$, where $q$ is a real valued function over the feasible coalitions. Thereby, \cite{Solymosi2019} introduced the notion of the $q$-nucleolus $N_q$, which is defined as follows: $N^\ast_{q} = \{ x\in I^*(v) \colon E_{q} (x) \leq _L E_{q} (y) \ \forall y\in I^*(v)\}$, where the $q$-excess vector is defined as $E_{q} (x) := (\frac{e (S,x)}{q(S)})_{S \in \mathcal{A}^\ast}$, where $E_q (x)_i \geq E_q (x)_j$  if $i \leq j$.

Partially inspired by the above generalizations of the prenucleolus we generalize the prenucleolus further by introducing functions, called utility functions, applied to the excesses. Formally, see the following definition. 

\begin{definition}\label{def:uf}
A utility function $\mathbf{u} \colon \mathcal{A}^\ast \times \mathbb{R} \to \mathbb{R}$ is a family of functions $(u_S)_{S\in \mathcal{A}^\ast}$ such that $u_S \colon \mathbb{R} \to \mathbb{R}$ is strictly monotone increasing, continuous, and its domain is $\mathbb{R}$. Moreover, the ranges of $u_S$ and $u_T$ are the same for every $S, T \in \mathcal{A}^\ast$; let $R_{\mathbf{u}}$ denote the common range.
\end{definition}

Let the $\mathbf{u}$-excess of a coalition $S\in \mathcal{A}$ by the payoff vector $x\in \mathbb{R}$ in the game $v$ be as follows: $u_S \circ e(S, x) = u_S (v(S)-x(S))$. Moreover, let the $\mathbf{u}$-excess vector be defined as $E(x) := (u_S (e (S,x)))_{S \in \mathcal{A}^\ast} \in \mathbb{R}^{|\mathcal{A}^\ast|}$, where $E (x)_i \geq E(x)_j$ if $i \leq j$.


We can now define the $\mathbf{u}$-prenucleolus similarly to the percapita prenucleolus.

\begin{definition}\label{def:upren}
The $\mathbf{u}$-prenucleolus is the set of preimputations, which lexicographically minimizes the $\mathbf{u}$-excess vectors over the set of preimputations. Formally,

\begin{equation*}
N^\ast_{\mathbf{u}} (v) :=\{ x \in I^*(v) \colon E_{\mathbf{u}} (x) \leq _L E_{\mathbf{u}} (y) \ \forall y\in I^*(v) \}.
\end{equation*}
\end{definition}

\begin{example}
Some examples of utility functions:
\begin{itemize}
\item If $\mathbf{u}$ is the identity function, then the $\mathbf{u}$-penucleolius is the prenucleolus.

\item If $\mathbf{u}$ is defined for all $S\in \mathcal{A}^*$ as $u_S(t) = \frac{t}{|S|}$, then the $\mathbf{u}$-prenucleolus is the percapita prenucleolus.

\item We can also define $\mathbf{u}$ as a shift by a constant $c$. In this case $u_S(t) = t+c$, and for any game $v\in \mathcal{G}^{N, \mathcal{A}}$ the $\mathbf{u}$-prenucleolus is the prenucleolus of the game $v'$, where $v'(S) = v(S) + c$ for all $S\in \mathcal{A}^\ast$, and $v' (N) = v(N)$. Since the prenucleolus is invariant for shifting, in this case the prenucleolus and the $\mathbf{u}$-prenucleolus of a game coincide.

\item Note that $\mathbf{u}$ is not necessarily a family of linear functions. For example $u_S(t) = \arctan(t)$ for all $S\in \mathcal{A}^*$ can also be a utility function.
\end{itemize}
\end{example}

Next, we introduce a generalization of the core \citep{Shapley1955,Gillies1959}: 

\begin{definition}\label{def:ucore}
Given a utility function $\mathbf{u}$, the $\ucore$ of a game $v \in \mathcal{G}^{N,\mathcal{A}}$ is defined as follows:

\begin{equation*}
\ucore (v) := \left\{ x\in \mathbb{R}^{N} \colon x (N) = v(N) \textup{ and } u_S \circ e(S, x) \leq 0, \forall S \in \mathcal{A}^\ast \right\} .
\end{equation*}
\end{definition}

Notice that, if $\mathcal{A} = \mathcal{P} (N)$ and $\mathbf{u}$ is the identity function, then the $\mathbf{u}$-core is the core.

\section{$\mathbf{u}$-balanced games}\label{sec:balanced}
 
Let $\mathfrak{B}$ denote the class of balanced set systems of $\mathcal{A}$. 
 
\begin{definition}
Given a game $v \in \mathcal{G}^{N,\mathcal{A}}$ and a utility function $\mathbf{u}$, the game $v$ is $\mathbf{u}$-balanced, if either $R_{\mathbf{u}} \subseteq \mathbb{R}_- \setminus \{0\}$ or if $0\in R_{\mathbf{u}}$ and

\begin{equation}\label{eq:balancedGame}
\max \limits_{\mathcal{B} \in \mathfrak{B}} \left(\lambda_N v(N) + \sum \limits_{S \in \mathcal{B} \setminus \{ N\} } \lambda_S \, (v (S) - \mathbf{u}^{-1}_S (0)) \right) \leq v (N) ,
\end{equation}

\noindent where $(\lambda_S)_{S \in \mathcal{B}}$ is the balancing weight system of the balanced coalition system $\mathcal{B}$. 
\end{definition}

Notice that, if $\mathcal{A} = \mathcal{P} (N)$ and $\mathbf{u}$ is the identity function, then the $\mathbf{u}$-balancedness reverts to balancedness.

The following theorem is a generalization of the Bondareva--Shapley theorem \citep{Bondareva1963,Shapley1967,Faigle1989} to games with utilities.

\begin{theorem}
Given a game $v \in \mathcal{G}^{N,\mathcal{A}}$ and a utility function $\mathbf{u}$, the $\ucore (v) \neq \emptyset$ if and only if $v$ is $\mathbf{u}$-balanced.
\end{theorem}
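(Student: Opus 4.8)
The plan is to reduce the statement to the classical Bondareva--Shapley theorem by an appropriate translation of the characteristic function, handling the two cases in the definition of $\mathbf{u}$-balancedness separately. First I would dispose of the degenerate case $R_{\mathbf{u}} \subseteq \mathbb{R}_- \setminus \{0\}$: here every $u_S$ takes only strictly negative values, so $u_S \circ e(S,x) < 0$ automatically for every $S \in \mathcal{A}^\ast$ and every $x$, hence $\ucore(v) = I^*(v) \neq \emptyset$ (the preimputation set is a nonempty affine subspace). Thus in this case the game is $\mathbf{u}$-balanced by definition and its $\mathbf{u}$-core is nonempty, so the equivalence holds trivially.

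For the main case $0 \in R_{\mathbf{u}}$, the key observation is that since each $u_S$ is strictly monotone increasing and continuous with range $R_{\mathbf{u}} \ni 0$, the preimage $\mathbf{u}^{-1}_S(0)$ is a well-defined real number, and $u_S \circ e(S,x) \leq 0$ if and only if $e(S,x) \leq \mathbf{u}^{-1}_S(0)$, i.e. $v(S) - x(S) \leq \mathbf{u}^{-1}_S(0)$, i.e. $x(S) \geq v(S) - \mathbf{u}^{-1}_S(0)$. So I would define a new game $v' \in \mathcal{G}^{N,\mathcal{A}}$ by $v'(S) := v(S) - \mathbf{u}^{-1}_S(0)$ for $S \in \mathcal{A}^\ast$, $v'(N) := v(N)$, and $v'(\emptyset) := 0$. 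Then $\ucore(v) = \core(v')$ directly from the rewriting above, so $\ucore(v) \neq \emptyset$ iff $v'$ is balanced. Now I apply the Bondareva--Shapley theorem (in the restricted-cooperation form cited, \citealp{Faigle1989}) to $v'$: $v'$ is balanced iff for every balanced set system $\mathcal{B} \in \mathfrak{B}$ with balancing weights $(\lambda_S)_{S \in \mathcal{B}}$ one has $\sum_{S \in \mathcal{B}} \lambda_S v'(S) \leq v'(N)$. Splitting off the term $S = N$ (for which $v'(N) = v(N)$) and substituting $v'(S) = v(S) - \mathbf{u}^{-1}_S(0)$ for the other terms gives exactly the inequality
\begin{equation*}
\lambda_N v(N) + \sum_{S \in \mathcal{B} \setminus \{N\}} \lambda_S \bigl(v(S) - \mathbf{u}^{-1}_S(0)\bigr) \leq v(N),
\end{equation*}
and taking the maximum over $\mathcal{B} \in \mathfrak{B}$ recovers \eqref{eq:balancedGame}. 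Hence $v'$ balanced $\iff$ $v$ is $\mathbf{u}$-balanced, completing the chain of equivalences.

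A couple of technical points need care, and one of them is the main obstacle. The lesser points: one should check that $N$ may appear in a balanced set system (it always does with weight $1$ as the trivial balanced system, which is why the max in \eqref{eq:balancedGame} is $\geq v(N)$, forcing the inequality to be an equality for that system — consistent with $\leq v(N)$), and that the convention $v'(\emptyset) = 0$ is harmless since $\emptyset$ never enters a balancing equation with positive weight affecting $\chi_N$. The main obstacle is making sure the cited Bondareva--Shapley theorem is available in exactly the generality needed here, namely for games with restricted cooperation $\mathcal{A} \subsetneq \mathcal{P}(N)$; the classical statement is for $\mathcal{A} = \mathcal{P}(N)$, and the restricted-cooperation version (which is why \citet{Faigle1989} is cited alongside the classical references) requires that balanced set systems be drawn from $\mathcal{A}$ and that a suitable nonemptiness/duality argument still goes through. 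If one does not want to quote that form as a black box, the cleanest self-contained route is to prove $\ucore(v) = \core(v') \neq \emptyset$ directly via LP duality: $\core(v')$ is the feasible region of the LP $\{x(N) = v'(N),\ x(S) \geq v'(S)\ \forall S \in \mathcal{A}^\ast\}$, which is nonempty iff the optimal value of the auxiliary LP minimizing the largest normalized excess is $\leq 0$, and dualizing that LP produces precisely the balancing-weight inequality. I would present the translation-to-$v'$ argument as the main line and invoke the restricted-cooperation Bondareva--Shapley result, noting the LP-duality proof as the fallback.
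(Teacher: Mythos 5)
Your case analysis is not exhaustive, and that is the one genuine gap. Since each $u_S$ is continuous and strictly increasing on $\mathbb{R}$, the common range $R_{\mathbf{u}}$ is an open interval, so besides your two cases ($R_{\mathbf{u}} \subseteq \mathbb{R}_-\setminus\{0\}$ and $0 \in R_{\mathbf{u}}$) there is a third: $R_{\mathbf{u}} \subseteq \mathbb{R}_+\setminus\{0\}$. There the game is never $\mathbf{u}$-balanced by definition, $\mathbf{u}^{-1}_S(0)$ does not exist, and your translated game $v'$ is undefined; to close the ``if and only if'' you must observe that $u_S \circ e(S,x) > 0$ for every $x$ and every $S \in \mathcal{A}^\ast$, so the $\mathbf{u}$-core is empty, matching non-$\mathbf{u}$-balancedness. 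The paper treats this as its explicit Case 2. One line fixes it, but as written your proof silently assumes that every utility function whose range is not purely negative must hit zero.

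Beyond that, your argument is sound and is essentially the paper's argument in different packaging. In its Case 3 the paper rewrites $u_S \circ e(S,x) \leq 0$ as $x(S) \geq v(S) - u_S^{-1}(0)$ --- exactly your game $v'$, though it never names it --- and then runs the LP-duality argument explicitly: minimize $x(N)$ over that system, note that the LP is feasible and bounded below so an optimum $x^\ast$ exists, characterize nonemptiness of $\ucore(v)$ by $x^\ast(N) = v(N)$, dualize, and read off inequality \eqref{eq:balancedGame} from strong duality. Your main line outsources this step to a restricted-cooperation Bondareva--Shapley theorem applied to $v'$; you correctly flag that the citation must carry the restricted-cooperation generality (balanced systems drawn from $\mathcal{A}$, feasibility of the primal), and your stated fallback --- proving $\core(v') \neq \emptyset$ directly by LP duality --- is precisely the paper's proof. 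The reduction to $v'$ is a slightly cleaner way to see why the correction term $\mathbf{u}^{-1}_S(0)$ appears in \eqref{eq:balancedGame}, but it does not change the underlying mathematics.
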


\begin{proof}
We consider three cases:

Case 1: $R_{\mathbf{u}} \subseteq \mathbb{R}_{-}\setminus \{ 0\} $. In case of such a utility function, the $\mathbf{u}$-core of a game is always non-empty. The reason for this is that $u_S\circ e(S, x) < 0$ for all $x\in I^*(v)$ and $S\in \mathcal{A}^*$. Therefore, the $\mathbf{u}$-core of the game is not empty, even more, $\mathbf{u}$-core$(v) = I^\ast (v)$.

\bigskip

Case 2: $R_{\mathbf{u}} \subseteq \mathbb{R}_{+}\setminus \{ 0\} $. In case of such a utility function the $\mathbf{u}$-core of a game is always empty. The reason behind it is that $u_S\circ e(S, x) > 0$ for all $x\in I^*(v)$ and $S\in \mathcal{A}^*$; therefore, the $\mathbf{u}$-core of the game is empty.

\bigskip

Case 3: Otherwise, that is, $0\in R_{\mathbf{u}}$. First, consider the following problem

\begin{equation}\label{P1}
\begin{array}{lrl}
& x (N) & \to \min \\
\textup{s.t.} & u_S \circ e (S,x) & \leq 0 \mspace{50mu} \forall S \in \mathcal{A}^* \\
& e(N, x) & \leq 0 \\
& x & \in \mathbb{R}^N \\   
\end{array}
\end{equation}

The problem \eqref{P1} is equivalent to the following LP (here we use that $0\in R_u$)

\begin{equation}\label{P2}
\begin{array}{lrl}
& x (N) & \to \min \\
\textup{s.t.} &  e (S,x) & \leq u_S^{-1}(0) \mspace{50mu} \forall S \in \mathcal{A}^* \\
& e(N, x) & \leq 0 \\
& x & \in \mathbb{R}^N \\   
\end{array}
\end{equation}

The LP \eqref{P2} is equivalent to the following LP

\begin{equation}\label{P3}
\begin{array}{lrl}
& x (N) & \to \min \\
\textup{s.t.} &  x(S) & \geq v(S) - u_S^{-1}(0) \mspace{50mu} \forall S \in \mathcal{A}^*  \\
& x(N) & \geq v(N) \\
& x & \in \mathbb{R}^N \\   
\end{array}
\end{equation}

It is easy to see that LP \eqref{P3} always has a feasible solution. Moreover, since its objective function is bounded from below ($x(N)\geq v(N)$) it always has an optimal solution. Let $x^*$ denote the optimal solution of \eqref{P3}. Then the $\mathbf{u}$-core is nonempty if and only if $x^*(N) = v(N)$.

The dual of \eqref{P3} is the following:

\begin{equation*}
\begin{array}{lrl}
& \lambda_N v(N) + \sum_{S\in \mathcal{A}^* } \lambda _S (v(S)-u_S^{-1}(0)) & \to \max \\
\textup{s.t.} &  \sum_{S\in \mathcal{A}^* \cup \{N\}} \lambda_S \chi_S & = \chi_N  \\
& \lambda_S & \geq 0 \mspace{50mu} \forall S \in \mathcal{A}^*\cup \{N\} \\   
\end{array}
\end{equation*}

By the strong duality theorem of LPs we know that the optimum of the primal LP equals the optimum of the dual LP.

Suppose that $x^*$ and $\lambda ^*$ are optimal solutions of the primal and the dual LPs, respectively. Notice, that $\lambda^*_N v(N) + \sum_{S\in \mathcal{A}^*} \lambda^* _S (v(S)-u_S^{-1}(0))$ equals the left hand side of \eqref{eq:balancedGame}. Due to the strong duality theorem, it is less or equal than $v(N)$ if and only if $x^*(N)$ is less than or equal to $v(N)$, which is equivalent to the $\mathbf{u}$-core being nonempty.
\end{proof}

\section{A lexicographic center approach for the $\mathbf{u}$-prenucleolus}\label{sec:lexi}

In this section, we introduce a modification of the lexicographic center algorithm \citep{Kopelowitz1967,MaschlerPelegShapley1979} for the $\mathbf{u}$-prenucleolus. More precisely, we show how the idea behind the lexicographic center algorithm can be applied for the $\mathbf{u}$-prenucleolus.

The lexicographic center algorithm works by solving a series of LPs. Note that in our case the optimization problems are not necessarily linear. We do not provide any algorithm to solve the non-linear problems, but we introduce a condition to decide whether the problems have optimums or not. 
The following lemma provides the considered condition:

\begin{lemma}\label{lemma:lin2u}
Let $v \in \mathcal{G}^{N, \mathcal{A}}$ be a game, $\mathbf{u}$ be a utility function and $X \subseteq I^*(v)$. Then

\begin{equation}\label{eq:lemma_lin}
\begin{array}{llr}
& k  \to \min & \\
\textup{s.t.} & e(S, x) \leq k & S\in \mathcal{A}^* \\
& x \in X & \\
\end{array}
\end{equation}
has an optimal solution, if and only if
\begin{equation}\label{eq:lemma_u_1}
\begin{array}{llr}
& k \to \min & \\
\textup{s.t.} & u_S \circ e(S, x) \leq k & S\in \mathcal{A}^* \\
& x \in X & \\
\end{array}
\end{equation}
has an optimal solution.
\end{lemma}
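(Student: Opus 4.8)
The plan is to establish the equivalence by relating the feasible region of \eqref{eq:lemma_lin} to that of \eqref{eq:lemma_u_1} via the monotone change of variables coming from the $u_S$'s, and to use the fact that a continuous function on a fixed set $X$ attains a minimum iff it is bounded below on the nonempty feasible region. The starting observation is that $X \subseteq I^*(v)$ is fixed and does not vary with $k$; the only interaction between $x$ and $k$ is through the family of inequalities. So for a fixed $x \in X$, the set of $k$ satisfying the constraints of \eqref{eq:lemma_lin} is exactly $[\,\max_{S\in\mathcal{A}^*} e(S,x),\,\infty)$, and similarly for \eqref{eq:lemma_u_1} it is $[\,\max_{S\in\mathcal{A}^*} u_S\circ e(S,x),\,\infty)$ (here $\mathcal{A}^*$ is finite, so the maxima exist). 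Hence \eqref{eq:lemma_lin} has an optimal solution iff the function $f(x) := \max_{S\in\mathcal{A}^*} e(S,x)$ attains its infimum over $X$, and \eqref{eq:lemma_u_1} has an optimal solution iff $g(x) := \max_{S\in\mathcal{A}^*} u_S\circ e(S,x)$ attains its infimum over $X$. (In both problems, feasibility is automatic: any $x\in X$ together with a large enough $k$ is feasible, and $X\neq\emptyset$ whenever the problems are being considered; if $X=\emptyset$ both problems are trivially infeasible and the equivalence holds vacuously.)

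Next I would exploit monotonicity to compare $f$ and $g$ pointwise and, more importantly, to compare where they are "small." Fix a reference coalition $S_0\in\mathcal{A}^*$ and write $h := u_{S_0}$, a strictly increasing continuous bijection from $\mathbb{R}$ onto $R_{\mathbf{u}}$. The key technical point is that the map $\Phi(x) := \max_{S\in\mathcal{A}^*} u_S\circ e(S,x)$ and the map $\Psi(x):= \max_{S\in\mathcal{A}^*} e(S,x)$, while not related by a single monotone reparametrization of the value (because the $u_S$ can differ across $S$), nevertheless have the same sublevel-set behaviour in the following sense: I would show that $\{x\in X : \Psi(x)\le c\}$ and $\{x\in X: \Phi(x)\le c'\}$ can be sandwiched between one another for suitable thresholds. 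Concretely, since each $u_S$ is continuous and strictly increasing with common range $R_{\mathbf{u}}$, for any real $c$ one has $e(S,x)\le c \iff u_S\circ e(S,x)\le u_S(c)$, and one can bound $u_S(c)$ uniformly: if $R_{\mathbf{u}}=\mathbb{R}$ or more generally on any compact value-window, $\min_{S} u_S(c)$ and $\max_S u_S(c)$ are finite (again $\mathcal{A}^*$ finite). Thus $\{\Psi \le c\} \subseteq \{\Phi \le \max_S u_S(c)\}$ and $\{\Phi \le d\} \subseteq \{\Psi \le \sup\{t : u_S(t)\le d \ \forall S\}\}$ when the latter sup is finite. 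Using these inclusions, boundedness-below of $\Psi$ on $X$ transfers to boundedness-below of $\Phi$ on $X$ and conversely, and the same inclusions show the infimum is attained on one side iff it is attained on the other: an infimizing sequence for $\Psi$ is, up to the monotone bounds, an infimizing sequence for $\Phi$, and a point attaining $\inf\Psi$ has $\Phi$-value at most the corresponding threshold, hence realizes (after checking it is also a lower bound) the infimum of $\Phi$.

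I expect the main obstacle to be the bookkeeping at the "boundary" of $R_{\mathbf{u}}$: the $u_S$ need not be surjective onto $\mathbb{R}$ (e.g. $\arctan$ has range $(-\pi/2,\pi/2)$), so a threshold $k$ in \eqref{eq:lemma_u_1} that lies outside $R_{\mathbf{u}}$ must be handled — if $k<\inf R_{\mathbf{u}}$ the problem is infeasible, if $k>\sup R_{\mathbf{u}}$ the constraints are vacuous — and correspondingly $u_S^{-1}(k)$ may fail to exist. The clean way around this is to argue directly with infima of the functions $\Psi,\Phi$ (whose optimal values, if finite, are genuinely attained real numbers) rather than with the auxiliary variable $k$, reducing everything to: $\Psi$ is bounded below on $X$ iff $\Phi$ is, and in that case the argmin sets coincide. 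The inclusions above, together with continuity of the $u_S$ and finiteness of $\mathcal{A}^*$, give exactly this, completing the proof.
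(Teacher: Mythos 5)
Your reduction of both problems to ``does the max-excess function attain its infimum over $X$'' is fine, but the mechanism you propose for transferring attainment between $\Psi(x):=\max_{S}e(S,x)$ and $\Phi(x):=\max_{S}u_S\circ e(S,x)$ does not work, and two of the claims it rests on are false. First, ``attains a minimum iff bounded below'' and ``$\Psi$ is bounded below on $X$ iff $\Phi$ is'' both fail: if every $u_S=\arctan$, then $\Phi\geq -\pi/2$ on any $X$ whatsoever, while $\Psi$ may well be unbounded below; the relevant dichotomy is whether $\inf_X\Phi$ equals the left endpoint $a$ of $R_{\mathbf u}$, not whether $\Phi$ is bounded below. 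Second, the sandwich $\{\Psi\le c\}\subseteq\{\Phi\le\max_S u_S(c)\}$ (and its converse with $\min_S u_S^{-1}(d)$) is too coarse to transfer \emph{attainment}: the optimal values of the two problems do not correspond under any single monotone reparametrization, and the argmin sets do \emph{not} coincide. Concretely, take $N=\{1,2\}$, $v(\{1\})=1$, $v(\{2\})=v(N)=0$, $X=I^*(v)$, $u_{\{1\}}=\mathrm{id}$, $u_{\{2\}}(t)=2t$: then $\Psi$ is minimized at $x_1=1/2$ (value $1/2$) while $\Phi$ is minimized at $x_1=1/3$ (value $2/3$), and $\Phi(1/2)=1>2/3$. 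So a minimizer of $\Psi$ does not ``realize the infimum of $\Phi$'', and the step you flag as ``after checking it is also a lower bound'' cannot be carried out.

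The idea you are missing --- and the one the paper's proof is built on --- is \emph{coordinate-wise} domination rather than control of the maxima alone. Arguing by contraposition, if one of the problems has no optimal solution, one can choose a next iterate $x'$ whose \emph{largest} excess lies strictly below the \emph{smallest} excess of the current $x$ (this is where non-attainment of the infimum is exploited, via a threshold of the form $\min_S e(S,x)-1$, resp.\ $\min_S u_S\circ e(S,x)-\varepsilon$ kept inside $R_{\mathbf u}$); then $e(S,x')<e(S,x)$ for \emph{every} $S$ simultaneously, and this coalition-by-coalition inequality is exactly what survives the application of the different strictly increasing maps $u_S$, yielding $\max_S u_S\circ e(S,x')<\max_S u_S\circ e(S,x)$ and hence non-attainment for the other problem (and symmetrically in the reverse direction). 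Uniform sublevel-set inclusions cannot substitute for this, because they only bound the maximum and say nothing about each individual excess.
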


\begin{proof}
If $X = \emptyset$, then neither problem \eqref{eq:lemma_lin} nor problem \eqref{eq:lemma_u_1} has an optimal solution.
Therefore, w.l.o.g. we can assume that $X \neq \emptyset$.

Since $X$ is nonempty, if problem \eqref{eq:lemma_lin} does not have an optimal solution, then for every $k \in \mathbb{R}$ there exists an $x_k \in X$ such that $\max_{S\in \mathcal{A}^\ast} e(S, x_k) \leq k$.

Let us define the following sequence: let $k_1\in \mathbb{R}$ be an arbitrary number and $x_1\in X$ a payoff vector such that $\max_{S\in \mathcal{A}^\ast} e(S, x_1) \leq k_1$.

Let $k_2:= \min_{S\in \mathcal{A}^\ast} e(S, x_1) -1$, and $x_2$ be such that $\max_{S\in \mathcal{A}^\ast} e(S, x_2) \leq k_2$.

For $i >2$ let $k_i:= \min_{S\in \mathcal{A}^\ast} e(S, x_{i-1}) -1$, and $x_i\in X$ be such that $\max_{S\in \mathcal{A}^*} e(S, x_i) \leq k_i$.

Then for every $n \in \mathbb{N}^+$ $\min_{S\in \mathcal{A}^\ast} e(S, x_n) > \max_{S\in \mathcal{A}^\ast} e(S, x_{n+1})$, therefore, for all $S\in \mathcal{A}^*$ we have that $e(S, x_n) > e(S, x_{n+1})$. Since $u_S$ is strictly monotone increasing for every $S\in \mathcal{A}^*$, it follows that for every $S\in \mathcal{A}^*$ it holds that $u_S \circ e(S, x_n) > u_S \circ e(S, x_{n+1})$. Therefore, $\max_{S\in \mathcal{A}^*} u_S \circ e(S, x_n) > \max_{S\in \mathcal{A}^*} u_S \circ e(S, x_{n+1})$.

So for every $x\in X$ there exists an $x' \in X$ such that 
\begin{equation*}
\max_{S\in \mathcal{A}^*} u_S \circ e(S, x) > \max_{S\in \mathcal{A}^*} u_S \circ e(S, x').
\end{equation*}
Therefore, \eqref{eq:lemma_u_1} does not have an optimal solution.

\bigskip

Now suppose that \eqref{eq:lemma_u_1} does not have an optimal solution. Since $X$ is nonempty and $D_{u_S} = \mathbb{R}$ (where $D_f$ is the domain of $f$) for every $S\in \mathcal{A}^*$ we have that problem \eqref{eq:lemma_u_1} has a feasible solution. Let $R_{\mathbf{u}} = (a, b)$ denote the range of $\mathbf{u}$, where $a, b \in \mathbb{R}\cup \{ \infty, -\infty \} $.

If problem \eqref{eq:lemma_u_1} does not have an optimal solution, then $\inf\{ t \colon u_S \circ e(S, x) \leq t \ \forall S\in \mathcal{A}^*, x\in X\} = a$. 
It means that for every $k \in (a, b) \cap \mathbb{R}$ there exists $x_k \in X$ such that $\max_{S\in \mathcal{A}^\ast} u_S \circ e(S, x_k) \leq k$.
Furthermore, it is clear that $\min_{S\in \mathcal{A}^\ast} u_S \circ e(S, x) \in (a, b) \cap \mathbb{R}$ for every $x\in X$.

Let us define the following sequence:
let $k_1 \in (a, b) \cap \mathbb{R}$ be an arbitrary number and $x_1\in X$ be such that $\max_{S\in \mathcal{A}^*} u_S \circ e(S, x_1) \leq k_1$.

Since $\min_{S\in \mathcal{A}^*} u_S \circ e(S, x_1) \in (a, b)$, there exists $\varepsilon > 0$ such that $\min_{S\in \mathcal{A}^*} u_S \circ e(S, x_1) - \varepsilon > a$.
Let $k_2 := \min_{S\in \mathcal{A}^*} u_S \circ e(S, x_1) - \varepsilon$ and $x_2 \in X$ be such that $\max_{S\in \mathcal{A}^*} u_S \circ e(S, x_2) \leq k_2$.

For any $n > 2$: let $\varepsilon_n >0$ be such that $\min_{S\in \mathcal{A}^*} u_S \circ e(S, x_{n-1}) - \varepsilon_n > a$. Let $k_n := \min_{S\in \mathcal{A}^\ast} u_S \circ e(S, x_{n -1}) - \varepsilon_n$ and $x_n \in X$ be such that $\max_{S\in \mathcal{A}^\ast} u_S \circ e(S, x_n) \leq k_n$.

Then, for every $n \in \mathbb{N}^+$ $\min_{S\in \mathcal{A}^\ast} u_S \circ e(S, x_n) > \max_{S\in \mathcal{A}^\ast} u_S \circ e(S, x_{n+1})$, furthermore, for each $S\in \mathcal{A}^\ast$ it holds that $u_S \circ e(S, x_n) > u_S \circ e(S, x_{n+1})$. Since $u_S$ is strictly monotone increasing, it holds that $e(S, x_n) > e(S, x_{n+1})$,  for all $S\in \mathcal{A}^\ast$. Meaning that $\max_{S\in \mathcal{A}^\ast} e(S, x_n) > \max_{S\in \mathcal{A}^\ast} e(S, x_{n+1})$.

So, for every $x\in X$ there exists an $x'\in X$ such that
\begin{equation*}
\max_{S\in \mathcal{A}^*} e(S, x) > \max_{S\in \mathcal{A}^*} e(S, x') \, .
\end{equation*}
Therefore, problem \eqref{eq:lemma_lin} does not have an optimal solution either.
\end{proof}

The following lemma is a direct corollary of Lemma \ref{lemma:lin2u}.

\begin{lemma}\label{lemma:lexi}
Let $v \in \mathcal{G}^{N,\mathcal{A}}$ be a game, and $\mathbf{u^1}$, $\mathbf{u^2}$ be utility functions. Let $X\subseteq I^*(v)$, then 
\begin{equation*}
\min_{x\in X} \max_{S\in \mathcal{A}^*} u^1_S \circ (v(S)-x(S))
\end{equation*}
exists if and only if
\begin{equation*}
\min_{x\in X} \max_{S\in \mathcal{A}^*} u^2_S \circ (v(S)-x(S))
\end{equation*}
exists.
\end{lemma}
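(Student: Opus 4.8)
The plan is to recognise both displayed minimax problems as epigraph reformulations of the optimisation problems appearing in Lemma~\ref{lemma:lin2u}, and then to apply that lemma twice, once with $\mathbf{u}=\mathbf{u^1}$ and once with $\mathbf{u}=\mathbf{u^2}$, using the linear problem \eqref{eq:lemma_lin} as a common intermediary.

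First I would record the elementary observation that, for any family $(f_S)_{S\in\mathcal{A}^*}$ of real-valued functions on $X$, the minimum $\min_{x\in X}\max_{S\in\mathcal{A}^*} f_S(x)$ exists (that is, the infimum is finite and attained) if and only if the problem
\begin{equation*}
k\to\min \qquad \text{s.t.}\qquad f_S(x)\le k \ \ \forall S\in\mathcal{A}^*,\ \ x\in X
\end{equation*}
has an optimal solution. Indeed, if $x^\ast$ attains the minimax value $m$, then $(k,x)=(m,x^\ast)$ is feasible for the epigraph problem and every feasible $(k,x)$ satisfies $k\ge\max_S f_S(x)\ge m$, so $(m,x^\ast)$ is optimal; conversely, if $(k^\ast,x^\ast)$ is optimal for the epigraph problem, then $\max_S f_S(x^\ast)\le k^\ast$, minimality of $k^\ast$ forces $\max_S f_S(x^\ast)=k^\ast$, and for any $x\in X$ the pair $(\max_S f_S(x),x)$ is feasible, hence $\max_S f_S(x)\ge k^\ast$, so $x^\ast$ attains the minimax.

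Applying this observation with $f_S=u^1_S\circ e(S,\cdot)$ and with $f_S=e(S,\cdot)$, Lemma~\ref{lemma:lin2u} taken with $\mathbf{u}=\mathbf{u^1}$ yields that $\min_{x\in X}\max_{S\in\mathcal{A}^*} u^1_S\circ(v(S)-x(S))$ exists if and only if $\min_{x\in X}\max_{S\in\mathcal{A}^*}(v(S)-x(S))$ exists. The same argument with $\mathbf{u}=\mathbf{u^2}$ gives that $\min_{x\in X}\max_{S\in\mathcal{A}^*}(v(S)-x(S))$ exists if and only if $\min_{x\in X}\max_{S\in\mathcal{A}^*} u^2_S\circ(v(S)-x(S))$ exists. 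Chaining the two equivalences proves the lemma.

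There is no real obstacle here: the substance is entirely carried by Lemma~\ref{lemma:lin2u}, and the only points requiring (minor) care are the epigraph reformulation above and the bookkeeping check that the hypothesis $X\subseteq I^*(v)$ is exactly what Lemma~\ref{lemma:lin2u} assumes, so it transfers verbatim. One should also note that the reformulation remains valid when $X=\emptyset$: both the minimax and the epigraph problem then simply fail to have a solution, consistently with the opening lines of the proof of Lemma~\ref{lemma:lin2u}.
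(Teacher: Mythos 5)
Your proposal is correct and follows exactly the route the paper intends: the paper simply declares Lemma \ref{lemma:lexi} to be ``a direct corollary of Lemma \ref{lemma:lin2u}'', and your argument spells out the two details that make this precise, namely the epigraph reformulation of the minimax problems and the chaining of the two applications of Lemma \ref{lemma:lin2u} through the common linear problem \eqref{eq:lemma_lin}.
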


Next, we introduce a variant of the lexicographic center algorithm \citep{Kopelowitz1967,MaschlerPelegShapley1979}, which can be used for calculating the $\mathbf{u}$-prenucleolus of a game. 

Let $v \in \mathcal{G}^{N,\mathcal{A}}$ be a game and $\mathbf{u}$ be a utility function. Consider the following problem:

\begin{equation}\label{LP1}
\begin{array}{llr}
& t \to \min & \\
\textup{s.t.} & u_S \circ e(S, x) \leq t, & \ S \in \mathcal{A}^\ast \\
& x \in I^\ast (v) &  \\
& t \in R_{\mathbf{u}} & 
\end{array}
\end{equation}

If problem \eqref{LP1} has an optimal solution, let $t_1$ denote the optimum of \eqref{LP1}.

Let $X_1$ be defined as follows:

\begin{equation*}
X_1 = \{ x \in I^\ast (v) \colon u_S \circ e(S,x) \leq t_1, \  \forall S \in \mathcal{A}^\ast \} .
\end{equation*}

\noindent Furthermore, let

\begin{equation*}
W_1 = \{ S \in \mathcal{A}^\ast \colon \exists c_S \in \mathbb{R}, \text{ such that } u_S \circ e(S,x) = c_S,\ \forall x \in X_1 \} .
\end{equation*}

Let $k \geq 2$, and let us consider the following problem:

\begin{equation}\label{LPk}
\begin{array}{lll}
& t \to \min & \\
\textup{s.t.} & u_S \circ e(S,x) \leq t , & \ S \in \mathcal{A}^\ast \setminus (\cup_{r=1}^{k-1} W_r) \\
& x \in X_{k-1} &  \\
& t \in \mathbb{R} & 
\end{array}
\end{equation}

If \eqref{LPk} has an optimal solution, let $t_k$ denote the optimum of \eqref{LPk}. 

Let $X_k$ be defined as follows

\begin{equation*}
X_k = \{ x \in X_{k-1} \colon u_S \circ e(S,x) \leq t_k, \  \forall S \in \mathcal{A}^\ast \setminus (\cup_{r=1}^{k-1} W_r) \} .
\end{equation*}

\noindent Furthermore, let

\begin{equation*}
W_k = \{ S \in \mathcal{A}^\ast \colon \exists c_S \in \mathbb{R}, \text{ such that } u_S \circ e(S,x) = c_S,\ \forall x \in X_k \} .
\end{equation*}

It is easy to see that $t_k \geq t_{k+1}$ and $X_k \supseteq X_{k+1}$ for all $k \in \mathbb{N}_+$, and there exists $k^*$ such that for all $l\geq k^*$ it holds that $X_l = X_{k^*}$, and $X_{k^*} \neq \emptyset$.

\bigskip

\citet{MaschlerPottersTijs1992} proved that a more general version of the above algorithm - the lexicographical center algorithm for finding the general prenucleolus - returns with the general prenucleolus. The $\mathbf{u}$-prenucleolus is a special case of the general prenucleolus, hence the result by \citet{MaschlerPottersTijs1992} implies the following theorem:

\begin{theorem}\label{thm:charupren}
For every game $v \in \mathcal{G}^{N,\mathcal{A}}$ and utility function $\mathbf{u}$ it holds that

\begin{equation*}
N_{\mathbf{u}}^\ast (v) = X_{k^\ast}.
\end{equation*}
\end{theorem}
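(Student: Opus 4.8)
The plan is to reduce the statement to the already-established result of \citet{MaschlerPottersTijs1992} for the general prenucleolus, by exhibiting the $\mathbf{u}$-prenucleolus as a genuine instance of the general prenucleolus and checking that the algorithm above is literally the lexicographic center algorithm for that instance. First I would recall the definition of the general prenucleolus from \citet{MaschlerPottersTijs1992}: one is given a collection of functions $f_S$ (one for each relevant coalition) of the payoff vector, and the general prenucleolus is the set of preimputations lexicographically minimizing the vector of values $(f_S(x))_{S}$ sorted in non-increasing order. Here I would set $f_S(x) := u_S(e(S,x)) = u_S(v(S)-x(S))$ for $S \in \mathcal{A}^\ast$. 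Since each $u_S$ is continuous and $x \mapsto e(S,x)$ is affine, each $f_S$ is continuous; this continuity (and the structure needed for the general nucleolus machinery) is exactly what Definition~\ref{def:uf} guarantees. Hence $N^\ast_{\mathbf{u}}(v)$, as defined in Definition~\ref{def:upren}, coincides verbatim with the general prenucleolus associated with the family $(f_S)_{S \in \mathcal{A}^\ast}$ on the feasibility set $I^\ast(v)$.

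Next I would verify that the iterative construction producing $X_{k^\ast}$ is precisely the lexicographic center algorithm of \citet{MaschlerPottersTijs1992} applied to this family. The key points to check are: (i) problem \eqref{LP1} minimizes the current maximum of the $f_S$ over $I^\ast(v)$, and by Lemma~\ref{lemma:lin2u} (equivalently Lemma~\ref{lemma:lexi}) this minimum exists exactly when the corresponding linear problem has an optimum, which is the solvability hypothesis the general algorithm needs at each stage; (ii) $W_k$ is exactly the set of coalitions whose $f_S$-value has become constant on the current feasible set $X_k$ — these are the coalitions that get "frozen" and removed from the objective in the next round, matching the bookkeeping in \citet{MaschlerPottersTijs1992}; (iii) $X_k$ is the set of optimal points of stage $k$, so the nested sequence $X_1 \supseteq X_2 \supseteq \cdots$ stabilizes at some $X_{k^\ast}$, as noted in the text. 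Once this dictionary is in place, the theorem follows immediately: \citet{MaschlerPottersTijs1992} proved that the lexicographic center algorithm for the general prenucleolus terminates with exactly the general prenucleolus, so $X_{k^\ast} = N^\ast_{\mathbf{u}}(v)$.

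I would also handle the one slightly delicate point: the range constraint $t \in R_{\mathbf{u}}$ in \eqref{LP1} versus $t \in \mathbb{R}$ in \eqref{LPk}. Since each $f_S$ takes values in $R_{\mathbf{u}}$, the maximum of finitely many $f_S$ values also lies in $R_{\mathbf{u}}$, so restricting $t$ to $R_{\mathbf{u}}$ in the first stage changes nothing about the optimum; in later stages the frozen coalitions may force the relevant maximum to drift, but the argument only uses that the optimal $t_k$, when it exists, equals the actual attained maximum, which is automatic. Thus the discrepancy in the feasible range of $t$ between \eqref{LP1} and \eqref{LPk} is immaterial, and the identification with the general-nucleolus algorithm is exact.

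The main obstacle I anticipate is not mathematical depth but rather the matching of conventions: making sure that the hypotheses under which \citet{MaschlerPottersTijs1992} prove correctness of their algorithm (continuity of the $f_S$, the feasible set being an affine subspace or polyhedron, the existence of the stagewise minima) are all met by the present setup, and in particular that the special structure $f_S = u_S \circ e(S,\cdot)$ with strictly increasing continuous $u_S$ satisfies whatever regularity \citet{MaschlerPottersTijs1992} impose. Once that verification is done — and Lemma~\ref{lemma:lin2u} supplies exactly the existence-of-optimum condition that is needed — the proof is essentially a citation. So the write-up would consist of (a) stating the embedding $f_S := u_S \circ e(S,\cdot)$, (b) checking the hypotheses of the general-nucleolus theorem, (c) observing that \eqref{LP1}–\eqref{LPk} is their algorithm under this embedding, and (d) invoking their theorem to conclude $N^\ast_{\mathbf{u}}(v) = X_{k^\ast}$.
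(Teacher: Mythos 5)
Your proposal matches the paper's own argument: the paper likewise justifies Theorem \ref{thm:charupren} by observing that the $\mathbf{u}$-prenucleolus is the general prenucleolus for the family $f_S = u_S \circ e(S,\cdot)$ and that the iteration \eqref{LP1}--\eqref{LPk} is the lexicographic center algorithm of \citet{MaschlerPottersTijs1992}, so the result follows by citation. Your write-up is simply a more explicit verification of the same embedding and hypotheses.
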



\section{The nonemptyness of the $\mathbf{u}$-prenucleolus}\label{sec:existence}

In case of classical TU-games ($\mathcal{A} = \mathcal{P}(N)$ and $u_S$ is the identity function for all $S \in \mathcal{A}^*$), the prenucleolus always consists of exactly one point (payoff vector) \citep{Schmeidler1967}. However, this does not hold in case of TU-games with restricted cooperation, where the prenucleolus is a set -- not necessarily a singleton set -- of payoff vectors. \citet{Katsev2013} showed that the prenucleolus of a game is nonempty if and only if the set of feasible coalitions is a balanced set of coalitions. In this section we are going to prove that this statement holds for the $\mathbf{u}$-prenucleolus of a game as well. The proof relies on the generalization of Kohlberg's theorem (Theorem \ref{th:Kohlberg}) and on Lemma \ref{lemma:lexi}.

\citeauthor{Kohlberg1971}'s theorem (\cite{Kohlberg1971}) for classical TU-games is as follows: 

\begin{theorem}[\citeauthor{Kohlberg1971}'s theorem]
Given a game $v \in \mathcal{G}^N$ and a payoff vector $x\in I^*(v)$, $x$ is the prenucleous if and only if for every $\alpha \in \mathbb{R}$ it holds that
either $\mathcal{D}(\alpha, x^\ast) := \{ S\in \mathcal{P}^*(N) \colon e(S, x) \geq \alpha \} = \emptyset$, or $\mathcal{D}(\alpha, x^\ast)$ is a balanced set of coalitions.
\end{theorem}

\citet{MaschlerPottersTijs1992} proposed a generalisation of \citeauthor{Kohlberg1971}'s theorem for the general prencucleolus. The $\mathbf{u}$-prenucleolus is a special case of the general prenucleolus, therefore the following generalization of Kohlberg's theorem is a corollary of the result by \cite{MaschlerPottersTijs1992}:

\begin{theorem}[Generalization of \citeauthor{Kohlberg1971}'s theorem]\label{th:Kohlberg}
Given a game $v \in \mathcal{G}^{N,\mathcal{A}}$, a utility function $\mathbf{u}$, and $x \in I^\ast (v)$: $x$ is an element of the $\mathbf{u}$-prenucleolus ($x \in N_{\mathbf{u}}^\ast (v)$) if and only if $\mathcal{D}_{\mathbf{u}}(\alpha, x)$ is a balanced set of coalitions for every $\alpha$ such that $\mathcal{D}_{\mathbf{u}}(\alpha, x) \neq \emptyset$, where $\mathcal{D}_{\mathbf{u}}(\alpha , x^\ast) := \{ S\in \mathcal{A}^\ast \colon  u_S \circ e(S, x^\ast) \geq \alpha \}$.
\end{theorem}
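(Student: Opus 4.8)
The plan is to prove the equivalence by adapting Kohlberg's classical argument, after recording that there is also a shortcut. The $\mathbf{u}$-prenucleolus is an instance of the general prenucleolus of \citet{MaschlerPottersTijs1992}: the ``objection'' attached to $S\in\mathcal{A}^\ast$ is the map $x\mapsto u_S\circ e(S,x)=u_S(v(S)-x(S))$, which is continuous on $I^\ast(v)$ and monotone along every ray $x+td$ with $d(N)=0$; under this identification $\mathcal{D}_{\mathbf{u}}(\alpha,x)$ is precisely their $\alpha$-level family and a balanced set system of $\mathcal{A}$ is precisely their notion of a balanced family of objections, so the statement is just the specialization of their Kohlberg-type theorem, and verifying this specialization is one route. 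For a self-contained proof I would transcribe Kohlberg's argument with $e(S,\cdot)$ replaced by $u_S\circ e(S,\cdot)$ throughout; the key point is that this transcription is essentially free of charge. For $d(N)=0$ and $t\ge 0$ we have $u_S\circ e(S,x+td)=u_S(e(S,x)-t\,d(S))$, which by strict monotonicity of $u_S$ strictly decreases in $t$ exactly when $d(S)>0$, strictly increases exactly when $d(S)<0$, and is constant exactly when $d(S)=0$ --- the same sign behaviour as the linear excess $e(S,x+td)=e(S,x)-t\,d(S)$ --- while by continuity of $u_S$ a strict inequality $u_S\circ e(S,x)<\alpha$ persists under small perturbations of $x$. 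Since Kohlberg's proof uses only the signs of the $d(S)$ and such ``$t$ small'' perturbations, it goes through verbatim.

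For the ``only if'' direction I would argue by contraposition, which I expect to be straightforward. Suppose $\mathcal{D}_0:=\mathcal{D}_{\mathbf{u}}(\alpha_0,x)$ is nonempty but not a balanced set system. Then $\chi_N\notin\cone\{\chi_S:S\in\mathcal{D}_0\}$, and separating $\chi_N$ from this closed cone and projecting onto $\{d\in\mathbb{R}^N:d(N)=0\}$ (using that every $S\in\mathcal{A}^\ast$ is nonempty) yields $d$ with $d(N)=0$ and $d(S)<0$ for all $S\in\mathcal{D}_0$. Put $\beta_1:=\max_{S\in\mathcal{A}^\ast}u_S\circ e(S,x)$ and $\mathcal{D}_1:=\{S\in\mathcal{A}^\ast:u_S\circ e(S,x)=\beta_1\}$; since $\mathcal{D}_0\ne\emptyset$ we have $\beta_1\ge\alpha_0$, hence $\mathcal{D}_1\subseteq\mathcal{D}_0$ and $d(S)<0$ on $\mathcal{D}_1$. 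Moving to $x_t:=x-td$ for small $t>0$ keeps $x_t\in I^\ast(v)$ and makes $u_S\circ e(S,x_t)<\beta_1$ for every $S\in\mathcal{D}_1$ (strict monotonicity) and for every $S\notin\mathcal{D}_1$ (continuity), so the first coordinate of $E_{\mathbf{u}}(x_t)$ is strictly below that of $E_{\mathbf{u}}(x)$, i.e. $E_{\mathbf{u}}(x_t)<_L E_{\mathbf{u}}(x)$, and hence $x\notin N_{\mathbf{u}}^\ast(v)$.

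For the ``if'' direction I would run the standard improving-direction argument. Assuming every nonempty $\mathcal{D}_{\mathbf{u}}(\alpha,x)$ is balanced, I take $y\in I^\ast(v)$ with $E_{\mathbf{u}}(y)<_L E_{\mathbf{u}}(x)$ and aim for a contradiction: if $\gamma$ is the value at the first coordinate where the two sorted vectors differ, then $|\{S\in\mathcal{A}^\ast:u_S\circ e(S,y)\ge\gamma\}|$ is strictly smaller than $|\mathcal{D}_{\mathbf{u}}(\gamma,x)|$, while $y$ and $x$ carry the same multiset of $\mathbf{u}$-excesses strictly above $\gamma$; feeding balancing weights of the level sets $\mathcal{D}_{\mathbf{u}}(\beta,x)$, $\beta\ge\gamma$, into $d(N)=0$ for $d=y-x$ and running a downward induction on these levels should show that this strict inequality cannot occur. (Equivalently, one runs the lexicographic center recursion of Section \ref{sec:lexi}: balancedness of $\mathcal{D}_1$ makes $\sum_{S\in\mathcal{D}_1}\lambda_S e(S,\cdot)$ constant on $I^\ast(v)$, which forces $t_1=\beta_1$, $x\in X_1$, and the coalitions appearing with positive weight in a balancing system of $\mathcal{D}_1$ into $W_1$, after which one repeats the argument on $X_{k-1}$ with \eqref{LPk}, concluding $x\in X_{k^\ast}=N_{\mathbf{u}}^\ast(v)$ by Theorem \ref{thm:charupren}.) This converse is where I expect the main obstacle: it is exactly Kohlberg's inductive lemma --- reconciling ``$\mathcal{D}_{\mathbf{u}}(\beta_j,x)$ is balanced relative to $\bigcup_{r<j}\mathcal{D}_{\mathbf{u}}(\beta_r,x)$'' with ``$\mathcal{D}_{\mathbf{u}}(\beta_j,x)$ is balanced'' and pinning down precisely which coalitions enter each $W_k$ --- together with the bookkeeping for distinct coalitions that share a common $\mathbf{u}$-excess value. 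None of this is affected by the utility functions, since the induction only manipulates the $d(S)$ and the balancing weights; that is precisely why invoking the general-prenucleolus result of \citet{MaschlerPottersTijs1992} is the efficient alternative.
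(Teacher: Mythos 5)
The paper does not actually prove this theorem: it states it as an immediate corollary of the Kohlberg-type characterization of the general nucleolus in \citet{MaschlerPottersTijs1992}, noting only that the $\mathbf{u}$-prenucleolus is a special case of the general prenucleolus. That is precisely the ``shortcut'' you identify at the outset, and your observation that $u_S\circ e(S,x+td)$ has the same sign behaviour in $t$ as $e(S,x+td)$ (by strict monotonicity and continuity of $u_S$) is exactly the point that makes the specialization legitimate, so on that route you and the paper coincide. Your additional self-contained argument goes beyond what the paper offers: the ``only if'' direction via separation of $\chi_N$ from $\cone\{\chi_S\colon S\in\mathcal{D}_0\}$ and the perturbation $x-td$ is complete and correct. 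The ``if'' direction, however, is still only a sketch --- the downward induction over the level sets $\mathcal{D}_{\mathbf{u}}(\beta_j,x)$ and the counting argument reconciling balancedness of the nested level sets with the lexicographic comparison is exactly the nontrivial core of Kohlberg's proof, and you explicitly leave it as ``should show.'' As it stands, your write-up matches the paper's (citation-based) proof and sketches a stronger direct one; to make the direct version stand on its own you would need to carry out that induction, but for parity with the paper the appeal to \citet{MaschlerPottersTijs1992} already suffices.
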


The following proposition, which generalizes Theorem 2. on page 58 of \citet{Katsev2013}, is a corollary of Theorem \ref{th:Kohlberg}.

\begin{proposition}\label{th:Katsev1}
Let $v\in \mathcal{G}^{N, \mathcal{A}}$ be a game and $\mathbf{u}$ be a utility function. If the $\mathbf{u}$-prenucleolus of the game is nonempty, then $\mathcal{A}^\ast$ is a balanced set of coalitions.
\end{proposition}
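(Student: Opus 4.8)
The plan is to read off the conclusion directly from the forward implication of the generalized Kohlberg theorem (Theorem \ref{th:Kohlberg}): membership of a preimputation in the $\mathbf{u}$-prenucleolus forces \emph{every} nonempty level set $\mathcal{D}_{\mathbf{u}}(\alpha, x)$ to be a balanced set of coalitions, and because $\mathcal{A}^\ast$ is finite we can choose $\alpha$ small enough that this level set is the whole of $\mathcal{A}^\ast$.

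Concretely, I would first pick an arbitrary $x \in N_{\mathbf{u}}^\ast (v)$, which exists by hypothesis; since $N_{\mathbf{u}}^\ast (v) \subseteq I^\ast (v)$ we have $x \in I^\ast (v)$, so Theorem \ref{th:Kohlberg} is applicable to $x$. We may assume $\mathcal{A}^\ast \neq \emptyset$ (the case $\mathcal{A}^\ast = \emptyset$ being degenerate, corresponding to $N$ a singleton). As $\mathcal{A}^\ast$ is finite, the quantity $\alpha^\ast := \min_{S \in \mathcal{A}^\ast} u_S \circ e(S, x)$ is a well-defined real number and the minimum is attained. By the choice of $\alpha^\ast$, every $S \in \mathcal{A}^\ast$ satisfies $u_S \circ e(S, x) \geq \alpha^\ast$, whence $\mathcal{D}_{\mathbf{u}}(\alpha^\ast, x) = \mathcal{A}^\ast$. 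Since $\mathcal{A}^\ast \neq \emptyset$, this level set is nonempty, so Theorem \ref{th:Kohlberg} guarantees that $\mathcal{D}_{\mathbf{u}}(\alpha^\ast, x) = \mathcal{A}^\ast$ is a balanced set of coalitions, which is the claim.

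The argument is essentially immediate once Theorem \ref{th:Kohlberg} is available, so there is no genuine obstacle; the only points requiring a moment's care are (i) using finiteness of $\mathcal{A}^\ast$ so that the minimum defining $\alpha^\ast$ is actually attained — this is what makes $\mathcal{D}_{\mathbf{u}}(\alpha^\ast, x)$ equal to all of $\mathcal{A}^\ast$ rather than a proper subsystem — and (ii) checking that $x \in I^\ast (v)$ so the hypothesis of Theorem \ref{th:Kohlberg} is met. One should also keep in mind that the empty family is not a balanced set of coalitions, which is why the degenerate case $\mathcal{A}^\ast = \emptyset$ is set aside at the outset.
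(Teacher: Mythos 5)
Your proof is correct and follows exactly the paper's own argument: pick $x \in N_{\mathbf{u}}^\ast(v)$, set $\alpha^\ast$ to be the smallest $\mathbf{u}$-excess at $x$ (the last entry of $E_{\mathbf{u}}(x)$), observe that $\mathcal{D}_{\mathbf{u}}(\alpha^\ast, x) = \mathcal{A}^\ast$, and invoke Theorem \ref{th:Kohlberg}. The extra remarks on finiteness of $\mathcal{A}^\ast$ and the degenerate case are harmless refinements of the same route.
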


\begin{proof}
Since the $\mathbf{u}$-prenucleolus is nonempty, there exists $x \in N_{\mathbf{u}}^\ast (v)$. Then by Theorem \ref{th:Kohlberg} it holds that $\mathcal{D}_{\mathbf{u}}(\alpha , x)$ is a balanced set of coalitions for every $\alpha$ such that $\mathcal{D}_{\mathbf{u}}(\alpha, x) \neq \emptyset$.

Take $\alpha^\ast$ such that it is the smallest element of $E_{\mathbf{u}}(x)$, that is, let $\alpha^* := E_{\mathbf{u}}(x)_{|\mathcal{A}^\ast|}$. Then, $\mathcal{D}_{\mathbf{u}}(\alpha ^\ast, x) = \mathcal{A}^\ast$, hence, by Theorem \ref{th:Kohlberg}, $\mathcal{A}^\ast$ is a balanced set of coalitions.
\end{proof}

\begin{proposition}\label{th:Katsev2}
Let $v\in \mathcal{G}^{N, \mathcal{A}}$ be a game and $\mathbf{u}$ be a utility function. If $\mathcal{A}^*$ is a balanced set of coalitions, then the $\mathbf{u}$-prenucleolus of the game is nonempty.
\end{proposition}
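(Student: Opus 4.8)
The plan is to reduce the existence of a point of $N^\ast_{\mathbf{u}}(v)$ to the existence of a lexicographic minimiser of the continuous map $x \mapsto E_{\mathbf{u}}(x)$ over a \emph{compact} set, and to exploit the balancedness of $\mathcal{A}^\ast$ to produce such a set.

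First I would fix a balancing weight system $(\lambda_S)_{S\in\mathcal{A}^\ast}$ of $\mathcal{A}^\ast$, so $\lambda_S>0$ for every $S\in\mathcal{A}^\ast$ and $\sum_{S\in\mathcal{A}^\ast}\lambda_S\chi_S=\chi_N$; write $\Lambda:=\sum_{S\in\mathcal{A}^\ast}\lambda_S>0$. The elementary identity driving everything is that for every $x\in I^\ast(v)$,
\begin{equation*}
\sum_{S\in\mathcal{A}^\ast}\lambda_S\,e(S,x)=\sum_{S\in\mathcal{A}^\ast}\lambda_S v(S)-\sum_{i\in N}x_i\left(\sum_{S\in\mathcal{A}^\ast,\ i\in S}\lambda_S\right)=\sum_{S\in\mathcal{A}^\ast}\lambda_S v(S)-v(N)=:c,
\end{equation*}
a constant not depending on $x$ (here I used $x(N)=v(N)$ together with $\sum_{S\in\mathcal{A}^\ast,\ i\in S}\lambda_S=(\chi_N)_i=1$). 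Hence $\max_{S\in\mathcal{A}^\ast}e(S,x)\ge c/\Lambda$ for all $x\in I^\ast(v)$, so the linear program ``minimise $t$ subject to $e(S,x)\le t$ for all $S\in\mathcal{A}^\ast$ and $x\in I^\ast(v)$'' is feasible and bounded below, hence attains its optimum. By Lemma~\ref{lemma:lexi} (with $\mathbf{u^1}$ the identity utility, $\mathbf{u^2}=\mathbf{u}$, $X=I^\ast(v)$), the value $t_1:=\min_{x\in I^\ast(v)}\max_{S\in\mathcal{A}^\ast}u_S\circ e(S,x)$ then exists, and $t_1\in R_{\mathbf{u}}$. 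I would put $X_1:=\{x\in I^\ast(v):u_S\circ e(S,x)\le t_1\ \forall S\in\mathcal{A}^\ast\}$; rewriting the constraints as $x(S)\ge v(S)-u_S^{-1}(t_1)$ shows $X_1$ is a nonempty polyhedron. Since $E_{\mathbf{u}}(x)_1=t_1$ for every $x\in X_1$, whereas $E_{\mathbf{u}}(y)_1>t_1$ for every $y\in I^\ast(v)\setminus X_1$, the set $N^\ast_{\mathbf{u}}(v)$ is exactly the set of lexicographic minimisers of $E_{\mathbf{u}}$ over $X_1$, so it is enough to exhibit one such minimiser.

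Next I would compute the recession cone of $X_1$: a direction $d$ lies in it iff $d(N)=0$ and $d(S)\ge 0$ for all $S\in\mathcal{A}^\ast$, and for such $d$ the identity above gives $\sum_{S\in\mathcal{A}^\ast}\lambda_S\,d(S)=d(N)=0$ with all $\lambda_S>0$ and all $d(S)\ge 0$, which forces $d(S)=0$ for every $S\in\mathcal{A}^\ast$. Thus the recession cone equals the linear subspace $L:=\{d\in\mathbb{R}^N:d(N)=0,\ d(S)=0\ \forall S\in\mathcal{A}^\ast\}$, so $L$ is the lineality space of $X_1$ and $X_1=(X_1\cap L^\perp)+L$ with $X_1\cap L^\perp$ a nonempty \emph{compact} polytope. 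For $d\in L$ and $x\in X_1$ one has $x+d\in I^\ast(v)$ and $e(S,x+d)=e(S,x)$ for every $S\in\mathcal{A}^\ast$, hence $E_{\mathbf{u}}(x+d)=E_{\mathbf{u}}(x)$; therefore the lexicographic minimisers of $E_{\mathbf{u}}$ over $X_1$ coincide with those over $X_1\cap L^\perp$. Finally, $E_{\mathbf{u}}$ is continuous (each coordinate $u_S\circ e(S,\cdot)$ is continuous and reordering is continuous), and a continuous map into $\mathbb{R}^{|\mathcal{A}^\ast|}$ attains a lexicographic minimum over any nonempty compact set — minimise coordinate $1$ (attained, minimiser set compact), then coordinate $2$ over it, and so on through all $|\mathcal{A}^\ast|$ coordinates; this yields a point of $N^\ast_{\mathbf{u}}(v)$.

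The hard part is the unboundedness of $I^\ast(v)$, and of the least-$\mathbf{u}$-core $X_1$: what makes the argument go through is the strict positivity of the balancing weights (equivalently, $\chi_N$ being a \emph{strictly} positive combination of the $\chi_S$, $S\in\mathcal{A}^\ast$), which collapses the recession cone of $X_1$ onto a subspace along which $E_{\mathbf{u}}$ is constant, allowing a harmless passage to a compact slice — and indeed the claim fails if one only requires nonnegative weights. The other point, the transfer between the (generally non-linear) $\mathbf{u}$-excess optimisation and an honest linear program, is exactly what Lemma~\ref{lemma:lexi} (via Lemma~\ref{lemma:lin2u}) provides.
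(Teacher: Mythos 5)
Your proof is correct, but it departs from the paper's argument in its second half, so a comparison is worthwhile. Both proofs open identically: the balancing weights give $\sum_{S\in\mathcal{A}^\ast}\lambda_S e(S,x)=\sum_{S}\lambda_S v(S)-v(N)$ for all $x\in I^\ast(v)$, which bounds the first linear program from below, and Lemma \ref{lemma:lexi} transfers solvability to the $\mathbf{u}$-excess problem. From there the paper stays inside the lexicographic center framework of Section \ref{sec:lexi}: it checks that the optimization problem at \emph{every} iteration $k$ reduces (again via Lemma \ref{lemma:lexi}) to a feasible LP that is bounded below by the same balancing-weight estimate, and then concludes nonemptiness from Theorem \ref{thm:charupren}, i.e.\ from the Maschler--Potters--Tijs result that the algorithm terminates in $N_{\mathbf{u}}^\ast(v)=X_{k^\ast}\neq\emptyset$. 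You instead stop after the first step and replace all subsequent iterations by a convex-geometric argument: the strict positivity of the balancing weights forces the recession cone of $X_1$ to collapse onto its lineality space $L$, along which $E_{\mathbf{u}}$ is constant, so the problem reduces to lexicographically minimizing the continuous map $E_{\mathbf{u}}$ over the compact slice $X_1\cap L^\perp$, where iterated minimization over nested compact sets does the job. Your route buys self-containedness — it needs only Lemma \ref{lemma:lexi} and standard facts about polyhedra, not the correctness of the lexicographic center algorithm — and it isolates exactly where balancedness enters (killing the recession directions). The paper's route buys uniformity with Section \ref{sec:lexi} and with Proposition \ref{th:Katsev1}, and its per-iteration boundedness computation is reused conceptually elsewhere; the price is the reliance on the cited Theorem \ref{thm:charupren}. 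One small point worth making explicit in your write-up: the identity of $N_{\mathbf{u}}^\ast(v)$ with the set of lexicographic minimizers over $X_1$ uses that any $y\notin X_1$ has first sorted $\mathbf{u}$-excess strictly above $t_1$, which you do state; and $t_1\in R_{\mathbf{u}}$ because $R_{\mathbf{u}}$ is an interval and $t_1$ is attained as one of the finitely many values $u_S\circ e(S,x^\ast)$ — both fine as written.
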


\begin{proof}
The $\mathbf{u}$-prenucleolus of the game is nonempty, if the considered optimization problem attains an optimal solution in every iteration of the generalized lexicographic center approach discussed in Section \ref{sec:lexi}. We are going to show that if $\mathcal{A}^*$ is a balanced set of coalitions, then in every iteration of the generalized lexicographic center approach the considered optimization problem attains an optimal solution.

Take an arbitrary step in the generalized lexicographic center approach (see section \ref{sec:lexi}). Then, we have to solve the following minimization problem:

\begin{equation}\label{eq:Katsev_u}
\begin{array}{llr}
& t \to \min & \\
\textup{s.t.} & u_S \circ e(S, x) \leq t, & \ S \in \mathcal{A}^\ast \setminus W \\
& u_S \circ e(S, x) = c_S, & S\in W \\
& x\in I^*(v) &  \\
& t \in R_{\mathbf{u}}, & 
\end{array}
\end{equation}
where $W=\cup_{i=1}^k W^u_i$ for some $k$.

By Lemma \ref{lemma:lexi}, problem \eqref{eq:Katsev_u} has an optimal solution if and only if the following LP has:

\begin{equation}\label{LPallitas}
\begin{array}{llr}
& t \to \min & \\
\textup{s.t.} & v(S)-x(S) \leq t, & \ S \in \mathcal{A}^\ast \setminus W \\
& v(S) - x(S) = u_S^{-1}(c_S), & S\in W \\
& x(N) = v(N) &  \\
& t \in R_{\mathbf{u}} & 
\end{array}
\end{equation}

Since the LP \eqref{LPallitas} has a feasible solution (every solution of the optimization problem in the previous iteration is a feasible solution here, or, if we are at the first iteration, that is $W= \emptyset$, then it is easy to see that LP \eqref{LPallitas} has a feasible solution), it is enough to prove that the objective function in \eqref{LPallitas} is bounded from below. 

Let $\{ \lambda_S\} _{S\in \mathcal{A}^\ast}$ be a balancing weight system, then 

\begin{eqnarray*}
\sum_{S\in \mathcal{A}^\ast \setminus W} \lambda_S (v(S)-t) + \sum_{S\in W} \lambda_S (v(S)-u_S^{-1}(c_S)) & \leq & \sum_{S\in \mathcal{A}^\ast} \lambda_S \chi_S^\top = v(N) \\
\sum_{S\in \mathcal{A}^\ast \setminus W} \lambda_S v(S) + \sum_{S\in W} \lambda_S (v(S)- u_S^{-1}(c_S)) - v(N) & \leq & \sum_{S\in \mathcal{A}^\ast \setminus W} \lambda_S t, \\
\end{eqnarray*}

\noindent where $\chi_S$ denotes the characteristic vector of a set $S\subseteq N$.

Since $0<\lambda_S$, $\forall S\in \mathcal{A}^\ast$ it holds that $\sum_{S\in \mathcal{A}^\ast \setminus W} \lambda_S >0$, hence

\begin{equation*}
\frac{\sum_{S\in \mathcal{A}^* \setminus W} \lambda_S v(S) + \sum_{S\in W} \lambda_S (v(S)- u_S^{-1}(c_S)) - v(N)}{\sum_{S\in \mathcal{A}^* \setminus W} \lambda_S} \leq t
\end{equation*}

The left side of the inequality is a constant, hence it gives a lower bound for the right side, which is the objective function of problem \eqref{LPallitas}. Therefore, it is bounded from below meaning that problem \eqref{LPallitas} has an optimal solution. 
\end{proof}

The following theorem, which generalizes Theorem 2. on page 58 of  \citet{Katsev2013} comes from Propositions \ref{th:Katsev1} and \ref{th:Katsev2}:

\begin{theorem}\label{th:Katsew}
Let $v\in \mathcal{G}^{N, \mathcal{A}}$ be a game and $\mathbf{u}$ be a utility function. Then $\mathcal{A}^\ast$ is a balanced set of coalitions, if and only if the $\mathbf{u}$-prenucleolus of the game is nonempty.
\end{theorem}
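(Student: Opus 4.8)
The plan is simply to assemble the two implications that have already been isolated as Propositions~\ref{th:Katsev1} and~\ref{th:Katsev2}. The statement to be proved is the biconditional ``$\mathcal{A}^\ast$ is a balanced set of coalitions if and only if $N_{\mathbf{u}}^\ast(v)\neq\emptyset$'', and each of the two directions is exactly one of those propositions, so the proof is a one-step combination.

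For the ``only if'' direction (nonemptyness forces $\mathcal{A}^\ast$ to be a balanced set), I would invoke Proposition~\ref{th:Katsev1}: pick any $x\in N_{\mathbf{u}}^\ast(v)$, apply the generalization of Kohlberg's theorem (Theorem~\ref{th:Kohlberg}) to conclude that $\mathcal{D}_{\mathbf{u}}(\alpha,x)$ is balanced whenever it is nonempty, and then take $\alpha$ to be the minimal entry of $E_{\mathbf{u}}(x)$, so that $\mathcal{D}_{\mathbf{u}}(\alpha,x)=\mathcal{A}^\ast$ and $\mathcal{A}^\ast$ is therefore balanced.

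For the ``if'' direction ($\mathcal{A}^\ast$ balanced forces $N_{\mathbf{u}}^\ast(v)\neq\emptyset$), I would invoke Proposition~\ref{th:Katsev2}: run the generalized lexicographic center approach of Section~\ref{sec:lexi} and verify that the optimization problem solved at each iteration attains an optimum. By Lemma~\ref{lemma:lexi} the (possibly nonlinear) problem~\eqref{eq:Katsev_u} has an optimum precisely when the LP~\eqref{LPallitas} does; the LP is feasible, and choosing a balancing weight system for $\mathcal{A}^\ast$ produces a constant lower bound for its objective, so the LP, and hence the original problem, is solvable at every step. Theorem~\ref{thm:charupren} then identifies the terminal set $X_{k^\ast}$ with $N_{\mathbf{u}}^\ast(v)$, which is nonempty.

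Because both halves are already established, there is essentially no obstacle at the level of this theorem itself; the substantive content — the Kohlberg-type characterization and, in particular, the boundedness-from-below estimate driven by the balancing weights in Proposition~\ref{th:Katsev2} — has been discharged in the two preceding propositions, and the theorem is merely their conjunction.
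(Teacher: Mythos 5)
Your proposal is correct and matches the paper exactly: the paper offers no separate argument for Theorem~\ref{th:Katsew}, stating only that it follows from Propositions~\ref{th:Katsev1} and~\ref{th:Katsev2}, which is precisely the one-step combination you describe. Your summaries of the two underlying propositions are also faithful to their proofs in the paper.
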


\section{The cardinality of the $\mathbf{u}$-prenucleolus}\label{sec:unicity}


In this section, we consider the size of the $\mathbf{u}$-prenucleolus. First, we introduce a notation. For a family of coalitions $\mathcal{A} \subseteq \mathcal{P} (N)$ let $X(\mathcal{A})$ denote the $|\mathcal{A}| \times |N|$ dimensional matrix, where its row vectors are the characteristic vectors of the sets from $\mathcal{A}$.

The following theorem is a generalization of Theorem 3. on page 59 of \citet{Katsev2013}.

\begin{theorem}
Given a game $v \in \mathcal{G}^{N,\mathcal{A}}$, where $\mathcal{A}$ is a balanced set of coalitions, and a utility function $\mathbf{u}$, the $\mathbf{u}$-prenucleolus of the game $v$ is a singleton if and only if rank$(X(\mathcal{A})) = |N|$.
\end{theorem}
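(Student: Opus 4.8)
The plan is to combine three facts that are already available: nonemptiness of $N^\ast_{\mathbf{u}}(v)$ when $\mathcal{A}$ is balanced (Theorem~\ref{th:Katsew}), the identification $N^\ast_{\mathbf{u}}(v)=X_{k^\ast}$ of the $\mathbf{u}$-prenucleolus with the convex set built by the lexicographic center approach (Theorem~\ref{thm:charupren}), and the generalized Kohlberg characterization (Theorem~\ref{th:Kohlberg}). First observe that $\operatorname{rank}(X(\mathcal{A}))=|N|$ holds exactly when the only $d\in\mathbb{R}^N$ with $d(S)=0$ for all $S\in\mathcal{A}$ is $d=0$, because the rows of $X(\mathcal{A})$ are the characteristic vectors $\chi_S$ and $\chi_S\cdot d=d(S)$. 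Since $\mathcal{A}$ is balanced, $N^\ast_{\mathbf{u}}(v)\neq\emptyset$ by Theorem~\ref{th:Katsew}; fix $x^0\in N^\ast_{\mathbf{u}}(v)$. For the direction where $\operatorname{rank}(X(\mathcal{A}))<|N|$, pick a nonzero $d$ with $d(S)=0$ for every $S\in\mathcal{A}$; then $e(S,x^0+\epsilon d)=e(S,x^0)$ for all $S\in\mathcal{A}$ and all $\epsilon\in\mathbb{R}$, so $x^0+\epsilon d\in I^\ast(v)$ and $E_{\mathbf{u}}(x^0+\epsilon d)=E_{\mathbf{u}}(x^0)$, whence $x^0+\epsilon d\in N^\ast_{\mathbf{u}}(v)$. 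Thus $N^\ast_{\mathbf{u}}(v)$ contains a whole line and is not a singleton; this direction is routine.

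The substantive direction is that $\operatorname{rank}(X(\mathcal{A}))=|N|$ forces $N^\ast_{\mathbf{u}}(v)$ to be a singleton, which I would prove by contradiction. Suppose $x,y\in N^\ast_{\mathbf{u}}(v)$ with $x\neq y$, and set $d:=y-x\neq 0$, so $d(N)=0$. By Theorem~\ref{thm:charupren}, $N^\ast_{\mathbf{u}}(v)=X_{k^\ast}$ is convex (each $X_k$ of Section~\ref{sec:lexi} is an intersection of half-spaces), hence the segment $z_\epsilon:=(1-\epsilon)x+\epsilon y$, $\epsilon\in[0,1]$, lies in $N^\ast_{\mathbf{u}}(v)$; therefore $E_{\mathbf{u}}(z_\epsilon)$, and so the multiset $\{u_S\circ e(S,z_\epsilon)\}_{S\in\mathcal{A}^\ast}$, is the same for every $\epsilon\in[0,1]$. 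Since $e(S,z_\epsilon)=e(S,x)-\epsilon\,d(S)$ is affine in $\epsilon$, each map $\epsilon\mapsto u_S\circ e(S,z_\epsilon)$ is continuous and is constant, strictly decreasing, or strictly increasing according as $d(S)=0$, $d(S)>0$, or $d(S)<0$. It then suffices to show $d(S)=0$ for every $S\in\mathcal{A}^\ast$: combined with $d(N)=0$ this gives $X(\mathcal{A})d=0$, hence $d=0$ by the rank hypothesis, contradicting $x\neq y$.

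To get $d\equiv 0$ on $\mathcal{A}^\ast$ I would peel off the excess levels of $x$ from the top. Let $\alpha_1>\cdots>\alpha_m$ be the distinct entries of $E_{\mathbf{u}}(x)$ and $\mathcal{C}_j:=\{S\in\mathcal{A}^\ast\colon u_S\circ e(S,x)=\alpha_j\}$, so that $\mathcal{D}_{\mathbf{u}}(\alpha_j,x)=\mathcal{C}_1\cup\cdots\cup\mathcal{C}_j$. I induct on $j$. Assuming that $u_S\circ e(S,\cdot)$ is constant on $[0,1]$ for all $S\in\mathcal{C}_1\cup\cdots\cup\mathcal{C}_{j-1}$, the $\epsilon$-independence of the full multiset forces $u_S\circ e(S,z_\epsilon)\leq\alpha_j$ for every $\epsilon\in[0,1]$ and every $S\in\mathcal{C}_j\cup\cdots\cup\mathcal{C}_m$, with equality at $\epsilon=0$ exactly on $\mathcal{C}_j$; so by strict monotonicity no $S\in\mathcal{C}_j$ can have $d(S)<0$, and together with the inductive hypothesis $d(S)\geq 0$ for all $S\in\mathcal{D}_{\mathbf{u}}(\alpha_j,x)$. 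Since $x\in N^\ast_{\mathbf{u}}(v)$, Theorem~\ref{th:Kohlberg} makes $\mathcal{D}_{\mathbf{u}}(\alpha_j,x)$ a balanced set of coalitions, that is $\sum_{S\in\mathcal{D}_{\mathbf{u}}(\alpha_j,x)}\lambda_S\chi_S=\chi_N$ with all $\lambda_S>0$; pairing with $d$ gives $0=d(N)=\sum_S\lambda_S\,d(S)$ with every summand nonnegative, so $d(S)=0$ on $\mathcal{D}_{\mathbf{u}}(\alpha_j,x)$, in particular on $\mathcal{C}_j$, which also makes $u_S\circ e(S,\cdot)$ constant on $[0,1]$ there and closes the induction. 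After $m$ steps $d\equiv 0$ on $\mathcal{A}^\ast=\mathcal{D}_{\mathbf{u}}(\alpha_m,x)$, which finishes the contradiction.

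I expect the peeling induction to be the main obstacle. The delicate points are keeping track that the multiset of $\mathbf{u}$-excesses of the not-yet-processed layers stays independent of $\epsilon$ (so that its maximum is pinned at $\alpha_j$ along the whole segment, which is exactly what excludes $d(S)<0$ on $\mathcal{C}_j$), and using \emph{balanced set of coalitions} in the strict sense of admitting a balancing weight system with all weights strictly positive, as in the proof of Proposition~\ref{th:Katsev2} --- this strictness is what upgrades $\sum_S\lambda_S\,d(S)=0$ with $d(S)\geq 0$ to $d(S)=0$ for every $S$. The remaining steps, namely convexity of $X_{k^\ast}$ and the monotonicity bookkeeping in $\epsilon$, are routine.
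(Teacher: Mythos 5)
Your proof is correct, and while the easy direction (rank deficiency yields a whole line inside $N^\ast_{\mathbf{u}}(v)$) and the closing step (once $d(S)=0$ for all $S\in\mathcal{A}$, full rank forces $d=0$) coincide with the paper's, your argument for the substantive direction takes a genuinely different route. The paper argues directly: if $x,y\in N^\ast_{\mathbf{u}}(v)$ and $x(S)\neq y(S)$ for some $S\in\mathcal{A}^\ast$, then by strict monotonicity of $u_S$ the midpoint $\tfrac{x+y}{2}$ has $\mathbf{u}$-excess strictly below $\max\{u_S\circ e(S,x),\,u_S\circ e(S,y)\}$ exactly on the coalitions where $x$ and $y$ differ and never above it elsewhere, so $E_{\mathbf{u}}(\tfrac{x+y}{2})<_L E_{\mathbf{u}}(x)$, contradicting lexicographic minimality; this uses nothing beyond Definition \ref{def:upren} and an elementary comparison of sorted vectors. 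You instead go through Theorem \ref{thm:charupren} (convexity of $X_{k^\ast}$), the constancy of the sorted $\mathbf{u}$-excess vector along the segment, and Theorem \ref{th:Kohlberg}: peeling excess levels from the top, the constancy of the residual multiset rules out $d(S)<0$ on each level $\mathcal{C}_j$, and the strictly positive balancing weights of $\mathcal{D}_{\mathbf{u}}(\alpha_j,x)$ paired with $d(N)=0$ upgrade $d\geq 0$ to $d=0$ level by level. Your route is heavier machinery --- it imports Kohlberg's characterization, which the paper itself only cites from Maschler--Potters--Tijs, and it needs balancing weights to be strictly positive (a point on which the paper's definition of balanced set systems, with $\lambda_S\in\mathbb{R}_+$, is ambiguous, though the paper's own proof of Proposition \ref{th:Katsev2} does use $\lambda_S>0$) --- but in exchange it is fully explicit where the paper's lexicographic comparison of the midpoint's sorted vector is rather terse. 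Both arguments are valid.
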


\begin{proof}
Only if: Let $x \in N_{\mathbf{u}}^\ast (v)$ be the only element of the $\mathbf{u}$-prenucleolus. Suppose for contradiction that rank$(X(\mathcal{A})) < |N|$. Consider the following system of linear equations:

\begin{equation}\label{eq:Katsev_onlyif}
\begin{cases}
y(S)  = x(S), \ \forall S\in \mathcal{A}^* \\
y(N)  = v(N)
\end{cases}
\end{equation}

\noindent  Then \eqref{eq:Katsev_onlyif} can be rewritten as follows:

\begin{equation*}
X(\mathcal{A})y = e(x),
\end{equation*}

\noindent where $e(x) = (0,\dots , x(S), \dots, v(N))^\top$, $S\in \mathcal{A}$.

Since 
 rank$(X(\mathcal{A})) < |N|$, the system \eqref{eq:Katsev_onlyif} has an infinite many solutions and all of those belong to the $\mathbf{u}$-prenucleolus, which is a contradiction.
 
\bigskip

If: Suppose for contradiction that rank$(X(\mathcal{A})) = |N|$ and there exist $x, y \in N_{\mathbf{u}}^\ast(v)$ such that $x \neq y$. This means that $E_{\mathbf{u}}(x) = E_{\mathbf{u}}(y)$.

Notice that for all $S\in \mathcal{A}^\ast$, such that $x(S) = y(S)$ we have the following:

\begin{equation*}
u_S \circ \left(v(S) - \frac{x+y}{2}(S)\right) = u_S\circ (v(S)-x(S)) =  u_S\circ (v(S)-y(S)).
\end{equation*}

If there exists a coalition $S\in \mathcal{A}^*$ such that $x(S) \neq y(S)$ (without loss of generality we can suppose that $x(S) > y(S)$), then 

\begin{equation*}
u_S\circ (v(S)-x(S)) < u_S \circ \left(v(S) - \frac{x+y}{2}(S)\right) < u_S\circ (v(S)-y(S)).
\end{equation*}

Then let $T_i$ be the first coalition  according to the order by vector $E_v^{\mathbf{u}} (\frac{x+y}{2})$ for which $x(T_i) \neq y(T_i)$. Then either  $u_{T_i} \circ (v(T_i) - \frac{x+y}{2}(T_i)) < u_{T_i}\circ (v(T_i)-y(T_i))$ or $u_{T_i} \circ (v(T_i) - \frac{x+y}{2}(T_i)) < u_{T_i}\circ (v(T_i)-x(T_i))$.

Since the $\mathbf{u}$-excesses are in non-increasing order in $E_v^\mathbf{u}$ we have that $E_v^{\mathbf{u}}(\frac{x+y}{2}) <_L E_v^{\mathbf{u}}(x) = E_v^{\mathbf{u}}(y)$. Therefore, $x(S) = y(S)$ for all $S\in \mathcal{A}$, and $y$ is a solution of the following system of linear equations:

\begin{equation}
\begin{cases}
z(S)  = x(S),  \forall S\in \mathcal{A}^* \\
z(N)  = v(N)
\end{cases}
\end{equation}

\noindent which can be rewritten as

\begin{equation*}
X(\mathcal{A})z = e(x).
\end{equation*}

Since $rank(X(\mathcal{A})) = |N|$, this system has a unique solution $y=x$, which is a contradiction.
\end{proof}

\section{The $\mathbf{u}$-essential coalitions}\label{sec:essential}

\citet{Huberman1980} showed that the so-called essential coalitions give a characterization set for the nucleolus of balanced TU-games. Since in case of balanced games, the nucleolus and the prenucleolus coincide, the essential coalitions also give a characterization set for the prenucleolus. First, consider the definition of essential coalitions used by \citet{Huberman1980}. 

\begin{definition}\label{def:ess}
Let $v \in \mathcal{G}^N$ be a game. Then, a coalition $S \in \mathcal{P}^\ast (N)$ is \textit{essential}, if either $|S| = 1$, or 

\begin{equation*}
v(S) > \max \limits_{\mathcal{B} \in \mathcal{D}_S} \sum_{T \in \mathcal{B}} v(T) .
\end{equation*}

Let $\mathcal{E}_v$ denote the class of essential coalitions of the $v$ game.
\end{definition}

Here is \citet{Huberman1980}'s theorem:

\begin{theorem}[\cite{Huberman1980}]\label{th:Huberman}
Let $v \in \mathcal{G}^N$ be a balanced game. Then  $ \mathcal{E}_v$ is a characterization set for the nucleolus, that is, the values $(v (S))_{S \in \mathcal{E}_v}$ determine the nucleolus of the game $v$.
\end{theorem}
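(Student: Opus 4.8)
The plan is to prove the slightly sharper statement that, for a balanced $v\in\mathcal{G}^N$, the nucleolus $N(v)$ equals the prenucleolus $N^\ast(v|_{\mathcal{A}})$ of the restricted game $v|_{\mathcal{A}}\in\mathcal{G}^{N,\mathcal{A}}$ with $\mathcal{A}=\mathcal{E}_v\cup\{\emptyset,N\}$; since $N^\ast(v|_{\mathcal{A}})$ depends only on the values $(v(S))_{S\in\mathcal{A}}$ (with $v(N)$ regarded as part of the data), this yields the theorem. First I would record the elementary facts: as $v$ is balanced, $\core(v)\neq\emptyset$, so $N(v)=N^\ast(v)$ is the single point of the classical prenucleolus and it lies in $\core(v)$; moreover every $x\in\core(v)$ also lies in $\core(v|_{\mathcal{A}})$, so $\core(v|_{\mathcal{A}})\neq\emptyset$, the prenucleolus $N^\ast(v|_{\mathcal{A}})$ is nonempty by Theorem \ref{th:Katsew} (all singletons are essential, hence $\mathcal{A}^\ast=\mathcal{E}_v$ is a balanced set of coalitions), and, by the usual argument, every element of $N^\ast(v|_{\mathcal{A}})$ lies in $\core(v|_{\mathcal{A}})$.

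The heart of the proof is a \emph{domination lemma}: for every $x$ with $x(N)=v(N)$ and $e(T,x)\le 0$ for all $T\in\mathcal{E}_v$, and every non-essential $S\in\mathcal{P}^\ast(N)$, there is a partition $\mathcal{B}\in\mathcal{D}_S$ with $\mathcal{B}\subseteq\mathcal{E}_v$ and $e(S,x)\le\max_{T\in\mathcal{B}}e(T,x)$. I would prove it in two steps. Step one, by induction on $|S|$: every non-essential $S$ admits a partition $\mathcal{B}\subseteq\mathcal{E}_v$ of $S$ with $\sum_{T\in\mathcal{B}}v(T)\ge v(S)$ — take a partition $\mathcal{B}_0\in\mathcal{D}_S$ witnessing $v(S)\le\sum_{T\in\mathcal{B}_0}v(T)$, keep the essential blocks, and refine each non-essential block via the inductive hypothesis. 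Step two: $e(S,x)=v(S)-x(S)\le\sum_{T\in\mathcal{B}}v(T)-\sum_{T\in\mathcal{B}}x(T)=\sum_{T\in\mathcal{B}}e(T,x)$, and since $|\mathcal{B}|\ge 2$ and each $e(T,x)\le 0$, this sum is at most $\max_{T\in\mathcal{B}}e(T,x)$. From the lemma I extract two consequences: such an $x$ has all excesses nonpositive, hence lies in $\core(v)$; and for every $\alpha$, if $\{S\in\mathcal{P}^\ast(N):e(S,x)\ge\alpha\}\neq\emptyset$ then it already contains an essential coalition.

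With the lemma the argument is a double use of the Kohlberg-type criterion, namely Theorem \ref{th:Kohlberg} taken with $\mathbf{u}$ the identity — once for $v|_{\mathcal{A}}$, and once for $v$ itself (for which, with $\mathcal{A}=\mathcal{P}(N)$, it specializes to the classical Kohlberg theorem stated in the text). Take any $x^{\mathcal{E}}\in N^\ast(v|_{\mathcal{A}})$; by the lemma $x^{\mathcal{E}}\in\core(v)$. Applying Theorem \ref{th:Kohlberg} to $v|_{\mathcal{A}}$, for every $\alpha$ the set $\{T\in\mathcal{E}_v:e(T,x^{\mathcal{E}})\ge\alpha\}$ is empty or balanced. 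Now fix $\alpha$ with $\mathcal{D}(\alpha):=\{S\in\mathcal{P}^\ast(N):e(S,x^{\mathcal{E}})\ge\alpha\}\neq\emptyset$. By the lemma $\mathcal{D}(\alpha)\cap\mathcal{E}_v\neq\emptyset$, hence it is balanced; since it is a subset of $\mathcal{D}(\alpha)$, assigning weight zero to the extra coalitions shows $\mathcal{D}(\alpha)$ is balanced as well. Thus $x^{\mathcal{E}}$ satisfies Kohlberg's criterion for $v$, so $x^{\mathcal{E}}\in N^\ast(v)=N(v)$; since $N(v)$ is a singleton and $x^{\mathcal{E}}$ was arbitrary, $N^\ast(v|_{\mathcal{A}})=N(v)$, which proves the theorem.

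I expect the one genuinely nontrivial point to be the second step of the domination lemma, i.e. passing from the crude bound $e(S,x)\le\sum_{T\in\mathcal{B}}e(T,x)$ to the usable bound $e(S,x)\le\max_{T\in\mathcal{B}}e(T,x)$: this is precisely where nonpositivity of all the relevant excesses — hence balancedness of $v$ — is used, and it is exactly why the statement fails for non-balanced games. Everything else is routine bookkeeping: checking that $v|_{\mathcal{A}}$ has nonempty core and nonempty prenucleolus, that the (pre)nucleolus of a game with nonempty core lies in that core, that a superset of a balanced collection is balanced, and that the available Kohlberg theorem covers both the restricted and the classical game.
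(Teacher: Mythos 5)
First, note that the paper itself does not prove Theorem \ref{th:Huberman} --- it is quoted from \citet{Huberman1980} --- so the natural comparison is with the paper's proof of the generalization, Theorem \ref{th:u-Huberman}. Your route is genuinely different from that one: the paper runs the lexicographic center algorithm and shows by induction that the feasibility sets $X_k$ and $Y_k$ coincide at every stage (via Lemmata \ref{lemma:rest1}--\ref{lemma:rest2} and Proposition \ref{prop:X1=Y1}), whereas you reduce everything to a single application of the Kohlberg criterion at an arbitrary point of the prenucleolus of the restricted game. Your domination lemma is essentially Lemma \ref{lemma:rest1} specialized to the identity utility, combined with the observation that a sum of nonpositive excesses is dominated by each of its summands; your step one is the standard refinement-by-induction argument. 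What your approach buys is brevity (no induction over the stages of the algorithm), at the price of having to verify the balancedness of every level set $\mathcal{D}(\alpha,x^{\mathcal{E}})$ at once.

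That verification is the one step that does not hold up as written. You claim that since $\mathcal{D}(\alpha)\cap\mathcal{E}_v$ is balanced and is a subset of $\mathcal{D}(\alpha)$, ``assigning weight zero to the extra coalitions shows $\mathcal{D}(\alpha)$ is balanced as well.'' In the Kohlberg criterion --- and in the paper's own usage, see the proof of Proposition \ref{th:Katsev2}, which relies on $0<\lambda_S$ for all members --- a balanced collection must carry a \emph{strictly positive} weight on each of its members, and a collection that merely contains a balanced subcollection need not be balanced: for $N=\{1,2,3\}$ the collection $\{\{1,2\},\{3\},\{1,3\}\}$ contains the balanced partition $\{\{1,2\},\{3\}\}$ but admits no balancing system with $\lambda_{\{1,3\}}>0$. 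So the zero-weight trick does not establish the hypothesis of the sufficiency direction of Theorem \ref{th:Kohlberg}. Fortunately the repair is already contained in your own lemma: for $x^{\mathcal{E}}$ in the core and a non-essential $S\in\mathcal{D}(\alpha)$ with essential partition $\mathcal{B}_S$, the inequality $e(S,x^{\mathcal{E}})\le\sum_{T\in\mathcal{B}_S}e(T,x^{\mathcal{E}})$ together with $e(T,x^{\mathcal{E}})\le 0$ gives $e(S,x^{\mathcal{E}})\le e(T_0,x^{\mathcal{E}})$ for \emph{every} $T_0\in\mathcal{B}_S$, so the whole partition $\mathcal{B}_S$ lies in $\mathcal{D}(\alpha)\cap\mathcal{E}_v$; since $\chi_S=\sum_{T\in\mathcal{B}_S}\chi_T$, one can transfer a small $\varepsilon>0$ of weight from the members of $\mathcal{B}_S$ onto $S$ in the balancing system of $\mathcal{D}(\alpha)\cap\mathcal{E}_v$, and doing this for every non-essential member of $\mathcal{D}(\alpha)$ yields strictly positive weights on all of $\mathcal{D}(\alpha)$. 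The same remark applies, harmlessly, to your earlier assertion that $\mathcal{E}_v$ is balanced because it contains all singletons: there, too, the extra coalitions need a small positive weight, obtained by the same perturbation. With these patches your argument is complete and correct.
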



When generalizing Huberman's theorem, we need to "redefine" the essential coalitions. 

\begin{definition}\label{def:u-ess}
Given a game $v \in \mathcal{G}^{N, \mathcal{A}}$ and utility function $\mathbf{u}$, a coalition $S \in \mathcal{A}^{\ast}$ is \textit{$\mathbf{u}$-essential}, if either $\mathcal{D}_S^{\mathcal{A}^{\ast}} = \emptyset $ or if $\exists x\in \mathbf{u}\text{-core}(v)$ such that

\begin{equation*}
u_S \circ e(S, x) > \max \limits_{\mathcal{B} \in \mathcal{D}_S^{\mathcal{A}^{\ast}}} \sum_{T \in \mathcal{B}} u_T \circ e(T, x) .
\end{equation*}

Let $\mathcal{E}_v^{\mathbf{u}}$ denote the class of $\mathbf{u}$-essential coalitions of the game $v$.
\end{definition}

Note, that Definition \ref{def:u-ess} uses $x$, while Definition \ref{def:ess} seemingly does not. However, if $\mathbf{u}$ is the identity function, $x$ is cancelled out from the inequality. Moreover, the idea of the proof of Huberman's theorem is that the excesses of the essential coalitions exceed the excesses of the other coalitions. 

Notice, that if $\mathbf{u}$ is the identity function and $\mathcal{A} = \mathcal{P}(N)$; then, the $\mathbf{u}$-essential coalitions are the essential coalitions, hence the $\mathbf{u}$-essential coalitions are generalizations of the essential coalitions.

We should also note, that the inclusion of $x$ in Definition \ref{def:u-ess} raises the question which $x$ must be considered. A straightforward option would be that every element of the preimputations, as it is stated in the definition of the $\mathbf{u}$-prenucleolus, have to be considered. However, it turns out that not all preimputations must be considered, in Definition \ref{def:u-ess} we choose the elements of the $\mathbf{u}$-core. What is  more, even a smaller set, the $\mathbf{u}$-least-core could be applied in the definition of $\mathbf{u}$-essential coalitions. 







\begin{example}
Consider the following game, which is a modification of the game examined in Example 3 in \citet{Solymosi2019}: $v(N) = 12, v(\{ 1, 2\} ) = v(\{ 3, 4\} ) = v(\{ 2, 3, 4\} ) = 6, v(\{ 1, 4\} ) = 4, v(\{ 4\} ) = 3, v(\{ 1, 2, 3\} ) = 9$ and for every other coalition $S\in \mathcal{P}(N)$ let $v(S) = 0$.

Let the utility function be the percapita-utility function, that is $u_S(t) = \frac{t}{|S|}$ for all $S\in \mathcal{P}^*(N)$.

The percapita-core coincides with the core, so $\mathbf{u}-$core$(v) = \{ (t, 6-t, 3, 3) \colon 1\leq t \leq 6 \}$.

The percapita-prenucleolus of $v$ is $(3, 3, 3, 3)$. The first iteration in the lexicographic center algorithm (see \eqref{LP1}) gives $t_1 = 0$ and 

\begin{equation*}
W_1 = \{ \{ 3\} ,  \{ 4\} , \{ 1, 2\} ,  \{ 3, 4\} , \{ 1, 2, 3\} ,  \{ 1, 2, 4\} \}.
\end{equation*}

The second iteration (see \eqref{LPk}) gives $t_2 = -1$ and  

\begin{equation*}
W_2 = \{  \{ 1\} ,  \{ 2\} ,  \{ 1, 3\} ,  \{ 1, 4\} ,  \{ 2, 3\} ,  \{ 2, 4\} ,  \{ 1, 3, 4\} ,  \{ 2, 3, 4\} \} .
\end{equation*}

Therefore, the $\mathbf{u}$-prenucleolus is $(3, 3, 3, 3)$.

However, if we consider only the essential coalitions in the calculation of the percapita-prenucleolus of the game, we get $(4 + \frac{1}{3}, 1 + \frac{2}{3}, 3, 3)$. The essential coalitions of $v$ are: 

\begin{equation*}
\mathcal{E}_v = \{ \{ 1 \} ,  \{ 2\} ,  \{ 3\} ,  \{ 4\} ,  \{ 1, 2\} ,  \{ 3, 4\} ,  \{ 1, 4\} ,  \{ 1, 2, 3\} \}.
\end{equation*}

The first iteration gives $t_1 = 0$ and

\begin{equation*}
W_1 = \{  \{ 3\} ,  \{ 4\} ,  \{ 1, 2\} ,  \{ 3, 4\} ,  \{ 1, 2, 3\}  \}.
\end{equation*}

The second iteration gives: $t_2 = -(1 + \frac{2}{3})$ and

\begin{equation*}
W_2 = \{ \{ 1\} , \{ 2\} , \{ 1, 4\} \}.
\end{equation*}

Therefore, the $\mathbf{u}$-prenucleolus of the game with restricted coalitions $\mathcal{E}_v$ is $(4 + \frac{1}{3}, 1 + \frac{2}{3}, 3, 3)$.

However, if we consider the $\mathbf{u}$-essential coalitions when calculating the percapita-prenucleolus, we get $(3, 3, 3, 3)$, which coincides with the percapita-prenucleolus. Indeed, the $\mathbf{u}$-essential coalitions are 

\begin{equation*}
\begin{split}
\mathcal{E}_v^{\mathbf{u}}(v) = \{ \{ 1\} , \{ 2\} , \{ 3\} , \{ 4\} , \{ 1, 2\} , \{ 1, 3\} , \{ 1, 4\} , \{ 2, 3\} , \{ 2, 4\} , \{ 3, 4\} ,\\
 \{ 1, 2, 3\} ,\{ 1, 3, 4\} , \{ 2, 3, 4\}  \} .
\end{split}
\end{equation*}

Notice, that not all coalitions are $\mathbf{u}$-essential.  Coalition $\{ 1, 2, 4\}$ is not $\mathbf{u}$-essential, since for all $x\in \mathbf{u}$-core$(v)$ $u_{\{ 1, 2, 4 \} } \circ e_v(\{ 1, 2, 4\} , x ) = \frac{0-9}{3} = -3$ and $u_{\{ 1, 2\} } \circ e_v(\{ 1, 2\} , x) + u_{\{ 4\} } \circ e_v(\{ 4\} , x) = \frac{6-6}{2} + 3-3 = 0$, hence, $u_{\{ 1, 2, 4 \} } \circ e_v(\{ 1, 2, 4\} , x ) < u_{\{ 1, 2\} } \circ e_v(\{ 1, 2\} , x) + u_{\{ 4\} } \circ e_v(\{ 4\} , x)$.

Considering only the $\mathbf{u}$-essential coalitions the first iteration gives $t_1 = 0$ and 

\begin{equation*}
W_1 = \{ \{ 3\} , \{ 4\} , \{ 1, 2\} , \{ 3, 4\} , \{ 1, 2, 3\} \}.
\end{equation*}

The second iteration gives $t_2 = -1$ and 

\begin{equation*}
W_2 = \{ \{ 1\} , \{ 2\} , \{ 1, 3\} , \{ 1, 4\} , \{ 2, 3\} , \{ 2, 4\} , \{ 1, 3, 4\} , \{ 2, 3, 4\} \}.
\end{equation*}

Therefore, the per-capita prenucleolus of the game with restricted coalitions $\mathcal{E}_v^{\mathbf{u}}$ is $(3, 3, 3, 3)$, which coincides with the percapita-prenucleolus of the original game.
\end{example}

In the following, we show that the $\mathbf{u}$-essential coalitions form a characterization set for the $\mathbf{u}$-prenucleolus in case of $\mathbf{u}$-balanced games. First, consider the following two optimization problems:

\begin{equation}\label{LP100}
\begin{array}{llr}
& t \to \min & \\
\textup{s.t.} & u_S \circ e(S, x) \leq t, & \ S \in \mathcal{A}^\ast \\
& x \in I^\ast (v) &  \\
& t \in R_{\mathbf{u}} & 
\end{array}
\end{equation}

\noindent and

\begin{equation}\label{LP101}
\begin{array}{llr}
& t \to \min & \\
\textup{s.t.} & u_S \circ e(S, x) \leq t, & \ S \in \mathcal{E}_v^\mathbf{u} \\
& x \in I^\ast (v) &  \\
& t \in R_{\mathbf{u}} & 
\end{array}
\end{equation}

Let $t_1$ be the optimum of problem \eqref{LP100} and $X_1$ be the set of optimal solutions of problem \eqref{LP100} except from t, that is, $X_1 = \{x\in I^\ast (v) \colon u_S \circ e_v (S,x) \leq t_1 \ \forall S\in \mathcal{A}^\ast \}$. Similarly, let $t_1'$ be the optimum of problem \eqref{LP101} and $X_1$ be the set of optimal solutions of problem \eqref{LP101} except from $t$, that is, $X_1' = \{x\in I^\ast (v) \colon u_S \circ e_v (S,x) \leq t_1 \ \forall S\in \mathcal{E}^\mathbf{u}_v \}$.
 

Next, we consider some lemmata which are needed for showing that $X_1 = X_1'$ (Proposition \ref{prop:X1=Y1}). Then, by these results we show that the $\mathbf{u}$-essential coalitions characterize the $\mathbf{u}$-prenucleolus of $\mathbf{u}$-balanced games (Theorem \ref{th:u-Huberman}). 

In order to help the reader in following the interdependence of the upcoming results, we have constructed the following graph:  

\bigskip

\begin{tikzpicture}
\definecolor{fekete}{rgb}{0,0,0};
\draw (0,0) node[draw] (A) {Lemma \ref{lemma:rest1}};
\draw (4,0) node[draw] (C) {Lemma \ref{lemma:monotone}};
\draw (8,0) node[draw] (B) {Lemma \ref{lemma:X(t')<=X(t")}};
\draw (8,-2) node[draw] (D) {Lemma \ref{lemma:convex}};
\draw[very thick,->] (B) -- (D);
\draw (4,-3) node[draw] (E) {Lemma \ref{lemma:X1=X1'}};
\draw[very thick,->] (C) -- (E);
\draw (4,-5) node[draw] (F) {Lemma \ref{lemma:t<t_1:X1=X1'}};
\draw[very thick,->] (E) -- (F);
\draw (4,-7) node[draw] (G) {Proposition \ref{prop:X1=Y1}};
\draw[very thick,->] (F) -- (G);
\draw (4,-9) node[draw] (I) {Theorem \ref{th:u-Huberman}};
\draw[very thick,->] (G) -- (I);
\draw (8,-7) node[draw] (H) {Lemma \ref{lemma:rest2}};
\draw[very thick,->] (H) -- (I);
\draw[very thick,->] (A) -- (E);
\draw[very thick,->] (A) -- (F);
\draw[very thick,->, bend right=45] (A) to (I);
\draw[very thick,->] (B) -- (F);
\draw[very thick,->] (D) -- (E);
\draw[very thick,->, bend right=90] (E) to (G);

\end{tikzpicture}

\begin{lemma}\label{lemma:rest1}
Let $S \in \mathcal{A}^\ast \setminus \mathcal{E}_v^{\mathbf{u}}$. Then for every $x \in \mathbf{u}\text{-core}(v)$ there exists $\mathcal{B}^\ast \in \mathcal{D}_S^{\mathcal{A}^{\ast}}$ such that $u_S \circ e(S, x) \leq \sum_{T \in \mathcal{B}^\ast} u_T \circ e(T, x)$  and $\mathcal{B}^\ast \subseteq \mathcal{E}_v^{\mathbf{u}}$.
\end{lemma}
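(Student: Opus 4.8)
The plan is to prove Lemma \ref{lemma:rest1} by a finite induction on the number of players in $S$, peeling off non-$\mathbf{u}$-essential coalitions one step at a time until every piece in the partition is $\mathbf{u}$-essential. The base case is when $S$ is such that $\mathcal{D}_S^{\mathcal{A}^\ast} = \emptyset$; but then $S$ would be $\mathbf{u}$-essential by definition, contradicting $S \in \mathcal{A}^\ast \setminus \mathcal{E}_v^{\mathbf{u}}$, so in the situation of the lemma $S$ always has at least one $\mathcal{A}^\ast$-partition. Fix $x \in \ucore(v)$. Since $S \notin \mathcal{E}_v^{\mathbf{u}}$, the defining inequality of $\mathbf{u}$-essentiality fails \emph{for this particular} $x$, i.e.
\begin{equation*}
u_S \circ e(S, x) \leq \max_{\mathcal{B} \in \mathcal{D}_S^{\mathcal{A}^\ast}} \sum_{T \in \mathcal{B}} u_T \circ e(T, x),
\end{equation*}
so there exists some $\mathcal{B}_0 \in \mathcal{D}_S^{\mathcal{A}^\ast}$ with $u_S \circ e(S,x) \leq \sum_{T \in \mathcal{B}_0} u_T \circ e(T,x)$. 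This $\mathcal{B}_0$ is our first candidate; each of its blocks $T$ has strictly fewer players than $S$.

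Next I would iterate. If every block of $\mathcal{B}_0$ is already $\mathbf{u}$-essential, we are done. Otherwise pick a block $T_0 \in \mathcal{B}_0$ with $T_0 \notin \mathcal{E}_v^{\mathbf{u}}$; in particular $\mathcal{D}_{T_0}^{\mathcal{A}^\ast} \neq \emptyset$, and as above there is $\mathcal{C}_0 \in \mathcal{D}_{T_0}^{\mathcal{A}^\ast}$ with $u_{T_0} \circ e(T_0, x) \leq \sum_{T \in \mathcal{C}_0} u_T \circ e(T, x)$. Replace $T_0$ in $\mathcal{B}_0$ by the blocks of $\mathcal{C}_0$ to obtain a new partition $\mathcal{B}_1 \in \mathcal{D}_S^{\mathcal{A}^\ast}$; by transitivity of $\leq$ and the fact that the other summands are unchanged,
\begin{equation*}
u_S \circ e(S,x) \leq \sum_{T \in \mathcal{B}_1} u_T \circ e(T, x).
\end{equation*}
Repeating this, I obtain a sequence of partitions $\mathcal{B}_0, \mathcal{B}_1, \mathcal{B}_2, \dots$, all in $\mathcal{D}_S^{\mathcal{A}^\ast}$ and all satisfying the displayed inequality with $S$ on the left. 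To see the process terminates, note each refinement step strictly increases the number of blocks (a block is split into at least two), and the number of blocks of any partition of $S$ is bounded by $|S|$; hence after finitely many steps we reach a partition $\mathcal{B}^\ast \in \mathcal{D}_S^{\mathcal{A}^\ast}$ all of whose blocks lie in $\mathcal{E}_v^{\mathbf{u}}$, and which satisfies $u_S \circ e(S,x) \leq \sum_{T \in \mathcal{B}^\ast} u_T \circ e(T,x)$. That is exactly the claim.

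The step I expect to be the only real subtlety is confirming that refining a feasible $\mathcal{A}^\ast$-partition by replacing one block with an $\mathcal{A}^\ast$-partition of that block again yields an element of $\mathcal{D}_S^{\mathcal{A}^\ast}$ — i.e.\ that the new collection is genuinely a partition of $S$ (it is, since we are partitioning one block further and leaving the rest untouched) whose blocks all lie in $\mathcal{A}^\ast$ (they do, since $\mathcal{C}_0 \subseteq \mathcal{A}^\ast$ and the untouched blocks were already in $\mathcal{A}^\ast$). One should also check that the termination bound is clean: since $|S| \leq |N|$ is finite and the block count strictly increases and is bounded by $|S|$, at most $|S| - 1$ refinement steps occur, so there is no issue of an infinite descent. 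Everything else is just transitivity of the inequality and the definition of $\mathbf{u}$-essentiality applied at each stage to the \emph{same} fixed $x \in \ucore(v)$, which is what makes the argument go through uniformly in $x$.
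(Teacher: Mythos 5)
Your proposal is correct and follows essentially the same route as the paper: both hinge on the same refinement step (replacing a non-$\mathbf{u}$-essential block $T$ of a witnessing partition by an $\mathcal{A}^\ast$-partition of $T$ that witnesses $T$'s non-essentiality at the same $x$, and chaining the inequalities), and both terminate via the bound on the number of blocks. The only difference is presentational: you iterate the refinement until it stops, while the paper picks a witnessing partition of maximal cardinality and derives a contradiction from a single refinement.
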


\begin{proof}
Since $S$ is not $\mathbf{u}$-essential, $\forall x\in \mathbf{u}\text{-core}(v)$ there exists $\mathcal{B} \in \mathcal{D}_S^{\mathcal{A}^{\ast}}$ such that $u_S \circ e(S, x) \leq \sum_{T \in \mathcal{B}^\ast} u_T \circ e(T, x)$ by definition. For all $x\in \mathbf{u}$-core$(v)$ let $\mathbf{B}(x) := \{ \mathcal{B} \in \mathcal{D}_S^{\mathcal{A}^{\ast}} \colon u_S \circ e(S, x) \leq \sum_{T \in \mathcal{B}} u_S \circ e(S, x) \}$ and let $\mathcal{B}^\ast \in \mathbf{B}(x)$ be such that for every partition $\mathcal{B} \in \mathbf{B}(x)$ it holds that $| \mathcal{B}^\ast | \geq | \mathcal{B} |$.

Indirectly assume that a coalition $T^\ast \in \mathcal{B}^\ast$ is not $\mathbf{u}$-essential. Since $T^*$ is not $\mathbf{u}$-essential by Definition \ref{def:u-ess} $\exists \mathcal{B}' \in\mathcal{D}_{T^\ast}^{\mathcal{A}^{\ast}}$ such that $u_{T^*} \circ e(T^\ast, x)  \leq \sum_{T' \in \mathcal{B}'} u_{T'} \circ e (T', x)$, therefore, 

\begin{equation*}
u_S \circ e(S, x)  \leq \sum_{T' \in (\mathcal{B}^\ast \setminus \{T^\ast\}) \cup \mathcal{B}'} u_{T'} \circ e (T', x),
\end{equation*}

\noindent and $| ( \mathcal{B}^\ast \setminus \{T\}) \cup \mathcal{B}' | > | \mathcal{B}^\ast |$, which is a contradiction.
\end{proof}

Next, we introduce the following notion: for a class of coalitions $\mathcal{S}\subseteq \mathcal{A}^*$ and $t\in \mathbb{R}$ let $X(\mathcal{S}, t):= \{ x\in I^\ast(v) \colon u_S\circ e_v(S, x) \leq t, \forall S\in \mathcal{S} \} $.

\begin{lemma}\label{lemma:X(t')<=X(t")}
Given a game $v\in \mathcal{G}^{N, \mathcal{A}}$ and $t', t'' \in \mathbb{R}$ such that $t'\leq t''$, then $X(\mathcal{S}, t') \subseteq X(\mathcal{S}, t'')$.
\end{lemma}

\begin{proof}
For any $x\in X(\mathcal{S}, t')$ it holds that $x\in I^*(v)$, and for all $S\in \mathcal{S}$:
\begin{equation*}
u_S\circ e_v(S, x) \leq t' \leq t'' \ .
\end{equation*}
Therefore, $x\in X(\mathcal{S}, t'')$.
\end{proof}

\begin{lemma}\label{lemma:convex}
Given a utility function $\mathbf{u}$ and a $\mathbf{u}$-balanced game $v\in \mathcal{G}^{N, \mathcal{A}}$, for every $t_1\leq t$ it holds that both $X(\mathcal{A}^\ast, t)$ and $X(\mathcal{E}_v^{\mathbf{u}}, t)$ are nonempty, convex and closed.
\end{lemma}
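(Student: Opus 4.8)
The plan is to verify the three properties --- nonemptiness, convexity, closedness --- for each of the two sets $X(\mathcal{A}^\ast, t)$ and $X(\mathcal{E}_v^{\mathbf{u}}, t)$, separately where needed but mostly in parallel. First I would settle nonemptiness. Since $v$ is $\mathbf{u}$-balanced, by the Bondareva--Shapley-type theorem proved earlier the $\ucore(v)$ is nonempty; pick $x_0 \in \ucore(v)$. By definition of the $\ucore$, $u_S \circ e(S, x_0) \leq 0$ for all $S \in \mathcal{A}^\ast$. It therefore suffices to show $t_1 \leq 0$, because then $t \geq t_1$ together with $t_1 \leq 0$ gives $u_S \circ e(S,x_0) \leq 0 \leq t$ for all $S$, so $x_0 \in X(\mathcal{A}^\ast, t) \subseteq X(\mathcal{E}_v^{\mathbf{u}}, t)$ (the inclusion because $\mathcal{E}_v^{\mathbf{u}} \subseteq \mathcal{A}^\ast$, so fewer constraints). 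And indeed $t_1 \leq 0$ is immediate: $x_0$ is a feasible solution of problem \eqref{LP100} with $t = 0$, hence the optimum $t_1$ of \eqref{LP100} is at most $0$. (I should double-check that $t_1$ exists, i.e.\ that \eqref{LP100} attains its optimum --- this follows from $\mathbf{u}$-balancedness of $v$: $\ucore(v)\neq\emptyset$ forces $\mathcal{A}^\ast$ to be balanced in the relevant case, and then Proposition \ref{th:Katsev2}'s argument, or directly Lemma \ref{lemma:lexi} reducing to the linear problem, gives attainment.)

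Next, closedness. For a fixed $S$, the map $x \mapsto u_S\circ e(S,x) = u_S(v(S) - x(S))$ is continuous because $x \mapsto x(S)$ is linear and $u_S$ is continuous by Definition \ref{def:uf}. Hence $\{x : u_S \circ e(S,x) \leq t\}$ is closed as the preimage of $(-\infty, t]$ under a continuous map; intersecting over $S \in \mathcal{A}^\ast$ (resp.\ $S \in \mathcal{E}_v^{\mathbf{u}}$) and with the closed affine set $I^\ast(v)$ keeps it closed. Convexity is the step that needs a little care, since $u_S$ is only assumed monotone increasing and continuous, not concave or convex. The point is that the sublevel set of $u_S \circ e(S, \cdot)$ at level $t$ equals $\{x : v(S) - x(S) \leq u_S^{-1}(t)\}$ when $t \in R_{\mathbf{u}}$ (using that $u_S$ is a continuous strictly increasing bijection onto $R_{\mathbf{u}}$), which is a halfspace $\{x : x(S) \geq v(S) - u_S^{-1}(t)\}$, hence convex; when $t \geq \sup R_{\mathbf{u}}$ the constraint is vacuous (all of $\mathbb{R}^N$), and $t < \inf R_{\mathbf{u}}$ cannot occur here since $t \geq t_1 \geq \inf R_{\mathbf{u}}$. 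So each constraint set is convex, and $X(\mathcal{A}^\ast, t)$, $X(\mathcal{E}_v^{\mathbf{u}}, t)$ are finite intersections of convex sets with the convex set $I^\ast(v)$, hence convex.

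The main obstacle --- and the only genuinely non-routine point --- is making sure the reduction to halfspaces is legitimate for the relevant range of $t$, i.e.\ that $t \in R_{\mathbf{u}}$ (or at least $t$ lies weakly above $\inf R_{\mathbf{u}}$) whenever $t_1 \leq t$; this is why $\mathbf{u}$-balancedness (equivalently $\ucore(v)\neq\emptyset$) is hypothesized rather than arbitrary $v$, and it is exactly what rules out the pathological case $t_1 < \inf R_{\mathbf{u}}$ where the sublevel set $\{x : u_S\circ e(S,x)\le t\}$ would be empty for some $S$ but not all, potentially breaking convexity of the intersection. Concretely I would argue: if $\ucore(v)\neq\emptyset$ then (as in the nonemptiness step) $t_1 \leq 0 \in R_{\mathbf{u}}$ or, in the case $R_{\mathbf{u}} \subseteq \mathbb{R}_-\setminus\{0\}$, one argues directly that every $x\in I^\ast(v)$ has all $\mathbf{u}$-excesses in $R_{\mathbf{u}}$ so the sets are again intersections of genuine halfspaces (or the whole space). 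Either way $u_S^{-1}(t)$ makes sense for every $S$ at every level $t\ge t_1$, and the halfspace description goes through. Once that is pinned down, assembling nonemptiness $+$ convexity $+$ closedness for both sets is immediate and the lemma follows.
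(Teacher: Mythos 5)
Your treatment of convexity and closedness is correct and is essentially the paper's own argument: each constraint set $\{x \colon u_S\circ e(S,x)\le t\}$ is rewritten as the closed halfspace $\{x \colon x(S)\ge v(S)-u_S^{-1}(t)\}$, and the two sets are finite intersections of such halfspaces with the hyperplane $I^\ast(v)$. Your extra care about whether $t\in R_{\mathbf{u}}$ is a point the paper glosses over, and your resolution is sound: once the optimum of \eqref{LP100} is attained at some $x^\ast$, we have $t_1=\max_{S}u_S\circ e(S,x^\ast)\in R_{\mathbf{u}}$, so every level $t\ge t_1$ either lies in $R_{\mathbf{u}}$ or makes the constraint vacuous.

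The nonemptiness step, however, contains a genuine logical error. You assert that $t\ge t_1$ together with $t_1\le 0$ gives $u_S\circ e(S,x_0)\le 0\le t$; but $t\ge t_1$ and $t_1\le 0$ do not imply $t\ge 0$. For $t\in[t_1,0)$ your witness $x_0\in\ucore(v)$ only satisfies $u_S\circ e(S,x_0)\le 0$, which does not place it in $X(\mathcal{A}^\ast,t)$, so the argument as written establishes nonemptiness only for $t\ge 0$. The repair is exactly the fact you relegate to a parenthetical: since the optimum $t_1$ of \eqref{LP100} is attained, there is an optimal $x^\ast$ with $u_S\circ e(S,x^\ast)\le t_1\le t$ for all $S\in\mathcal{A}^\ast$, i.e. $X(\mathcal{A}^\ast,t)\supseteq X(\mathcal{A}^\ast,t_1)\neq\emptyset$ by Lemma \ref{lemma:X(t')<=X(t")}; the set $X(\mathcal{A}^\ast,t_1)$ is the $\mathbf{u}$-least-core, and this is precisely how the paper argues. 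With that substitution, and with $X(\mathcal{A}^\ast,t)\subseteq X(\mathcal{E}_v^{\mathbf{u}},t)$ as you note, the lemma follows.
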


\begin{proof}
By Lemma \ref{lemma:X(t')<=X(t")} $X(\mathcal{A}^\ast, t) \supseteq X(\mathcal{A}^\ast, t_1) = \mathbf{u}\text{-least-core}(v)$. We know that $\mathbf{u}\text{-least-core}(v) \neq \emptyset$, and $X(\mathcal{A}^\ast, t) \subseteq X(\mathcal{E}_v^{\mathbf{u}}, t)$, because $\mathcal{E}_v^{\mathbf{u}} \subseteq \mathcal{A}^*$. Therefore, $X(\mathcal{E}_v^\mathbf{u}, t) \neq \emptyset$ as well.

Let $H_S := \{ x\in \mathbb{R}^N \colon u_S \circ e_v (S,x) \leq t \}$ for all $S\in\mathcal{A}^\ast$. Then $H_S = \{ x \in \mathbb{R}^N \colon e_v (S,x) \leq u_S^{-1}(t) \}$, hence $H_S$ is a closed half-space, therefore, it is convex and closed. $X_0 = I^*(v) = \{x\in \mathbb{R}^N \colon x(N) = v(N) \}$ is a hyperplane, therefore it is convex and closed. Finally,

\begin{equation*}
X(\mathcal{A}^*, t) = \{x\in I^*(v) \colon u_S \circ e_v (S,x) \leq t \ \forall S\in \mathcal{A}^* \} = \cap_{S\in \mathcal{A}^*}H_S\cap X_0 \ ,
\end{equation*}

\noindent hence, $X(\mathcal{A}^*, t)$ is an intersection of finitely many convex, closed sets; therefore, it is convex and closed.

Similarly, $X(\mathcal{E}_v^{\mathbf{u}}, t) = \cap_{S\in \mathcal{E}^{\mathbf{u}}_v}H_S\cap X_0$ is an intersection of finitely many convex, closed sets, hence, it is convex and closed.
\end{proof}

\begin{lemma}\label{lemma:monotone}
Given a utility function $\mathbf{u}$ and a game $v\in \mathcal{G}^{N, \mathcal{A}}$, take $x_1, x_2 \in \mathbb{R}^N$ and $S\in \mathcal{A}$. If $u_S\circ e_v(S, x_1) < u_S\circ e_v(S, x_2)$,  then for every $\lambda \in (0, 1)$ it holds that 
\begin{equation*}
u_S\circ e_v(S, x_1) < u_S\circ e_v(S, \lambda x_1 + (1-\lambda) x_2) < u_S\circ e_v(S, x_2).
\end{equation*}
\end{lemma}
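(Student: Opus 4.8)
The plan is to strip away the utility function at the start and at the end, reducing everything to the affine behaviour of the plain excess. Since $u_S$ is strictly monotone increasing, the hypothesis $u_S\circ e_v(S,x_1) < u_S\circ e_v(S,x_2)$ is equivalent to $e_v(S,x_1) < e_v(S,x_2)$, and likewise each of the two desired inequalities for the convex combination will follow from the corresponding inequality between the plain excesses once those are established. So the whole problem reduces to: if $e_v(S,x_1) < e_v(S,x_2)$ and $\lambda\in(0,1)$, then
\begin{equation*}
e_v(S,x_1) < e_v(S,\lambda x_1 + (1-\lambda)x_2) < e_v(S,x_2).
\end{equation*}

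The key observation is that the map $x\mapsto e_v(S,x) = v(S) - x(S) = v(S) - \sum_{i\in S} x_i$ is affine, indeed
\begin{equation*}
e_v(S,\lambda x_1 + (1-\lambda)x_2) = \lambda\,e_v(S,x_1) + (1-\lambda)\,e_v(S,x_2),
\end{equation*}
which I would verify by a one-line expansion of $(\lambda x_1 + (1-\lambda)x_2)(S)$. Then, writing $a := e_v(S,x_1)$ and $b := e_v(S,x_2)$ with $a < b$, the right-hand side is $\lambda a + (1-\lambda)b$, and a strict convex combination of two distinct reals lies strictly between them: $a = \lambda a + (1-\lambda)a < \lambda a + (1-\lambda)b < \lambda b + (1-\lambda)b = b$, using $0<\lambda<1$ and $a<b$ for the two strict steps.

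Finally I would apply $u_S$, which is strictly monotone increasing, to the chain $e_v(S,x_1) < e_v(S,\lambda x_1+(1-\lambda)x_2) < e_v(S,x_2)$ to recover exactly the claimed chain of inequalities for $u_S\circ e_v(S,\cdot)$. There is no real obstacle here; the only thing to be careful about is that a larger payoff to $S$ corresponds to a \emph{smaller} excess, but since we never need to invoke monotonicity in the payoff variable directly and only pass $u_S$ through at the very end, the direction bookkeeping is trivial. The argument uses only that $e_v(S,\cdot)$ is affine and that each $u_S$ is strictly increasing, so it requires no hypothesis on $\mathbf{u}$ beyond Definition \ref{def:uf} and no assumption relating $x_1,x_2$ to the $\mathbf{u}$-core or to feasibility.
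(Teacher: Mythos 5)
Your proposal is correct and follows essentially the same route as the paper: both rest on the affine identity $e_v(S,\lambda x_1+(1-\lambda)x_2)=\lambda e_v(S,x_1)+(1-\lambda)e_v(S,x_2)$ combined with the strict monotonicity of $u_S$. Your version merely spells out more explicitly the two steps the paper leaves implicit, namely translating the hypothesis into $e_v(S,x_1)<e_v(S,x_2)$ and the strict-betweenness of a proper convex combination.
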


\begin{proof}
Let $\lambda \in (0, 1)$, then 

\begin{equation*}
(\lambda x_1 + (1- \lambda) x_2)(S) = \lambda x_1(S) + (1- \lambda)x_2(S).
\end{equation*}
Therefore, 
\begin{equation*}
\begin{split}
e_v(S, \lambda x_1 + (1- \lambda) x_2) = v(S)-(\lambda x_1 + (1- \lambda) x_2)(S) \\
= \lambda v(S) + (1- \lambda) v(S) - (\lambda x_1(S) + (1- \lambda)x_2(S)) \\
= \lambda e_v(S, x_1) + (1- \lambda)e_v(S, x_2).
\end{split}
\end{equation*}

Since $u_S$ is strictly monotone increasing we have that
\begin{equation*}
u_S\circ e_v(S, x_1) < u_S\circ e_v(S, \lambda x_1 + (1-\lambda) x_2) < u_S\circ e_v(S, x_2).
\end{equation*}
\end{proof}

\begin{lemma}\label{lemma:X1=X1'}
Given a utility function $\mathbf{u}$ and a  game $v\in \mathcal{G}^{N, \mathcal{A}}$, if $v$ is $\mathbf{u}$-balanced, then $X(\mathcal{A}^\ast, t_1) = X(\mathcal{E}_v^{\mathbf{u}}, t_1)$.
\end{lemma}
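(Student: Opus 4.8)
The inclusion $X(\mathcal{A}^{\ast},t_1)\subseteq X(\mathcal{E}_v^{\mathbf{u}},t_1)$ is immediate, since $\mathcal{E}_v^{\mathbf{u}}\subseteq\mathcal{A}^{\ast}$ means the right-hand side is cut out by fewer constraints; the whole content is the reverse inclusion. To prove it I would fix $x\in X(\mathcal{E}_v^{\mathbf{u}},t_1)$ and an arbitrary $y\in X(\mathcal{A}^{\ast},t_1)$ (nonempty by Lemma~\ref{lemma:convex}), set $z_\lambda:=(1-\lambda)y+\lambda x$ for $\lambda\in[0,1]$, and show that the whole segment stays in $X(\mathcal{A}^{\ast},t_1)$; taking $\lambda=1$ then gives $x\in X(\mathcal{A}^{\ast},t_1)$. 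Two preliminary observations come first. (a) Since $v$ is $\mathbf{u}$-balanced, $\ucore(v)\neq\emptyset$ by the Bondareva--Shapley type theorem, and any point of the $\mathbf{u}$-core is feasible for \eqref{LP100} with objective value $\le 0$; hence $t_1\le 0$, and therefore $X(\mathcal{A}^{\ast},t_1)\subseteq\ucore(v)$. (b) By Lemmas~\ref{lemma:monotone} and~\ref{lemma:convex}, $X(\mathcal{A}^{\ast},t_1)$ and $X(\mathcal{E}_v^{\mathbf{u}},t_1)$ are convex and closed; as $y,x\in X(\mathcal{E}_v^{\mathbf{u}},t_1)$ (for $y$, use $\mathcal{E}_v^{\mathbf{u}}\subseteq\mathcal{A}^{\ast}$), all $z_\lambda$ lie in $X(\mathcal{E}_v^{\mathbf{u}},t_1)$, so $u_S\circ e(S,z_\lambda)\le t_1$ for every $S\in\mathcal{E}_v^{\mathbf{u}}$ and every $\lambda$.

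Next I would argue by contradiction: suppose $z_\lambda\notin X(\mathcal{A}^{\ast},t_1)$ for some $\lambda$. The set $\{\lambda\in[0,1]:z_\lambda\in X(\mathcal{A}^{\ast},t_1)\}$ is the preimage of a closed convex set under an affine map, hence a proper closed subinterval $[0,\mu]$ of $[0,1]$, so $\mu<1$, and $z_\mu\in X(\mathcal{A}^{\ast},t_1)\subseteq\ucore(v)$. For each $\lambda\in(\mu,1]$ some coalition violates the defining inequality of $X(\mathcal{A}^{\ast},t_1)$, and by (b) it cannot be $\mathbf{u}$-essential; since $\mathcal{A}^{\ast}\setminus\mathcal{E}_v^{\mathbf{u}}$ is finite, there are $S\in\mathcal{A}^{\ast}\setminus\mathcal{E}_v^{\mathbf{u}}$ and a sequence $\lambda_n\downarrow\mu$ with $u_S\circ e(S,z_{\lambda_n})>t_1$. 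Continuity of $\lambda\mapsto u_S\circ e(S,z_\lambda)$ together with $z_\mu\in X(\mathcal{A}^{\ast},t_1)$ forces $u_S\circ e(S,z_\mu)=t_1$. Now apply Lemma~\ref{lemma:rest1} at the $\mathbf{u}$-core point $z_\mu$: there is a partition $\mathcal{B}^{\ast}\in\mathcal{D}_S^{\mathcal{A}^{\ast}}$ of $S$ with $\mathcal{B}^{\ast}\subseteq\mathcal{E}_v^{\mathbf{u}}$ (so $|\mathcal{B}^{\ast}|\ge 2$) and $u_S\circ e(S,z_\mu)\le\sum_{T\in\mathcal{B}^{\ast}}u_T\circ e(T,z_\mu)$. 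Every summand is $\le t_1$ (as $z_\mu\in X(\mathcal{A}^{\ast},t_1)$), so $t_1\le|\mathcal{B}^{\ast}|\,t_1$; together with $|\mathcal{B}^{\ast}|\ge 2$ and $t_1\le 0$ this forces $t_1=0$, and then $u_S\circ e(S,z_\mu)=0$ and $u_T\circ e(T,z_\mu)=0$ for all $T\in\mathcal{B}^{\ast}$.

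Finally I would close the contradiction. For every $T\in\mathcal{B}^{\ast}$ and every $\lambda$ we have $u_T\circ e(T,z_\lambda)\le t_1=0=u_T\circ e(T,z_\mu)$, so strict monotonicity of $u_T$ gives $e(T,z_\lambda)\le e(T,z_\mu)$, i.e. $z_\lambda(T)\ge z_\mu(T)$. Summing over the partition $\mathcal{B}^{\ast}$ of $S$, $z_\lambda(S)=\sum_{T\in\mathcal{B}^{\ast}}z_\lambda(T)\ge\sum_{T\in\mathcal{B}^{\ast}}z_\mu(T)=z_\mu(S)$, hence $e(S,z_\lambda)\le e(S,z_\mu)=u_S^{-1}(0)$ and thus $u_S\circ e(S,z_\lambda)\le 0=t_1$ for \emph{every} $\lambda\in[0,1]$ --- contradicting $u_S\circ e(S,z_{\lambda_n})>t_1$. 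Therefore the segment never leaves $X(\mathcal{A}^{\ast},t_1)$, so $x=z_1\in X(\mathcal{A}^{\ast},t_1)$.

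The routine parts are the trivial inclusion, the segment setup, and preliminaries (a)--(b). The real obstacle --- and the reason $\mathbf{u}$-balancedness and the refined Lemma~\ref{lemma:rest1} are genuinely needed --- is the boundary case $t_1=0$, where $z_\mu$ may lie on the boundary of $\ucore(v)$, so one cannot simply perturb it. The point is that the constraints active at $z_\mu$ which come from a partition into $\mathbf{u}$-essential coalitions propagate along the \emph{whole} segment through the additivity $z_\lambda(S)=\sum_{T\in\mathcal{B}^{\ast}}z_\lambda(T)$ and the monotonicity of the $u_T$; getting this bookkeeping right, and making sure Lemma~\ref{lemma:rest1} is invoked at a genuine $\mathbf{u}$-core point, is where the care lies.
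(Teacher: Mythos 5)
Your proof is correct, and it reaches the same contradiction as the paper's but by a noticeably cleaner route. Both arguments move along a segment joining a point of $X(\mathcal{A}^{\ast},t_1)$ to the hypothetical bad point, locate the boundary crossing, invoke Lemma~\ref{lemma:rest1} at a $\ucore$ point, force $t_1=0$, and then use additivity over the partition together with strict monotonicity of the $u_T$. The difference is in the bookkeeping: the paper takes $x^{\ast}$ to be the \emph{metric projection} of the bad point onto $X(\mathcal{A}^{\ast},t_1)$, applies Lemma~\ref{lemma:rest1} there, and then needs a two-case analysis at the first crossing point $x^2$ (equality versus strict inequality in the partition bound), with the strict case killed off by the minimality of the distance ($x^2=x^{\ast}$). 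You instead start from an \emph{arbitrary} $y\in X(\mathcal{A}^{\ast},t_1)$ and define the crossing point $z_\mu$ as the exit point of the segment from the inside; closedness of $X(\mathcal{A}^{\ast},t_1)$ then gives $z_\mu\in X(\mathcal{A}^{\ast},t_1)\subseteq\ucore(v)$ for free, so Lemma~\ref{lemma:rest1} can be applied directly at $z_\mu$, where the chain $t_1=u_S\circ e(S,z_\mu)\le\sum_{T}u_T\circ e(T,z_\mu)\le|\mathcal{B}^{\ast}|t_1\le t_1$ collapses everything to $t_1=0$ and eliminates the paper's Case 2 entirely. This buys you a shorter argument with no projection and no case split; what it costs is nothing essential, since the pigeonhole/continuity step you use to extract the fixed non-essential coalition $S$ with $u_S\circ e(S,z_\mu)=t_1$ is elementary (and could even be replaced by noting that $\lambda\mapsto u_S\circ e(S,z_\lambda)$ is monotone, as $e(S,z_\lambda)$ is affine in $\lambda$). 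Two cosmetic remarks: the bound $|\mathcal{B}^{\ast}|\ge 2$ follows from $\mathcal{B}^{\ast}\in\mathcal{D}_S$ (partitions other than $\{S\}$), not from $\mathcal{B}^{\ast}\subseteq\mathcal{E}_v^{\mathbf{u}}$; and in observation (a) one should take the feasible $t$ to be $\max_{S}u_S\circ e(S,x)\in R_{\mathbf{u}}$ rather than $0$ itself, since $0\in R_{\mathbf{u}}$ is not guaranteed a priori.
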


\begin{proof}
Since $\mathcal{E}_v^{\mathbf{u}} \subseteq \mathcal{A}^\ast$, it holds that $X(\mathcal{A}^\ast, t_1) \subseteq X(\mathcal{E}_v^{\mathbf{u}}, t_1)$, .

Indirectly assume that $\exists x^{1}\in X(\mathcal{E}_v^{\mathbf{u}}, t_1) \setminus X(\mathcal{A}^\ast, t_1)$. This means that $\exists S \in \mathcal{A}^*$ such that

\begin{equation}\label{eq:e>t}
u_S \circ e_v(S, x^1) > t_1.
\end{equation}

Let $\mathcal{S}_{x^1} = \{ S\in \mathcal{A}^* \colon u_S\circ e_v(S, x^1) > t_1\}$. Then, for all $S\in \mathcal{S}_{x^1}$ it holds that $S\notin \mathcal{E}_v^{\mathbf{u}}$.

Let $x^\ast \in X(\mathcal{A}^\ast, t_1)$ be the closest point of set $X(\mathcal{A}^\ast, t_1)$ to point $x^1$. It is clear that such $x^\ast$ exists, since $X(\mathcal{A}^\ast, t_1)$ is nonempty and closed according to Lemma \ref{lemma:convex}. 


Since $X(\mathcal{E}_v^{\mathbf{u}}, t_1)$ is a convex set, for every $\lambda \in [0,1]$ it holds that $\lambda x^\ast + (1-\lambda) x^1 \in X(\mathcal{E}_v^{\mathbf{u}}, t_1)$.

By Lemma \ref{lemma:rest1} we have that for each $S\in \mathcal{S}_{x^1}$ $\exists \mathcal{B}^\ast_S\subseteq \mathcal{E}_v^{\mathbf{u}}$, $\mathcal{B}^\ast_S\in \mathcal{D}_{S^1}^{\mathcal{A}^\ast}$ such that 

\begin{equation}\label{eq:choice_of_B*}
u_{S}\circ e_v(S, x^\ast) \leq \sum_{T\in \mathcal{B}^*_S}u_T\circ e_v(T, x^\ast).
\end{equation}

Since $t_1 \leq 0$ and for all $x\in X(\mathcal{E}_v^{\mathbf{u}}, t_1)$ and $T\in \mathcal{E}_v^{\mathbf{u}}$ it holds that $u_T\circ e_v(T, x) \leq t_1$, therefore for all $S\in \mathcal{S}_{x^1}$

\begin{equation}\label{eq:e<=t}
u_{S}\circ e_v(S, x^\ast) \leq \sum_{T\in \mathcal{B}^\ast_S}u_T\circ e_v(T, x^\ast) \leq t_1 \ .
\end{equation}

By \eqref{eq:e>t}, \eqref{eq:e<=t} and the continuity of $u_S$, for every $S\in \mathcal{S}_{x^1}$ there exists $\lambda_S \in [0,1]$ such that

\begin{equation*}
u_S\circ e_v(S, \lambda_S x^\ast + (1-\lambda_S) x^1) = t_1 \ .
\end{equation*}

Let $S^1 \in \argmin_{S\in \mathcal{S}_{x^1}} || x^\ast - (\lambda_S x^\ast + (1-\lambda_S) x^1)) ||$, and let $x^2 = \lambda_{S^1} x^\ast + (1-\lambda_{S^1}) x^1$.

Then, $$u_{S^1}\circ e_v(S^1, x^2) \geq \sum_{T\in \mathcal{B}^*_{S^1}} u_T\circ e_v(T, x^2),$$ because $t_1 \leq 0$ and $u_T\circ e_v(T, x^2) \leq t_1$ for all $T\in \mathcal{B}^*_{S^1}$.

Then, there are two cases:

Case 1:
\begin{equation*}
u_{S^1}\circ e_v(S^1, x^2) = \sum_{T\in \mathcal{B}^\ast_{S^1}} u_T\circ e_v(T, x^2).
\end{equation*}
In this case, $t_1 = u_{S^1}\circ e_v(S^1, x^2) = \sum_{T\in \mathcal{B}^*_{S^1}} u_T\circ e_v(T, x^2) \leq |\mathcal{B}^*_{S^1}|t_1 \leq t_1$; therefore, $t_1 = 0.$

Then, $\sum_{T\in \mathcal{B}^*_{S^1}} u_T\circ e_v(T, x^2) = 0$, hence, for all $T\in \mathcal{B}^*_{S^1}$ it holds that $u_T\circ e_v(T, x^2) = 0$.

We know that $u_{S^1}\circ e_v(S^1, x^1) > t_1  \geq \sum_{T\in \mathcal{B}^*_{S^1}} u_T\circ e_v(T, x^1)$ and that $x^2(S^1) = \sum_{T\in \mathcal{B}^*_{S^1}}x^2(T)$ and  $x^1(S^1) = \sum_{T\in \mathcal{B}^*_{S^1}}x^1(T)$. Then, $x^1(S^1) < x^2(S^1)$, but then, $\exists T'\in \mathcal{B}^\ast_{S^1}$ such that $x^1(T') < x^2(T')$. However, $u_{T'}$ is a strictly monotone increasing function; hence, we have that $u_{T'}\circ e_v(T', x^1)>t_1$, which is a contradiction, because $x^1 \in X(\mathcal{E}_v^{\mathbf{u}}, t_1)$.

Case 2:
\begin{equation}\label{eq:2_>}
u_{S^1}\circ e_v(S^1, x^2) > \sum_{T\in \mathcal{B}^*_{S^1}} u_T\circ e_v(T, x^2) \ .
\end{equation}

By the choice of $x^2$ and Lemma \ref{lemma:monotone}, for all $S\in \mathcal{S}_{x^1}$
\begin{equation*}
u_S\circ e_v(S, x^2) \leq t_1 \ .
\end{equation*}
Since for all $S\in \mathcal{A}^*\setminus \mathcal{S}_{x_1}$ it holds that $u_S\circ e_v(S, x^1) \leq t_1$, by Lemma \ref{lemma:monotone}
\begin{equation*}
u_S\circ e_v(S, x^2) \leq t_1 \ .
\end{equation*}

This means that for all $S\in \mathcal{A}^*$ it holds that $u_S\circ e_v(S, x^2) \leq t_1$; hence, $x^2 \in X(\mathcal{A}^*, t_1)$.

Since $x^\ast \in X(\mathcal{A}^\ast, t_1)$ is the closest point of set $X(\mathcal{A}^\ast, t_1)$ to point $x^1$, we have that $x^2 = x^\ast$. However, then, \eqref{eq:2_>} contradicts \eqref{eq:choice_of_B*}.
\end{proof}

\begin{lemma}\label{lemma:t<t_1:X1=X1'}
Given a utility function $\mathbf{u}$, a $\mathbf{u}$-balanced game $v\in \mathcal{G}^{N, \mathcal{A}}$ and $t\in \mathbb{R}$, if $t<t_1$, then $X(\mathcal{A}^\ast, t) = X(\mathcal{E}_v^{\mathbf{u}}, t) = \emptyset$.
\end{lemma}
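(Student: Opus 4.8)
The plan is to prove the two equalities $X(\mathcal{A}^\ast, t)=\emptyset$ and $X(\mathcal{E}_v^{\mathbf{u}}, t)=\emptyset$ separately; since both turn out to be the empty set, they are in particular equal. The first is immediate from the definition of $t_1$ as the optimum of \eqref{LP100}: if some $x$ belonged to $X(\mathcal{A}^\ast, t)$ then $\max_{S\in\mathcal{A}^\ast} u_S\circ e_v(S,x)\le t<t_1$, which is impossible because $t_1$ is the least value attained by this maximum over $I^\ast(v)$. (The same inequality works even if one does not assume \eqref{LP100} attains its optimum, reading $t_1$ as the infimum.)

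For the second equality I would argue by contradiction, assuming $x\in X(\mathcal{E}_v^{\mathbf{u}}, t)$. The argument needs one preliminary fact, namely $t_1\le 0$: since $v$ is $\mathbf{u}$-balanced, the generalized Bondareva--Shapley theorem yields a point $y\in\ucore(v)$, so $y\in I^\ast(v)$ and $u_S\circ e_v(S,y)\le 0$ for all $S\in\mathcal{A}^\ast$, whence $t_1\le\max_{S\in\mathcal{A}^\ast} u_S\circ e_v(S,y)\le 0$; in particular $t<t_1\le 0$. Next I would lift $x$ into the $\mathbf{u}$-core: by Lemma \ref{lemma:X(t')<=X(t")} we have $x\in X(\mathcal{E}_v^{\mathbf{u}}, t)\subseteq X(\mathcal{E}_v^{\mathbf{u}}, t_1)$, by Lemma \ref{lemma:X1=X1'} (applicable since $v$ is $\mathbf{u}$-balanced) $X(\mathcal{E}_v^{\mathbf{u}}, t_1)=X(\mathcal{A}^\ast, t_1)$, and since $t_1\le 0$ this set is contained in $\ucore(v)$, so $x\in\ucore(v)$. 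Now for every non-essential coalition $S\in\mathcal{A}^\ast\setminus\mathcal{E}_v^{\mathbf{u}}$, Lemma \ref{lemma:rest1} provides a partition $\mathcal{B}^\ast\in\mathcal{D}_S^{\mathcal{A}^\ast}$ with $\mathcal{B}^\ast\subseteq\mathcal{E}_v^{\mathbf{u}}$ and $u_S\circ e_v(S,x)\le\sum_{T\in\mathcal{B}^\ast}u_T\circ e_v(T,x)$; each $T\in\mathcal{B}^\ast$ lies in $\mathcal{E}_v^{\mathbf{u}}$ and $x\in X(\mathcal{E}_v^{\mathbf{u}}, t)$, so every summand is at most $t$, giving $u_S\circ e_v(S,x)\le|\mathcal{B}^\ast|\,t\le t$ since $t\le 0$. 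Together with $u_S\circ e_v(S,x)\le t$ for the essential $S$, this shows $x\in X(\mathcal{A}^\ast, t)$, contradicting the first equality; hence $X(\mathcal{E}_v^{\mathbf{u}}, t)=\emptyset$.

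I expect the only genuine obstacle to be the circularity in applying Lemma \ref{lemma:rest1}: a point of $X(\mathcal{E}_v^{\mathbf{u}}, t)$ need not a priori lie in $\ucore(v)$, because it only satisfies the excess bound on the essential coalitions, so the lemma cannot be invoked on it directly. The resolution is exactly the ``lifting'' step above — routing $x$ through $X(\mathcal{A}^\ast, t_1)$ via Lemma \ref{lemma:X1=X1'} and then using $t_1\le 0$ to place it in the $\mathbf{u}$-core. A minor point to keep track of is that one must know $t\le 0$ (here in fact $t<0$) so that collapsing the $|\mathcal{B}^\ast|\ge 2$ summands to $t$ does not raise the sum above $t$; this is ensured by $t<t_1\le 0$.
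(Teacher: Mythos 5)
Your proof is correct and follows essentially the same route as the paper's: show $X(\mathcal{A}^\ast,t)=\emptyset$ directly from the optimality of $t_1$, lift any point of $X(\mathcal{E}_v^{\mathbf{u}},t)$ into the $\mathbf{u}$-core via Lemma \ref{lemma:X(t')<=X(t")}, Lemma \ref{lemma:X1=X1'} and $t_1\le 0$, and then use Lemma \ref{lemma:rest1} together with $t\le 0$ to force that point into $X(\mathcal{A}^\ast,t)$. Your explicit justification of $t_1\le 0$ via the generalized Bondareva--Shapley theorem is a detail the paper leaves implicit, but otherwise the arguments coincide.
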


\begin{proof}
By the definition of $t_1$, we have that $X(\mathcal{A}^\ast, t) = \emptyset$ and we know that $X(\mathcal{A}^\ast, t) \subseteq X(\mathcal{E}_v^{\mathbf{u}}, t)$.

By Lemma \ref{lemma:X(t')<=X(t")}, since $t<t_1$, we have that $X(\mathcal{E}_v^{\mathbf{u}}, t) \subseteq X(\mathcal{E}_v^{\mathbf{u}}, t_1)$. Furthermore, by Lemma \ref{lemma:X1=X1'} it holds that $X(\mathcal{E}_v^{\mathbf{u}}, t_1) = X(\mathcal{A}^\ast, t_1)$. Since $t_1\leq 0$, it holds that $X(\mathcal{A}^\ast, t_1) \subseteq X(\mathcal{A}^\ast, 0) = \mathbf{u}\text{-core}(v)$. Therefore,
\begin{equation*}
X(\mathcal{E}_v^{\mathbf{u}}, t) \subseteq X(\mathcal{E}_v^{\mathbf{u}}, t_1) = X(\mathcal{A}^*, t_1) \subseteq X(\mathcal{A}^*, 0) = \mathbf{u}\text{-core}(v) \ .
\end{equation*}

Then, for every $x\in X(\mathcal{E}_v^{\mathbf{u}}, t)$ and for every $S\in \mathcal{A}^\ast\setminus \mathcal{E}_v^{\mathbf{u}}$ by Lemma \ref{lemma:rest1} and by the non-positivity of $t$ it holds that $\exists \mathcal{B}\in \mathcal{D}_S^{\mathcal{A}^\ast}$, $\mathcal{B} \subseteq \mathcal{E}_v^{\mathbf{u}}$ such that

\begin{equation*}
u_S\circ e_v(S, x) \leq \sum_{T\in \mathcal{B}} u_T \circ e_v(T, x) \leq t \ .
\end{equation*}

Then, $x\in X(\mathcal{A}^\ast, t)$, that is, $X(\mathcal{E}_v^{\mathbf{u}}, t) \subseteq X(\mathcal{A}^\ast, t)$. Summing up, we can can conclude that $X(\mathcal{E}_v^{\mathbf{u}}, t) = X(\mathcal{A}^\ast, t) = \emptyset$.
\end{proof}

\begin{remark}\label{rem:szukmag}
Notice, that all the results we have discussed so far hold if one defines $\mathbf{u}$-essentiality by the elements of the $\mathbf{u}$-least core instead of the elements of the $\mathbf{u}$-core.
\end{remark}

The following proposition is a consequence of Lemmata \ref{lemma:X1=X1'} and \ref{lemma:t<t_1:X1=X1'}.

\begin{proposition}\label{prop:X1=Y1}
The following holds: $t_1 = t_1'$ and $X_1 = X_1'$.
\end{proposition}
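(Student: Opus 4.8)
The plan is to read both equalities directly off Lemmata \ref{lemma:X1=X1'} and \ref{lemma:t<t_1:X1=X1'}, using nothing more than the trivial inclusion $\mathcal{E}_v^{\mathbf{u}} \subseteq \mathcal{A}^\ast$. I work under the standing hypotheses of this section: $v$ is $\mathbf{u}$-balanced (so those two lemmata apply) and $\mathcal{A}^\ast$ is a balanced set of coalitions, equivalently $N^\ast_{\mathbf{u}}(v) \neq \emptyset$ by Theorem \ref{th:Katsew}; the latter is what guarantees that the optimum $t_1$ of \eqref{LP100} is attained, so that $X_1 = X(\mathcal{A}^\ast, t_1) \neq \emptyset$. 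Recall also that $t_1 \leq 0$, since $\mathbf{u}\text{-core}(v) = X(\mathcal{A}^\ast, 0)$ is nonempty for a $\mathbf{u}$-balanced game.

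First I would establish $t_1' \leq t_1$. Since $\mathcal{E}_v^{\mathbf{u}} \subseteq \mathcal{A}^\ast$, the constraints of \eqref{LP101} form a subset of those of \eqref{LP100}, so every feasible point of \eqref{LP100} is feasible for \eqref{LP101}; taking any $x \in X_1$, the pair $(t_1, x)$ is feasible for \eqref{LP101} with $t_1 \in R_{\mathbf{u}}$, whence $t_1' \leq t_1$. In particular $X_1 \subseteq X(\mathcal{E}_v^{\mathbf{u}}, t_1)$, so $X(\mathcal{E}_v^{\mathbf{u}}, t_1) \neq \emptyset$. For the reverse inequality I would use Lemma \ref{lemma:t<t_1:X1=X1'}: it shows that for every $t < t_1$ there is no $x$ with $(t,x)$ feasible for \eqref{LP101}, since such an $x$ would lie in $X(\mathcal{E}_v^{\mathbf{u}}, t) = \emptyset$. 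Hence the feasible region of \eqref{LP101} lies in $\{t \geq t_1\}$ and is nonempty (it contains $(t_1,x)$ for $x \in X_1$), so its optimum $t_1'$ is attained and satisfies $t_1' \geq t_1$. Combining the two inequalities gives $t_1 = t_1'$.

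For the set equality, $X_1' = X(\mathcal{E}_v^{\mathbf{u}}, t_1') = X(\mathcal{E}_v^{\mathbf{u}}, t_1)$ because $t_1' = t_1$, and $X(\mathcal{E}_v^{\mathbf{u}}, t_1) = X(\mathcal{A}^\ast, t_1) = X_1$ by Lemma \ref{lemma:X1=X1'}; this also makes the slightly loose definition of $X_1'$ in the preceding paragraph harmless. I do not anticipate a genuine obstacle: the real content has already been spent in Lemmata \ref{lemma:X1=X1'} and \ref{lemma:t<t_1:X1=X1'}. The only points needing care are the attainment bookkeeping — why $X_1 \neq \emptyset$, which is why balancedness of $\mathcal{A}^\ast$ is invoked at the outset — and keeping the direction of the inequality straight, so that Lemma \ref{lemma:t<t_1:X1=X1'} is applied on the $t_1' \geq t_1$ side and Lemma \ref{lemma:X1=X1'} on the set side.
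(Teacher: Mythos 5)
Your proof is correct and follows essentially the same route as the paper's: both establish $t_1' \leq t_1$ from the inclusion $\mathcal{E}_v^{\mathbf{u}} \subseteq \mathcal{A}^\ast$, rule out $t_1' < t_1$ via Lemma \ref{lemma:t<t_1:X1=X1'}, and then read off $X_1' = X(\mathcal{E}_v^{\mathbf{u}}, t_1) = X(\mathcal{A}^\ast, t_1) = X_1$ from Lemma \ref{lemma:X1=X1'}. Your version merely makes explicit the attainment and nonemptiness bookkeeping that the paper leaves implicit.
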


\begin{proof}
By definition $t_1' = \min \{ t\colon X(\mathcal{E}_v^{\mathbf{u}}, t) \neq \emptyset \}$. We know, that $t_1'\leq t_1$; therefore, by Lemmata \ref{lemma:X1=X1'} and \ref{lemma:t<t_1:X1=X1'} we have that $X_1' = X(\mathcal{E}_v^{\mathbf{u}}, t_1') =  X(\mathcal{A}^\ast, t_1')$. However,  $X(\mathcal{A}^\ast, t_1') \neq \emptyset$ if and only if $t_1'\geq t_1$, hence $t_1 = t_1'$ and $X_1 = X_1'$.
\end{proof}

\begin{lemma}\label{lemma:rest2}
Let $k$ be a positive integer, $S \in \mathcal{A}^\ast  \setminus \cup_{r=1}^{k-1} W_r$ be such that $\mathcal{D}_S^{\mathcal{A}^{\ast}} \neq \emptyset$, and $\mathcal{B}^\ast \in \mathcal{D}_S^{\mathcal{A}^{\ast}}$. Then, there exists a coalition $T^\ast \in \mathcal{B}^\ast$ such that $T^\ast \notin \cup_{r=1}^{k-1} W_r$.
\end{lemma}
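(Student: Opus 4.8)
The plan is to argue by contradiction. The first step is a harmless reduction: the sets produced by the algorithm are nested, $X_1 \supseteq X_2 \supseteq \cdots$, so if $u_T \circ e(T, \cdot)$ is constant on some $X_r$ then it is automatically constant on every $X_{r'}$ with $r' \geq r$; hence $W_1 \subseteq W_2 \subseteq \cdots$ and $\cup_{r=1}^{k-1} W_r = W_{k-1}$ for $k \geq 2$ (the case $k = 1$ is trivial, since the union is then empty and $\mathcal{B}^\ast$, being a partition of the nonempty set $S$, is nonempty). So it suffices to prove: if $S \notin W_{k-1}$ and $\mathcal{B}^\ast \in \mathcal{D}_S^{\mathcal{A}^\ast}$, then some $T^\ast \in \mathcal{B}^\ast$ satisfies $T^\ast \notin W_{k-1}$.

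Assume, for contradiction, that every $T \in \mathcal{B}^\ast$ lies in $W_{k-1}$. By the definition of $W_{k-1}$, for each such $T$ there is a constant $c_T$ with $u_T \circ e(T, x) = c_T$ for all $x \in X_{k-1}$. Since $u_T$ is strictly monotone increasing it is injective, so $e(T, x) = u_T^{-1}(c_T)$ is constant on $X_{k-1}$; equivalently, $x(T) = v(T) - u_T^{-1}(c_T)$ takes the same value for all $x \in X_{k-1}$.

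Now I would invoke the fact that $\mathcal{B}^\ast$ partitions $S$: for every $x \in X_{k-1}$ we have $x(S) = \sum_{T \in \mathcal{B}^\ast} x(T)$, a finite sum of quantities each constant on $X_{k-1}$. Hence $x(S)$, and therefore $e(S, x) = v(S) - x(S)$ and $u_S \circ e(S, x)$, is constant on $X_{k-1}$. This means $S \in W_{k-1} = \cup_{r=1}^{k-1} W_r$, contradicting the hypothesis $S \in \mathcal{A}^\ast \setminus \cup_{r=1}^{k-1} W_r$. Thus some $T^\ast \in \mathcal{B}^\ast$ must avoid $\cup_{r=1}^{k-1} W_r$, as claimed.

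I do not anticipate a serious obstacle: the argument is essentially the observation that being ``fixed'' propagates across a partition through additivity of $x(\cdot)$, combined with the invertibility of each $u_S$. The only points requiring a line of care are the monotonicity of the family $(W_r)$, which rests on the nesting $X_r \supseteq X_{r+1}$ noted right after the description of the algorithm, the trivial boundary case $k=1$, and the remark that every element of $\mathcal{B}^\ast$ genuinely belongs to $\mathcal{A}^\ast$ (so that ``$T \in W_{k-1}$'' is meaningful for it), which holds because $\mathcal{B}^\ast \in \mathcal{D}_S^{\mathcal{A}^\ast}$.
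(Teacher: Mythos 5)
Your proof is correct and follows essentially the same route as the paper's: assume every block of $\mathcal{B}^\ast$ is fixed on $X_{k-1}$, use invertibility of the $u_T$'s and additivity of $x(\cdot)$ over the partition to conclude $u_S\circ e(S,\cdot)$ is constant on $X_{k-1}$, contradicting $S\notin\cup_{r=1}^{k-1}W_r$. The only cosmetic difference is that you make the monotonicity $W_1\subseteq W_2\subseteq\cdots$ (via $X_r\supseteq X_{r+1}$) explicit, which the paper leaves implicit.
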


\begin{proof}
If $k = 1$, then $ \cup_{r=1}^{k-1} W_r = \emptyset$, hence, $T^\ast \notin \cup_{r=1}^{k-1} W_r$. 

\noindent If $k \geq 2$, then indirectly assume that $\mathcal{B}^\ast \subseteq \cup_{r=1}^{k-1}W_r$. Then for every $x \in X_{k-1}$

\begin{equation*}
\begin{split}
u_S \circ e(S, x) = u_S(v(S) - x(S)) = u_S \left( v(S) -  \sum_{T \in \mathcal{B}^\ast} x (T) \right) \\
= u_S \left( v(S) - \sum \limits_{T \in \mathcal{B}^\ast} (v(T) - u_T^{-1}(c_T)) \right) \ .
\end{split}
\end{equation*}

\noindent Therefore, for each $x,x' \in X_{k-1}$ it holds that $u_S \circ e(S,x) = u_S \circ e(S,x')$, meaning that $S \in \cup_{r=0}^{k-1}W_r$, which is a contradiction.
\end{proof}

The following theorem generalizes   \citet{Huberman1980}'s theorem (Theorem 7 on page 420 of \cite{Huberman1980}):

\begin{theorem}\label{th:u-Huberman}
Consider a $\mathbf{u}$-balanced game $v \in \mathcal{G}^{N,\mathcal{A}}$, and let

\begin{equation*}
Y_1 = \{ x \in I^\ast (v) \colon u_S \circ e (S,x) \leq t_1, \  \forall S \in \mathcal{E}_v^{\mathbf{u}} \} ,
\end{equation*}

\noindent and for all $k \geq 2$ let $Y_k$ be defined as follows: 

\begin{equation*}
Y_k = \{ x \in X_{k-1} \colon u_S \circ e(S,x) \leq t_k, \ \forall S \in \mathcal{E}_v^{\mathbf{u}} \setminus (\cup_{r=1}^{k-1} W_r) \} .
\end{equation*}

\noindent Then, $X_k = Y_k$ for all $k\geq 1$.
\end{theorem}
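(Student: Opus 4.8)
The plan is to prove $X_k = Y_k$ by induction on $k$, using the lexicographic center iterations as the scaffolding. The base case $k=1$ is exactly Proposition~\ref{prop:X1=Y1}, which already gives $t_1 = t_1'$ and $X_1 = X_1' = Y_1$ (note $Y_1 = X_1'$ by the definition of $X_1'$). So assume $X_{r} = Y_{r}$ for all $r \le k-1$; in particular the sets $W_1, \dots, W_{k-1}$ and the constants $c_T$, $T \in \cup_{r=1}^{k-1} W_r$, are the same whether computed from the full system or from the restricted system on $\mathcal{E}_v^{\mathbf{u}}$. I also need that the $W_r$ computed in the restricted process agree with those in the full process; this follows once I know $X_r = Y_r$, since $W_r$ is determined by which $u_S \circ e(S,\cdot)$ is constant on $X_r = Y_r$ — but one must be slightly careful, because in the restricted process one only ever looks at $S \in \mathcal{E}_v^{\mathbf{u}}$ when deciding the next $t$, whereas $W_r$ ranges over all of $\mathcal{A}^\ast$. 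I would resolve this by showing that for $S \in \mathcal{A}^\ast \setminus \mathcal{E}_v^{\mathbf{u}}$, the value $u_S \circ e(S,x)$ is already pinned down by the values on a sub-partition $\mathcal{B}^\ast \subseteq \mathcal{E}_v^{\mathbf{u}}$ (Lemma~\ref{lemma:rest1}) as soon as each $T \in \mathcal{B}^\ast$ has entered some $W_r$, so non-essential coalitions never control the iteration and their membership in $\cup W_r$ is forced once their sub-partition's members are all fixed.

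\textbf{Key steps, in order.} First I would record the inductive hypothesis $X_{k-1} = Y_{k-1}$ and observe that the $(k-1)$-st step of both processes therefore produces the same $t_{k-1}$, the same $W_r$'s for $r \le k-1$, and the same constants $c_T$. Second, I would show $X_k \subseteq Y_k$: this is immediate, since $\mathcal{E}_v^{\mathbf{u}} \setminus (\cup_{r=1}^{k-1}W_r) \subseteq \mathcal{A}^\ast \setminus (\cup_{r=1}^{k-1}W_r)$, so any $x \in X_k$ satisfies the (fewer) constraints defining $Y_k$, and also $X_{k-1} = Y_{k-1}$ contains $x$. Third — the substantive direction — I would show $Y_k \subseteq X_k$. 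Take $x \in Y_k$; I must show $u_S \circ e(S,x) \le t_k$ for \emph{every} $S \in \mathcal{A}^\ast \setminus (\cup_{r=1}^{k-1}W_r)$, not just the essential ones. For such $S$, if $\mathcal{D}_S^{\mathcal{A}^\ast} = \emptyset$ then $S \in \mathcal{E}_v^{\mathbf{u}}$ and there is nothing to prove; otherwise apply Lemma~\ref{lemma:rest1} at a point $x^\ast$ of the $\mathbf{u}$-core — but here I need the bound to hold at $x$, not merely at some core point. The right tool is to combine Lemma~\ref{lemma:rest1} (which gives a sub-partition $\mathcal{B}^\ast \subseteq \mathcal{E}_v^{\mathbf{u}}$ with $u_S \circ e(S, \cdot)$ dominated by $\sum_{T \in \mathcal{B}^\ast} u_T \circ e(T,\cdot)$ on the core) with Lemma~\ref{lemma:rest2} (which guarantees $\mathcal{B}^\ast$ meets $\mathcal{A}^\ast \setminus \cup_{r=1}^{k-1} W_r$, i.e.\ is not yet fully "frozen"). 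Then split $\mathcal{B}^\ast$ into the part inside $\cup_{r=1}^{k-1}W_r$ (on which each $u_T \circ e(T,x) = c_T \le t_{k-1}$, in fact $\le t_{j}$ for the relevant earlier step, and crucially $\le 0$ since $t_1 \le 0$) and the part in $\mathcal{E}_v^{\mathbf{u}} \setminus \cup_{r=1}^{k-1}W_r$ (on which $u_T \circ e(T,x) \le t_k$ because $x \in Y_k$). Using $t_k \le t_{k-1} \le \dots \le t_1 \le 0$ and the fact that these quantities are affine in $x$ through the $x(T)$'s summing to $x(S)$, one gets $u_S \circ e(S,x) \le \sum_{T \in \mathcal{B}^\ast} u_T \circ e(T,x) \le t_k$, with the first inequality valid at $x$ by the affine-domination argument already used in the proof of Lemma~\ref{lemma:X1=X1'} (since $t_k \le 0$ makes the sum over several nonpositive terms at most one such term). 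Having shown the constraint holds at $x$ for all relevant $S$, and noting $x \in X_{k-1} = Y_{k-1}$, we get $x \in X_k$, hence $Y_k = X_k$; this also forces $t_k' = t_k$ and the equality of the $W_k$'s, closing the induction.

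\textbf{Main obstacle.} The delicate point is Step~3: transferring Lemma~\ref{lemma:rest1}'s domination inequality — which is stated only for $\mathbf{u}$-core points — to an arbitrary $x \in Y_k$, and simultaneously handling that $\mathcal{B}^\ast$ may straddle frozen and unfrozen coalitions. The resolution is that $u_S \circ e(S,x) \le \sum_{T \in \mathcal{B}^\ast} u_T \circ e(T,x)$ holds at \emph{every} $x$ with $x(N) = v(N)$ whenever the partition analogue holds and all the $u_T \circ e(T, x)$ are $\le 0$: indeed $e(S,x) = \sum_{T} e(T,x)$, so $u_S^{-1}$ applied to both sides reduces everything to the inequality $\mathbf{u}^{-1}_S(0) \ge \sum_T \mathbf{u}^{-1}_T(0)$-type relation underlying $\mathbf{u}$-essentiality, which is point-independent; this is essentially the mechanism already exploited in Lemma~\ref{lemma:X1=X1'} and Lemma~\ref{lemma:rest2}. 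I expect the bookkeeping — making the "frozen part of $\mathcal{B}^\ast$ contributes a nonpositive constant" argument airtight when some $c_T$ came from step $j < k-1$ — to be the fussiest part, but it is routine once the $t_j \le 0$ monotonicity and the affinity of $e(\cdot, x)$ in $x$ are in hand.
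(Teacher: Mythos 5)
Your proposal is correct and follows essentially the same route as the paper: apply Lemma \ref{lemma:rest1} at the point itself (legitimate because $Y_k\subseteq X_{k-1}\subseteq X_1\subseteq \mathbf{u}\text{-core}(v)$), use the nonpositivity of all $\mathbf{u}$-excesses there to bound the sum over the sub-partition by a single term, and invoke Lemma \ref{lemma:rest2} to pick an unfrozen essential $T^\ast$ with $u_{T^\ast}\circ e(T^\ast,x)\leq t_k$. The induction scaffolding and the worry about the two processes producing the same $W_r$'s are unnecessary, since $Y_k$ is defined directly from $X_{k-1}$, $t_k$ and the $W_r$ of the full process, and the ``main obstacle'' you describe dissolves for the same reason.
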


In other words, Theorem \ref{th:u-Huberman} claims that the $\mathbf{u}$-essential coalitions give a characterization set for the $\mathbf{u}$-prenucleolus of $\mathbf{u}$-balanced games.

\begin{proof}
First, notice that $X_k \subseteq Y_k$ holds for all $k$ by definition.

By Proposition \ref{prop:X1=Y1}, we have that $X_1 = Y_1$; therefore, $X_1, Y_1 \subseteq \mathbf{u}\text{-core}(v)$.

Suppose for contradiction that there exists $k > 1$ such that $X_k \nsupseteq Y_k$, which means that there exist $y^\ast \in Y_k$ and $S \in \mathcal{A}^\ast \setminus (\mathcal{E}_v^{\mathbf{u}} \cup (\cup_{r=1}^{k-1} W_r))$ such that $u_S \circ e (S,y^\ast) > t_k$.  


By Lemma \ref{lemma:rest1}  for each $x\in X_1$ there exists $\mathcal{B}_x \in \mathcal{D}_S^{\mathcal{A}^{\ast}} \cap \mathcal{E}_v^{\mathbf{u}}$ such that $u_S \circ e(S, x) \leq \sum_{T \in \mathcal{B}_x} u_T \circ e (T, x)$.


Then

\begin{equation*}
\begin{split}
u_S \circ e (S, y^\ast)  &\leq \sum_{T\in \mathcal{B}_{y^*}} u_T \circ e(T, y^\ast) \ .
\end{split}
\end{equation*}

Moreover, by definition $u_S \circ e(S, x)\leq 0$ for all $S\in \mathcal{A}^{\ast}, x\in X_{k-1}$.  Therefore, for any coalition $T\in \mathcal{B}_{y^*}$ we have that $u_S \circ e(S, y^*)\leq u_T \circ e(T, y^*)$. By Lemma \ref{lemma:rest2}, there exists $T^*\in \mathcal{B}_{y^*}$ such that $T^*\notin \cup _{r=1}^{k-1}W_r$. Therefore, $u_S \circ e(S, y^*)\leq u_{T^*} \circ e(T^*, y^*)\leq t_k$, which is a contradiction.
\end{proof}

In words, Theorem \ref{th:u-Huberman} gives a characterization set for the $\mathbf{u}$-prenucleolus in case of $\mathbf{u}$-balanced games. As a direct corollary of this theorem, we can find a characterization set for the percapita prenucleolus in case of balanced games (notice, that the core and the percapita core coincide), or we can shift the values of the non-trivial coalitions uniformly so that the $\mathbf{u}$-core ($\mathbf{u}$ here is the shift) of the game becomes nonempty, while the $\mathbf{u}$-prenucleolus coincides with the prenucleolus. We discuss these applications in more detail in Section \ref{sec:invariance}.

\section{Two invariance results}\label{sec:invariance}

There are $\mathbf{u}$ functions such that  the $\mathbf{u}$-prenucleolus of a game is the same as its prenucleolus, while the $\mathbf{u}$-core of the game is different from its core. These utility functions can be useful in finding a characterization set for the prenucleolus of non-balanced games. 

In this section, we characterize the classes of $\mathbf{u}$-functions under which the $\mathbf{u}$-prenucleolus and the $\mathbf{u}$-core are the same as the prenucleolus and the core, respectively.

\begin{lemma}\label{lemma:invariance:prenuc}
Given a game $v \in \mathcal{G}^{N,\mathcal{A}}$, and utility functions $\mathbf{u}^1$ and $\mathbf{u}^2$, the $\mathbf{u}^1$-prenucleolus coincides with the $\mathbf{u}^2$-prenucleolus if for every $S, T \in \mathcal{A}^*$ and $x\in I^*(v)$

\begin{equation} \label{eq:lemma_normal}
u^1_S \circ e(S, x) \leq u^1_T \circ e(T, x) 
\end{equation}

\noindent if and only if

\begin{equation} \label{eq:lemma_u}
u^2_S \circ e(S, x) \leq u^2_T \circ e(T, x) .
\end{equation}
\end{lemma}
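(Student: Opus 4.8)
The plan is to show that the two $\mathbf{u}$-prenucleoli are computed by the same lexicographic-center iterations, so they must coincide. The key observation is that the hypothesis says exactly that for every preimputation $x$, the two orderings of the coalitions in $\mathcal{A}^*$ induced by $u^1_S\circ e(S,x)$ and $u^2_S\circ e(S,x)$ are the same (as weak orders, i.e.\ ties are preserved). Consequently the $\mathbf{u}^1$-excess vector $E_{\mathbf{u}^1}(x)$ and the $\mathbf{u}^2$-excess vector $E_{\mathbf{u}^2}(x)$ are ordered in a compatible way: if we list the coalitions $S_1,\dots,S_{|\mathcal{A}^*|}$ in an order that is non-increasing for $u^1$, then the same list is non-increasing for $u^2$.

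First I would make precise the claim that the lexicographic comparison of excess vectors is preserved. Fix $x,y\in I^*(v)$; I want $E_{\mathbf{u}^1}(x)\leq_L E_{\mathbf{u}^1}(y)$ iff $E_{\mathbf{u}^2}(x)\leq_L E_{\mathbf{u}^2}(y)$. This is the technical heart of the argument. I would introduce, for a payoff vector $z$, the induced weak order $\preceq_z$ on $\mathcal{A}^*$ (which by hypothesis does not depend on whether we use $u^1$ or $u^2$), and argue that $E_{\mathbf{u}^i}(x)\leq_L E_{\mathbf{u}^i}(y)$ can be decided by a procedure that only looks at $\preceq_x$, $\preceq_y$, and the \emph{signs} of the pairwise comparisons $u^i_S\circ e(S,x)$ versus $u^i_T\circ e(T,y)$ across the two vectors --- but that last part is the subtle point, since the hypothesis only controls comparisons at a \emph{single} payoff vector, not across $x$ and $y$. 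So the cleaner route is not to compare $x$ and $y$ directly, but to run the lexicographic center algorithm of Section \ref{sec:lexi} and show inductively that it produces the same sets $X_k$ and the same ``fixed'' coalition collections $W_k$ regardless of whether $\mathbf{u}^1$ or $\mathbf{u}^2$ is used; then Theorem \ref{thm:charupren} gives $N^*_{\mathbf{u}^1}(v)=X_{k^*}=N^*_{\mathbf{u}^2}(v)$.

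So the main steps are: (i) Base step. In problem \eqref{LP1}, minimizing $t$ subject to $u^1_S\circ e(S,x)\leq t$ for all $S$ amounts to minimizing $\max_{S}u^1_S\circ e(S,x)$ over $x\in I^*(v)$; by Lemma \ref{lemma:lexi} this optimum exists iff the corresponding $u^2$-optimum exists. More importantly, I claim the optimal set $X_1$ is the same: a preimputation $x$ is optimal for the $\mathbf{u}^1$-problem iff no $x'$ makes the $\preceq_{x'}$-maximal coalition's $u^1$-value strictly smaller than $x$'s $\preceq_x$-maximal $u^1$-value, and --- here I would use \eqref{eq:lemma_normal}$\Leftrightarrow$\eqref{eq:lemma_u} together with the continuity and strict monotonicity of the $u^i$ --- this holds iff the analogous statement holds for $u^2$. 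A convenient way to phrase this: the ``minmax'' optimal set $X_1$ is $\bigcap$ of the lower level sets, and because $\{x: u^i_S\circ e(S,x)\le t_i\}=\{x: e(S,x)\le (u^i_S)^{-1}(t_i)\}$ is a half-space, $X_1$ is determined by which coalitions are ``binding'' at the optimum, which is an order-theoretic datum preserved by the hypothesis; and the collection $W_1$ of coalitions with constant $u^i$-excess on $X_1$ is likewise preserved, since $u^i_S\circ e(S,\cdot)$ is constant on $X_1$ iff $e(S,\cdot)=x(S)\mapsto v(S)-x(S)$ is constant on $X_1$, which does not mention $\mathbf{u}$ at all. (ii) Inductive step. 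Given $X_{k-1}$ and $\cup_{r<k}W_r$ are the same for both utility functions, the same argument applied to \eqref{LPk} (again invoking Lemma \ref{lemma:lexi} restricted to $X=X_{k-1}$, and the order-preservation) shows $X_k$ and $W_k$ coincide. (iii) Conclusion. The stabilized set $X_{k^*}$ is therefore the same, and Theorem \ref{thm:charupren} yields $N^*_{\mathbf{u}^1}(v)=N^*_{\mathbf{u}^2}(v)$.

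The hard part will be step (i)/(ii): turning the hypothesis ``the orders $\preceq_x$ agree for all $x$'' into ``the optimal sets $X_k$ of the two LP families agree.'' The danger is a gap between ``same order at each fixed $x$'' and ``same lexicographic comparison between different $x$'s,'' so I must be careful to phrase the induction purely in terms of the level sets $\{x\in X_{k-1}: e(S,x)\le (u^i_S)^{-1}(t_k)\}$ and to show that the \emph{set of coalitions achieving the optimum} (and hence generating $W_k$ and cutting out $X_k$) is an invariant of the common order structure. The cleanest presentation may actually avoid the algorithm and instead directly verify $E_{\mathbf{u}^1}(x)\le_L E_{\mathbf{u}^1}(y)\iff E_{\mathbf{u}^2}(x)\le_L E_{\mathbf{u}^2}(y)$ by a lexicographic induction, using at each comparison step a small lemma: if $u^1_S\circ e(S,x)\ge u^1_S\circ e(S,y)$ then $u^2_S\circ e(S,x)\ge u^2_S\circ e(S,y)$ (which follows from strict monotonicity of the $u^i$, since both reduce to $e(S,x)\ge e(S,y)$, i.e.\ $x(S)\le y(S)$ --- this comparison is utility-free!). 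That last observation is the real engine: \emph{a comparison of the same coalition's excess at two different payoff vectors is independent of the utility function}, and the hypothesis handles comparisons of \emph{different} coalitions at the \emph{same} payoff vector; together these cover every comparison the lexicographic order needs. I would therefore present the short direct proof built on these two facts.
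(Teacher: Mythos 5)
Your ``short direct proof'' is the same route the paper itself takes (a case analysis on the lexicographic comparison of $E_{\mathbf{u}^1}(x)$ and $E_{\mathbf{u}^1}(y)$), and you have correctly put your finger on the crux: the hypothesis \eqref{eq:lemma_normal}$\Leftrightarrow$\eqref{eq:lemma_u} compares two coalitions at one payoff vector, strict monotonicity compares one coalition at two payoff vectors, but the $k$-th entries of the two sorted vectors are $u_{S_k}\circ e(S_k,x)$ and $u_{T_k}\circ e(T_k,y)$ with, in general, $S_k\neq T_k$ \emph{and} $x\neq y$. The gap is your claim that the two facts ``together cover every comparison the lexicographic order needs'': they do not. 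Chaining through the intermediate value $u_{T_k}\circ e(T_k,x)$ only works when both intermediate comparisons point the same way. Concretely, take two coalitions whose $\mathbf{u}^1$-values at $x$ and $y$ are $(2,0)$ and $(1,3)$, and whose $\mathbf{u}^2$-values are $(10,0)$ and $(1,3)$: both of your facts hold (the within-vector ranking is the same for both utilities at $x$ and at $y$, and each coalition's value moves in the same direction from $x$ to $y$ under both utilities), yet the sorted $\mathbf{u}^1$-vectors satisfy $E_{\mathbf{u}^1}(x)\leq_L E_{\mathbf{u}^1}(y)$ strictly while the sorted $\mathbf{u}^2$-vectors satisfy the strict reverse. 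So the two facts you extract are strictly weaker than the hypothesis and cannot be the engine of the proof. (To be fair, the paper's own Case 2 asserts exactly this cross-vector equivalence ``by \eqref{eq:lemma_normal} and \eqref{eq:lemma_u}'' with no further justification, so it elides the same point.)

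What actually closes the gap is that the hypothesis is imposed at \emph{every} $x\in I^*(v)$. For any pair $S,T$ whose excesses vary independently as $x$ ranges over $I^*(v)$ (every pair except $T=N\setminus S$), quantifying over all $x$ and using continuity forces $u^2_S\circ (u^1_S)^{-1}=u^2_T\circ (u^1_T)^{-1}$; hence, apart from the degenerate case $\mathcal{A}^*=\{S,N\setminus S\}$ (which one checks directly), there is a single strictly increasing $\varphi$ with $u^2_S=\varphi\circ u^1_S$ for all $S$, so $E_{\mathbf{u}^2}(z)=\varphi\bigl(E_{\mathbf{u}^1}(z)\bigr)$ entrywise and every lexicographic comparison is preserved. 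Alternatively, and most cleanly, note that the hypothesis says precisely that the level-set families $\{\mathcal{D}_{\mathbf{u}^1}(\alpha,x)\}_{\alpha}$ and $\{\mathcal{D}_{\mathbf{u}^2}(\alpha,x)\}_{\alpha}$ coincide for every $x$, so Theorem \ref{th:Kohlberg} gives $N^\ast_{\mathbf{u}^1}(v)=N^\ast_{\mathbf{u}^2}(v)$ with no cross-vector comparison at all. Your route via the lexicographic center algorithm can also be completed, but showing the optimal sets $X_k$ agree needs the same missing ingredient.
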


\begin{proof}
Notice, that w.l.o.g. we can assume that $\mathbf{u}^1 = \mathbf{id}$. Then let $\mathbf{u}$ denote $\mathbf{u}^2$.

Let $x, y \in I^*(v)$ be such that $E(x) \leq _L E(y)$, where
\begin{equation*}
E(x) := [e(S_1, x), e(S_2, x), \dots , e(S_{|\mathcal{A}^*|}, x)]
\end{equation*}
\begin{equation*}
E(y) := [e(T_1, y), e(T_2, y), \dots , e(T_{|\mathcal{A}^*|}, y)] .
\end{equation*}

\bigskip

Case 1: $E(x) = E(y)$: By \eqref{eq:lemma_normal} and \eqref{eq:lemma_u} $e(S_n, x) = e(T_n, y)$ for all $1\leq n \leq |\mathcal{A}^*|$ is equivalent with $u_{S_n}\circ e(S_n, x) = u_{T_n} \circ e(T_n, y)$  for all $1\leq n \leq |\mathcal{A}^*|$. Meaning $E_{\mathbf{u}}(x) = E_{\mathbf{u}}(y)$.

Case 2: $E(x) \neq E(y)$: Then, there exists $k$ such that

\begin{equation}\label{kell}
\begin{split}
e(S_n, x) = e(T_n, y) \ \forall n<k, \\
e(S_k, x) < e(T_k, y).
\end{split}
\end{equation}

By \eqref{eq:lemma_normal} and \eqref{eq:lemma_u} we have that \eqref{kell} is equivalent with

\begin{equation}
\begin{split}
u_{S_n} \circ e(S_n, x) = u_{T_n} \circ e(T_n, y) \ \forall n<k, \\
u_{S_k} \circ e(S_k, x) < u_{T_k} \circ e(T_k, y).
\end{split}
\end{equation}

This proves that for each $x, y \in I^*(v)$ $E(x) \leq _L E(y)$ if and only if $E_{\mathbf{u}}(x) \leq _L E_{\mathbf{u}}(y)$. Therefore, $x\in N^*(v)$ if and only if $x\in N_{\mathbf{u}}^*(v)$.
\end{proof}

For example, if $\mathbf{u}$ is such that $u_S = u_T$ for all $S, T \in \mathcal{A}^\ast$, then the $\mathbf{u}$-prenucleolus coincides with the prenucleolus.

\begin{example}
In this example, we show how to find a characterization set for the prenucleolus of non-balanced games using Lemma \ref{lemma:invariance:prenuc}.

Let $v\in \mathcal{G}^{N, \mathcal{A}}$ be a game and let $\mathbf{u}$ be the utility function, such that $u_S \circ e(S, x) := e(S, x) - \varepsilon^*$ for all $S\in \mathcal{A}^*$, where $\varepsilon ^*$ is the optimum of the following LP:

\begin{equation}
\begin{array}{llr}
& \varepsilon \to \min & \\
\textup{s.t.} &  e(S, x) \leq \varepsilon, & \ S \in \mathcal{A}^\ast \\
& x \in I^\ast (v) &  \\
& \varepsilon \in \mathbb{R}. & 
\end{array}
\end{equation}

In other words, $\varepsilon ^*$ is such that the least core is the $\varepsilon^*$-core.

Then, by Lemma \ref{lemma:invariance:prenuc} the $\mathbf{u}$-prenucleolus of the game coincides with the prenucleolus. In addition, the game is $\mathbf{u}$-balanced because the least core is never empty.

Therefore, by Theorem \ref{th:u-Huberman}, in the case of this $\mathbf{u}$ function ($u_S (t) = t - \varepsilon^\ast$, $S \in \mathcal{A}^\ast$), the $\mathbf{u}$-essential coalitions form a characterization set for the prenucleolus of $v$ even if $v$ is not balanced.
\end{example}

\bigskip

In the following we consider the equivalence of the core and the $\mathbf{u}$-core.

\begin{lemma}\label{lemma:invariance:core}
Given a game $v \in \mathcal{G}^{N,\mathcal{A}}$, and utility functions $\mathbf{u}^1$ and $\mathbf{u}^2$, the $\mathbf{u}^1$-core coincides with the $\mathbf{u}^2$-core if for every $S\in \mathcal{A}^*$ and $x\in I^*(v)$

\begin{equation}\label{eq:incariance:core1}
u_S^1 \circ e(S, x) \leq 0
\end{equation}

\noindent if and only if

\begin{equation}\label{eq:incariance:core2}
u_S^2 \circ e(S, x) \leq 0 .
\end{equation}
\end{lemma}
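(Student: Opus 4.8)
The plan is to show that under the hypothesis, the two defining conditions of the $\mathbf{u}^1$-core and the $\mathbf{u}^2$-core cut out exactly the same subset of the preimputation set, so the cores coincide. Recall that by Definition \ref{def:ucore}, for $i = 1, 2$,
\begin{equation*}
\mathbf{u}^i\text{-core}(v) = \{ x \in \mathbb{R}^N \colon x(N) = v(N) \text{ and } u_S^i \circ e(S, x) \leq 0, \ \forall S \in \mathcal{A}^\ast \}.
\end{equation*}
The condition $x(N) = v(N)$ is exactly the condition $x \in I^\ast(v)$, and it is identical in both cases and does not involve $\mathbf{u}$ at all.

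First I would fix an arbitrary $x \in \mathbb{R}^N$ with $x(N) = v(N)$, i.e.\ $x \in I^\ast(v)$; if $x$ is not a preimputation, then it belongs to neither core, so there is nothing to check. Then I would argue the equivalence coalition by coalition: for each $S \in \mathcal{A}^\ast$, the hypothesis \eqref{eq:incariance:core1}$\iff$\eqref{eq:incariance:core2} says precisely that $u_S^1 \circ e(S,x) \leq 0$ holds if and only if $u_S^2 \circ e(S,x) \leq 0$. Hence the statement ``$u_S^1 \circ e(S,x) \leq 0$ for every $S \in \mathcal{A}^\ast$'' is equivalent to the statement ``$u_S^2 \circ e(S,x) \leq 0$ for every $S \in \mathcal{A}^\ast$''. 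Combining with the common constraint $x(N) = v(N)$, we get $x \in \mathbf{u}^1\text{-core}(v) \iff x \in \mathbf{u}^2\text{-core}(v)$, and since $x$ was arbitrary this proves $\mathbf{u}^1\text{-core}(v) = \mathbf{u}^2\text{-core}(v)$.

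Honestly, this lemma is essentially a tautology once the definitions are unwound: the $\mathbf{u}$-core is by definition the set of preimputations satisfying the pointwise constraints $u_S \circ e(S,x) \leq 0$, and the hypothesis is exactly that these constraints are equivalent for $\mathbf{u}^1$ and $\mathbf{u}^2$ on the relevant domain. So there is no real obstacle; the only thing to be careful about is to note that the hypothesis is quantified over $x \in I^\ast(v)$, which is the right domain since both cores are subsets of $I^\ast(v)$, so we never need to evaluate the equivalence outside $I^\ast(v)$. One might additionally remark, as a concrete sufficient condition analogous to the one after Lemma \ref{lemma:invariance:prenuc}, that if $\mathbf{u}^1$ and $\mathbf{u}^2$ satisfy $u^1_S(0) = u^2_S(0) = 0$ (or more generally $u^1_S$ and $u^2_S$ have the same sign everywhere, which holds when $\mathbf{u}^i{}^{-1}_S(0)$ coincide for the two families), then the hypothesis is met, since by strict monotonicity $u_S^i \circ e(S,x) \leq 0 \iff e(S,x) \leq (u_S^i)^{-1}(0)$.
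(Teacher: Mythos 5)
Your proof is correct and follows essentially the same route as the paper's: both simply unwind Definition \ref{def:ucore} and observe that the hypothesis makes the two systems of constraints $u_S^i \circ e(S,x) \leq 0$, $S \in \mathcal{A}^\ast$, equivalent on $I^\ast(v)$, so the two cores are the same set. The paper states this in one line; your version just spells out the pointwise, coalition-by-coalition equivalence more explicitly.
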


\begin{proof}
$\mathbf{u}^1$-core$(v) = \{ x\in I^*(v) \colon u_S^1\circ e(S, x) \leq 0 \ \forall S\in \mathcal{A}^*\}$. Due to the equivalence of \eqref{eq:incariance:core1} and \eqref{eq:incariance:core2} this equals $\{ x\in I^*(v) \colon u_S^2\circ e(S, x) \leq 0 \ \forall S\in \mathcal{A}^*\} = \mathbf{u}^2$-core$(v)$.
\end{proof}

For example, if $\mathbf{u}$ is such that $u_S(0) = 0$ for all $S\in \mathcal{A}^*$, then the $\mathbf{u}$-core coincides with the core; therefore a game is balanced if and only if it is $\mathbf{u}$-balanced.

\begin{example}
In this example, we show how to find a characterization set for the percapita prenucleolus of balanced games.

Let $v\in \mathcal{G}^{N,\mathcal{A}}$ be a game and let $\mathbf{u}$ be the percapita function, that is, $u_S \circ e(S, x) = \frac{e(S, x)}{|S|}$ for all $S\in \mathcal{A}^\ast$.

By Lemma \ref{lemma:invariance:core}, the $\mathbf{u}$-core coincides with the core.

Then, a coalition $S\in \mathcal{A}^\ast$ is $\mathbf{u}$-essential (see Definition \ref{def:u-ess}), if either $\mathcal{D}_S^{\mathcal{A}^\ast} = \emptyset$, (if $\mathcal{A} = \mathcal{P}(N)$, these coalitions are the singletons) or if there exists $x\in \core (v)$ such that

\begin{equation*}
\frac{e(S, x)}{|S|} > \max_{\mathcal{B}\in \mathcal{D}_S^{\mathcal{A}^*}}\sum_{T\in \mathcal{B}}\frac{e(T, x)}{|T|} \,.
\end{equation*} 

By Theorem \ref{th:u-Huberman}, the $\mathbf{u}$-essential (percapita-essential) coalitions form a characterization set for the percapita prenucleolus in case of balanced games.
\end{example}

\section{An example}\label{sec:application}

There are certain classes of games and utility functions for which there are only polynomial many $\mathbf{u}$-essential coalitions in the number of players. For example, consider the class of assignment games with the reciprocal percapita utility function $\mathbf{u}$. Our definition of the the reciprocal percapita utility function $\mathbf{u}$ is that for all $v\in \mathcal{G}^{N}, S\in \mathcal{P}^*(N)$ we have that $u_S\circ e(S, x) = |S|e(S, x)$. The reciprocal percapita utility function can be "interpreted" in a way that the value of a coalition is a public value for the members of the coalition. Therefore, each player's utility is the excess of the coalition; hence, the total utility of the excess of the coalition is the excess of the coalition multiplied by the size of the coalition.

In case of an assignment game, there are sellers ($M'$) and buyers ($M$). Each seller $j\in M'$ has a reservation value of $c_j\geq 0$ and each buyer $i\in M$ values the object of seller $j$ to $h_{i, j}\geq 0$. If a buyer and a seller trade, they make a joint profit of $a_{i, j} = \max \{ 0, h_{i, j} - c_j \}$. These joint profits can be displayed in an assignment matrix $A$:

\begin{equation*}
A = 
\left[
\begin{array}{@{}rrrr@{}}
a_{1, 1} & a_{1, 2} & \dots & a_{1, m'} \\
a_{2, 1} & a_{2, 2} & \dots & a_{2, m} \\
\vdots&\vdots&\ddots&\vdots\\
a_{m, 1} & a_{m, 2} & \dots & a_{m, m'}
\end{array}
\right]
\end{equation*}

A matching $\mu '$ is a subset of $M \times M'$, where each agent appears in at most one pair. Let $\mathcal{M}(M, M')$ be the set of matchings.

An assignment game has the set of players $M \cup M'$ and a characteristic function $w_A$ defined as: for all $S \subseteq M, T \subseteq M'$

\begin{equation*}
w_A (S\cup T) = \max \{ \sum_{(i, j)\in \mu} a_{i, j} \colon \mu \in \mathcal{M}(S, T)\}.
\end{equation*}

The core can be described using only the matchings and the singletons in the following way: 

\begin{equation*}
\begin{split}
\text{core}(w_A) = \Big\{ (x, y) \in \mathbb{R}^M \times \mathbb{R}^{M'} \colon \sum_{i \in M} x_i + \sum_{j \in M'} y_j = w_A(M \cup M'), \\ 
x_i + y_j \geq a_{i, j} \ \forall (i, j )\in M\times M', x_i \geq 0 \ \forall i \in M, y_j \geq 0 \ \forall j \in M' \Big\}.
\end{split}
\end{equation*}

Moreover, the core of an assignment game is non-empty \citep{ShapleyShubik1972}. By Lemma \ref{lemma:invariance:core}, the core of an assignment game coincides with the $\mathbf{u}$-core of the game in case of the reciprocal percapita utility function.

Find the $\mathbf{u}$-essential coalitions in case of assignment games. The singletons are $\mathbf{u}$-essential by definition. The matchings are $\mathbf{u}$-essential, but the other pairs are not $\mathbf{u}$-essential. Indeed, let $i, j \in N$, $\{ i, j \} \notin \mathcal{M}(M, M')$, then for each $x\in \mathbf{u}$-core$(w_A)$ we have that  $2(w_A(\{ i, j \} ) - x_i - x_j) \leq -x_i-x_j$, because $0\leq x_i + x_j$.

Consider a coalition $S\in \mathcal{P}^*(N)$ with cardinality larger than two. Let $x\in \mathbf{u}$-core$(w_A)$ and $\mu^*$ be an optimal matching for $S$. The left-hand side of the inequality in Definition \ref{def:u-ess} is:

\begin{equation*}\label{eq:left}
\begin{split}
u_S\circ e(S, x) = |S|(w_A(S)-x(S))
= |S|\left(\sum_{(i, j)\in \mu^*} (a_{i, j} - x_i - x_j) - \sum_{\substack{ k \in N,\\ (k, \cdot )\notin \mu^*,\\ (\cdot , k) \notin \mu^*}} x_k \right) \, .
\end{split}
\end{equation*}

\noindent Moreover, for $\mathcal{B}^* =  \mu^* \cup \{ \{ k \} \}_{ k \in N, (k, \cdot )\notin \mu^*, (\cdot , k) \notin \mu^*}$ the right-hand side of the inequality in Definition \ref{def:u-ess} is

\begin{equation}\label{eq:right}
\sum_{T\in \mathcal{B}^*} u_T\circ u(T, x) = 2 \left( \sum_{(i, j)\in \mu^*} (a_{i, j} - x_i - x_j) \right) - \sum_{\substack{ k \in N,\\ (k, \cdot )\notin \mu^*,\\ (\cdot , k) \notin \mu^*}} x_k \, .
\end{equation}

\noindent Subtract $2(\sum_{(i, j)\in \mu^*} (a_{i, j} - x_i - x_j)) - |S|\sum_{\substack{ k \in N,\\ (k, \cdot )\notin \mu^*,\\ (\cdot , k) \notin \mu^*}} x_k$ from both sides. Then we get

\begin{equation*}
(|S|-2) \sum_{(i, j) \in \mu^*} a_{i, j}
\end{equation*}

\noindent on the left-hand side, and

\begin{equation*}
(|S|-2)\sum_{(i, j) \in \mu^*}( x_i + x_j ) + (|S| - 1) \sum_{\substack{ k \in N,\\ (k, \cdot )\notin \mu^*,\\ (\cdot , k) \notin \mu^*}} x_k
\end{equation*}

\noindent on the right-hand side. Since $x\in \mathbf{u}$-core$(w_A)$, it holds that $\sum_{(i, j) \in \mu^*}( x_i + x_j ) \geq \sum_{(i, j) \in \mu^*} a_{i, j}$ and $\sum_{\substack{ k \in N,\\ (k, \cdot )\notin \mu^*,\\ (\cdot , k) \notin \mu^*}} x_k \geq 0$. Then, it follows that 

\begin{equation*}
u_S\circ e(S, x) \leq \sum_{T\in \mathcal{B}^*} u_T\circ u(T, x) \, .
\end{equation*}

\noindent It means that there is no $x\in \mathbf{u}$-core$(w_A)$ such that the left-hand side would be strictly larger than the right-hand side, hence $S$ is not $\mathbf{u}$-essential.

In conclusion, only the singletons and the matchings are $\mathbf{u}$-essential in case of assignment games, hence, there are only polynomial many $\mathbf{u}$-essential coalitions in the number of players.

\section{Conclusion}

We have introduced a generalization of the prenucleolus using utility functions, namely the $\mathbf{u}$-prenucleolus. This generalization also generalizes the percapita prenucleolus \citep{Grotte1970, Grotte1972} and the $q$-nucleolus \citep{Solymosi2019}. On the other hand, the $\mathbf{u}$-prenucleolus is a special case of the general prenucleolus \citep{PottersTijs1992, MaschlerPottersTijs1992}.

We have considered TU-games with restricted cooperation. For such games, some of the original properties of the prenucleolus change: for example, the prenucleolus is no longer a single-valued solution. \citet{Katsev2013} gave necessary and sufficient conditions for the prenucleolus to be non-empty and to be single-valued, respectively. We have generalized these results to the $\mathbf{u}$-prenucleolus.

Using the idea of utility functions, we have also introduced generalizations of the core, least core, balanced games and essential coalitions: the $\mathbf{u}$-core, $\mathbf{u}$-least-core, $\mathbf{u}$-balanced games and $\mathbf{u}$-essential coalitions, respectively. We have generalized the  Bondareva--Shapley theorem \citep{Bondareva1963,Shapley1967,Faigle1989} by showing that a game is $\mathbf{u}$-balanced if and only if its $\mathbf{u}$-core is not empty. We have also generalized Huberman's theorem \citep{Huberman1980}; we have shown that $\mathbf{u}$-essential coalitions form a characterization set of the $\mathbf{u}$-prenucleolus in case of $\mathbf{u}$-balanced games.

We have given sufficient conditions on the utility functions for the $\mathbf{u}$-prenucleolus and the $\mathbf{u}$-core to be invariant. Finally, we have discussed a class of games and a utility function, where a game has polynomial many $\mathbf{u}$-essential coalitions.

Future work can include other similar generalizations of the prenucleolus, for example a one where the utility function is not applied to the excesses, but to the coalitional values and/or to the preimputations. Another possible direction of future research is the deeper understanding of the $\mathbf{u}$-prenucleolus by considering dual games.


\end{document}